\PassOptionsToPackage{dvipsnames}{xcolor}

\documentclass[a4paper,11pt,oneside]{article}
\usepackage[utf8]{inputenc}
\usepackage[T1]{fontenc}
\usepackage{t1enc}
\usepackage[german,english]{babel}
\usepackage{amsmath,amsthm,amsfonts,amssymb,amscd}
\usepackage{mathabx}
\usepackage{adjustbox}
\usepackage{float}
\usepackage{enumerate}
\usepackage{pgf,tikz}
\usepackage[noend]{algpseudocode}
\usepackage{graphicx,bm,xcolor}
\usepackage{algorithm}
\usepackage{natbib}
\usepackage{hyperref}
\usepackage{caption}
\usepackage{mathtools}
\usepackage{verbatim}
\usepackage{stmaryrd}
\usepackage{lmodern}
\usepackage{ulem}
\usepackage{authblk}
\usepackage[top=2cm, bottom=2cm, left=2cm, right=2cm]{geometry}

\usepackage[capitalize]{cleveref}

\newcommand\lvvvert{\mathopen{|\mkern-2.4mu|\mkern-2.4mu|}}
\newcommand\rvvvert{\mathclose{|\mkern-2.4mu|\mkern-2.4mu|}}

\usepackage{booktabs}
\usepackage{pgfplots}
\usepackage{siunitx}
\usetikzlibrary{patterns}
\usetikzlibrary{pgfplots.fillbetween}

\newcommand{\e}{\bm{e}}

\newcommand{\n}{\bm{n}}

\newcommand{\etab}{\bm{\eta}}
\newcommand{\xib}{\bm{\xi}}

\newcommand\sumFKi{\sumFi \sum_{K\mathpunct: F \in \cl K}}
\newcommand\sumFi{\sum_{F \in \F_G^i}}

\theoremstyle{plain} 
\newtheorem{theorem}{Theorem}[section]

\newtheorem{lemma}[theorem]{Lemma}
\newtheorem{remark}[theorem]{Remark}
\newtheorem{problem}[theorem]{Problem}
\theoremstyle{definition} %
\newtheorem{assumption}{Assumption}
\theoremstyle{remark} %

\newcommand\Oh{\Omega^\T}
\newcommand\Ofh{\Oh_f}
\newcommand\Osh{\Oh_s}
\newcommand\Oih{\Oh_i}

\newcommand\symgrad[1]{\epsilon(#1)}
\newcommand\iprod[3][]{(#2,#3)_{#1}}
\newcommand\prodOf{\iprod[\Omega_f]}
\newcommand\prodOs{\iprod[\Omega_s]}
\newcommand\prodGi{\iprod[\Gamma^i]}
\newcommand\dpair[3][]{\langle#2,#3\rangle_{#1}}%
\newcommand\pairf{\dpair[\V_f^* \times \V_f]}
\newcommand\pairs{\dpair[\V_s^* \times \V_s]}
\newcommand\pairi{\dpair[H^{-1/2}(\Gamma^i) \times H^{1/2}(\Gamma^i)]}
\newcommand\Norm[2][]{\left\|#2\right\|\ifx#1\empty\else_{#1}\fi}
\newcommand\norm[2][]{\lVert#2\rVert\ifx#1\empty\else_{#1}\fi}

\newcommand\normOf{\norm[\Omega_f]}
\newcommand\normOs{\norm[\Omega_s]}
\newcommand\normOfh{\norm[\Ofh]}
\newcommand\normOsh{\norm[\Osh]}
\newcommand\normOi{\norm[\Omega_i]}
\newcommand\normGi{\norm[\Gamma^i]}
\newcommand\normIf{\norm[\ifluid]}
\newcommand\normIs{\norm[\isolid]}
\newcommand\normiii[1]{\lvvvert#1\rvvvert}
\newcommand\fcdot{\,\cdot\,}

\newcommand\tracenorm{\norm[h,\Gamma^i]}

\newcommand\jump[1]{\llbracket#1\rrbracket}

\newcommand\Jump[1]{g_F^{#1}(\varphi_1, \varphi_2)}
\renewcommand\:{\colon}
\renewcommand\div{\nabla\cdot}
\newcommand\bd{\partial}
\newcommand\cl[2][2mu]{\mkern#1%
  \overline{\mkern-#1 #2\mkern-#1}\mkern#1}
\newcommand\tsb[1]{_{\textup{#1}}}
\newcommand\tsp[1]{^{\textup{#1}}}
\newcommand\set[1]{\{#1\}}
\newcommand\defset[3][\colon]{\bigl\{#2#1#3\bigr\}}
\newcommand\R{\mathbb R}
\newcommand\N{\mathbb N}
\newcommand\A{\mathcal A}
\newcommand\E{\mathcal E}
\newcommand\F{\mathcal F}
\newcommand\I{\mathcal I}
\renewcommand\H{\mathcal H}
\newcommand\T{\mathcal T}
\newcommand\U{\mathcal U}
\newcommand\V{\mathcal V}
\newcommand\X{\mathcal X}
\renewcommand\P{\mathcal P}
\newcommand\interf{{\Omega_i\tsp{int}}}
\newcommand\ifluid{{\Omega_f\tsp{int}}}
\newcommand\isolid{{\Omega_s\tsp{int}}}
\newcommand\diff{\,\textup{d}}
\newcommand\ds{\diff s}
\newcommand\dt{\diff t}
\newcommand\vres{\bm r_v^{h,n}}
\newcommand\ET{\E^\T\!}
\newcommand\expcT{e^{c T}}
\newcommand\Udiff{\delta U^{h,n}}
\newcommand\pres{\zeta^{h,n}}

\newcommand\tr[1]{\textup{tr}(#1)}

\newcommand\intI{I}
\newcommand\clI{\mkern2mu\overline{\mkern-2mu I}}
\newcommand\ghw[3][]{g^{h,w}\ifx#1\empty\else_{#1}\fi(#2,#3)}
\newcommand\wmax{\hat w}
\newcommand\CF{\theta}

\let\0\emptyset

\definecolor{dgreen}{RGB}{0, 181, 21}

\begin{document}

\title{Numerical Analysis of a Cut Finite Element Approach for
  Fully Eulerian Fluid-Structure Interaction with Fixed Interface}
\author[1]{S. Frei}
\author[2]{T. Knoke}
\author[2]{M. C. Steinbach}
\author[2]{A.-K. Wenske}
\author[2]{T. Wick}

\affil[1]{Department of Mathematics \& Statistics, University of Konstanz, Universitätsstr. 10, 78457 Konstanz, Germany}
\affil[2]{Leibniz Universität Hannover, Institut für Angewandte
  Mathematik,\break Welfengarten 1, 30167 Hannover, Germany}

\date{}

\maketitle

\begin{abstract}
  This work develops and analyzes a
  variational-monolithic unfitted finite element formulation
  of a linear fluid-structure interaction problem
  in Eulerian coordinates with a fixed interface.
  The overall discretization is based on
  a backward Euler scheme in time
  and finite elements in space.
  For the spatial discretization we employ a cut finite element method
  on a mesh consisting of quadrilateral elements.
  We use a first-order in time formulation of the elasticity equations,
  inf-sup stable finite elements in the fluid part
  and Nitsche's method to incorporate the coupling conditions.
  Ghost penalty terms guarantee the robustness of the approach
  independently of the way the interface cuts the finite element mesh.
  The main objective is to establish stability and a priori error estimates.
  We prove optimal-order error estimates in space and time
  and substantiate them with numerical tests.
\end{abstract}
\section{Introduction}
This work is devoted to a numerical analysis of a
cut finite element approach for fully Eulerian fluid-structure interaction (FSI). Different methodologies to solve FSI problems have been published over the last decades,
such as the arbitrary Lagrangian Eulerian method~\cite{DoneaSurvey, Hughes1981, FoNo99}, fictitious
domain methods~\cite{Glowinski1994283,Glowinski2001363}, immersed boundary methods~\cite{Pe02},
and fully Eulerian methods~\cite{Du06,CoMaMi08}. Specializations
and further refinements of these techniques exist. Classical monographs
and textbooks on fluid-structure interaction
include
~\cite{FoQuaVe09,richter2017book,FrHoRiWiYa17,BaTaTe13}.
In fully Eulerian modeling,
the two subproblems are both described in Eulerian coordinates,
which specifically requires the rewriting of Langrangian solids.
To set up a common system, a variational-monolithic coupling is employed.
First studies date back to~\cite{Du06,CoMaMi08}
and have since then been investigated and improved in other works,
such as \cite{richterWick2010,SuIiTaTaMa11,
Wi12_fsi_euler,RICHTER2013227,Sun20141,
RATH2023112188,FrKnStWeWi24_ENUMATH,FreiPhD, BurmanFernandezFrei2020}. We also mention our own recent computational work \cite{FrKnStWeWi25}, which lays out the computational framework for this work.
Here, the combination of the fully Eulerian approach for FSI with an unfitted finite element discretization was employed for the first time, yet without any error analysis. A major difference to existing studies in a mixed-coordinate framework~\cite{BURMAN2014FSI, BurmanFernandezGerosa2023} is the cut finite element formulation for the solid parts in (moving) Eulerian coordinates. This includes, for example, ghost penalty terms for the elasticity equations in the solid part, which have to date not been used in the literature within the context of FSI.

The advantage of a fully Eulerian formulation for FSI
lies in its ability to handle very large deformations,
topology changes, and contact problems in a straightforward way,
see, e.g.,
\cite{FrRiWi16_JCP,FrRi17,HechtPironneau2016,BurmanFernandezFrei2020, BurmanFernandezFreiGerosa2022}.
A shortcoming is the necessity of capturing and resolving the interface,
as the spatial finite element mesh is typically fixed, and the interface can move freely.
A fitted finite element framework for fully Eulerian FSI has been constructed in~\cite{FreiRichter2014, FreiPhD, FreiPressure2019}, at the cost of highly anisotropic mesh cells in the interface region. An elegant alternative is given by the cut finite element method~\cite{HANSBO2002elliptic, hansbo2005Nitsche, Burman2015CutFEM}, which is related to other unfitted finite element techniques, e.g., the extended finite element method (XFEM, \cite{MoesDolbowBelytschko99}).
In CutFEM,
interface conditions are imposed by means of Nitsche's method
\cite{Nitsche1971}, see \cite{HANSBO2002elliptic, hansbo2005Nitsche, BURMAN2014FSI}.
Moreover, additional stabilization of cut cells via ghost penalty terms is proposed,
since the condition number of the system matrix
suffers from cells cut into vastly different sizes,
see also \cite{BURMAN2010GhostPenalty, BURMAN2012FictitiousDomainII, burman_hansbo_2014}.
Further unfitted finite element methods include the Finite Cell method~\cite{ParvizianDuesterRank2007}, the Shifted Boundary Method~\cite{MainScovazzi2018} and $\phi$-FEM~\cite{DuprezLozinski2020}.

In the last decade, cut finite elements have been applied extensively to solve many different equations. Among them, we cite~\cite{MassingLarsonLoggRognes2014, HansboLarsonZahedi2014a, GuzmanOlshanskii2018, MassingSchottWall2017, Badia2018, HOANG2017400}
exemplarily for the Stokes equations and~\cite{StickoKreiss2019} for solving the wave equation. A dG cut method has been introduced in~\cite{BastianEngwer2009}; space-time cut finite elements have been developed in~\cite{LehrenfeldReusken2013, BadiaDilipVerdugo2023, HeimannLehrenfeldPreuss2023, HansboLarsonZahedi2016, AnselmannBause}. For a comprehensive ourview, we refer to the recent Acta Numerica article~\cite{BurmanHansboLarsonZahedi2025}. FSI problems have been tackled previously in, e.g., \cite{BURMAN2014FSI, alauzet-et-al-15, kamensky-et-al-15, MassingLarsonetal2015, Liu2021,  BurmanFernandezGerosa2023}. However, in contrast to the present work, cut elements are used in these works only for the fluid part, while the solid equations are solved in Lagrangian coordinates on a fitted mesh, which is
glued to the unfitted fluid mesh.
Moreover, the only available convergence analysis for FSI in a CutFEM framework
is---to our knowledge---given in~\cite{BURMAN2014FSI, BurmanFernandezGerosa2023}.

The main objective (and thus the key difference to~\cite{FrKnStWeWi25}) of the current work is a rigorous numerical analysis of a CutFEM ghost penalty monolithic fully Eulerian fluid-structure interaction problem. In this work, we concentrate on a fixed interface. A convergence analysis for Eulerian FSI on moving interfaces is out of scope of this article and has to our knowledge not yet been established in literature. First steps for parabolic and Stokes equations on moving domains are given in~\cite{Lehrenfeld2018, FreiSingh2024, vonWahlLehrenfeldRichter2022, burman2022eulerian, NeilanOlshanskii2024}. {For the algorithmic implementation of these methods within the FSI context, we refer to~\cite{FrKnStWeWi25}.}

As a first major step in our analysis we will derive
stability estimates. In contrast to previous works on CutFEM, we analyze a discretization on quadrilateral meshes using $Q_r$ finite elements.
Thus, the first step consists in the extension of known coercivity results for arbitrary cut cells using ghost penalty terms to $Q_r$ finite elements.

Moreover, in contrast to~\cite{BURMAN2014FSI, BurmanFernandezGerosa2023}, the estimates are complicated by the fact that we use inf-sup stable finite elements instead of equal-order finite elements with pressure stabilization. For this reason, the derivation of $L^2({H^1})$-stability estimates for the pressure will be necessary. Here, we follow~\cite{GuzmanOlshanskii2018, NeilanOlshanskii2024} and assume the inverse parabolic CFL condition, $\smash[b]{h^2 \lesssim k}$, where $h$ is the spatial discretization parameter and $k$ the temporal discretization term.

Having the stability estimates at hand, we establish consistency and interpolation estimates and prove the final optimal-order a priori error estimates in an energy norm including an optimal $L^2({H^1})$-estimate of the pressure error.
 Afterwards, we present numerical results to investigate the optimality of the proven error estimates under mesh refinement. The computational descriptions are kept short as they have been discussed extensively in our prior work~\cite{FrKnStWeWi25}.

The outline of this paper is as follows. In \cref{sec_notation_model},
the notation and the strong and weak formulations are introduced.
Next, in \cref{sec_disc}, the discretization using an unfitted
finite element method with ghost penalty stabilization is provided.
In the main \cref{sec_num_ana,sec_apriori},
stability and a priori error estimates are established.
Lastly, in \cref{sec_tests},
numerical experiments are conducted in order to substantiate our theory.

\section{Notation and model}
\label{sec_notation_model}

Let $\Omega \subset \R^2$ be a bounded domain,
partitioned into fluid and solid domains
$\Omega_f$ and $\Omega_s$ with Lipschitz boundaries
such that $\cl\Omega = \cl\Omega_f \cup \cl\Omega_s$
with $\Omega_f \cap \Omega_s = \0$.
Denote by $\Gamma^i := \cl\Omega_f \cap \cl\Omega_s$
the interface between fluid and solid
and by $\Gamma_f^D := \bd\Omega_f \cap \bd\Omega$
and $\Gamma_s^D \subset \bd\Omega_s \cap \bd\Omega$
Dirichlet boundaries for the fluid and solid subdomains,
respectively {(see \cref{fig:domains})}.
Let $\intI = (0, T)$ be a given time interval.
To simplify the numerical analysis,
assume that the domain partitioning is constant (fixed) in time.
In particular, no additional convective terms enter
the Eulerian formulation of the solid equations.
The strong formulation of the coupled system then reads as follows.

\begin{problem}[Strong formulation]%
  \label{strong_form}%
  \newlength\lone\newlength\ltwo\settowidth\lone{$\rho_f f_f$}%
  \settowidth\ltwo{$\textup{on }
    (\bd\Omega_s \setminus (\Gamma_s^D \cup \Gamma^i)) \times \intI$,}%
  \newcommand\rhs[3]{\hbox to\lone{$#1$\hfil}\,
    \hbox to\ltwo{\textup{#2} $#3$\hfil}}%
  \newcommand\br{\\[-3pt]}%
Consider given force fields
$f_f\: \cl\Omega_f \times \clI \to \R^2$ and
$f_s\: \cl\Omega_s \times \clI \to \R^2$, 
the fluid velocity $v_f\: \cl\Omega_f \times \clI \to \R^2$,
the pressure $p\: \cl\Omega_f \times \clI \to \R$,
the solid velocity $v_s\: \cl\Omega_s \times \clI \to \R^2$
and the displacement $u\: \cl\Omega_s \times \clI \to \R^2$
with initial values
$\smash[b]{v_f^0}\: \Omega_f \to \R^2$,
$v_s^0\: \Omega_s \to \R^2$ and
$u^0\: \Omega_s \to \R^2$.
Then, find $U := (v_f, p, v_s, u)$,
such that
  \begin{equation*}
    \begin{alignedat}{3}
      \rho_f \partial_t v_f - \div \sigma_f &= \rho_f f_f, \,&\
      \div v_f &= 0 \textup{\kern7pt in } \Omega_f \times \intI, &
      v_f &= {0} \textup{ on } \Gamma_f^D \times \intI,
      \\
      \rho_s \partial_t v_s - \div \sigma_s &= \rho_s f_s, &
      \partial_t u &= v_s \textup{ in } \Omega_s \times \intI, &
      u &= {0} \textup{ on } \Gamma_s^D \times \intI,
      \\
      \sigma_s \n &= 0 \textup{\kern6pt on }
      (\bd\Omega_s \setminus (\Gamma_s^D\cup \Gamma^i)) \times \intI,
      \mkern-200mu && &
      v_f &= v_s, \ \sigma_f \n = \sigma_s \n
      \textup{ on } \Gamma^i \times \intI,
      \\
      u &= u^0, &
      v_s &= v_s^0 \textup{ in } \Omega_s \times \set{0}, &\quad
      v_f &= v_f^0 \textup{ in } \Omega_f \times \set{0}.
    \end{alignedat}
  \end{equation*}
Let $\symgrad w := \frac12 (\nabla w + \nabla w^\top)$
denote the symmetrized gradient of a function $w$.
Define the fluid stress as
$\sigma_f \equiv \sigma_f(v_f, p) := 2 \rho_f \nu_f \symgrad{v_f} - p \mathbb I$
and, using a linearized Saint Venant-Kirchhoff model,
the solid stress by
$\sigma_s \equiv \sigma_s(u) :=
2 \mu_s \symgrad{u} + \lambda_s \tr{\symgrad{u}} \mathbb I$
with the Lamé parameters $\mu_s$ and $\lambda_s$,
the densities $\rho_f$ and $\rho_s$ (constant in space and time),
the fluid viscosity $\nu_s$,
the surface normal $\n$,
and the identity matrix $\mathbb I \in \R^{2 \times 2}$.
\end{problem}
For the weak formulation, define function spaces
$\V_f := H^1_0(\Omega_f; \Gamma_f^D)$, $\V_s := H^1(\Omega_s)$,
$\U := H^1_0(\Omega_s; \Gamma_s^D)$, $\P := L^2(\Omega_f)$
and let
$\X := \V_f \times \P \times \V_s \times \U$ be their product space.
For function spaces $Y$ and $Z$
over $\Omega_i$ for $i \in \set{f, s}$, such that
$Y \subset {L^2(\Omega_{i})} \subset Y^*$ form a Gelfand tripel
(with $Y^*$ denoting the topological dual space of $Y$), let
$W(\intI, Y, Z) :=
\defset{v \in L^2(\intI, Y)}{\partial_t v \in L^2(\intI, Z)}$
with the Bochner spaces $L^2(\intI, \fcdot)$.
We combine the temporal and spatial function spaces into the space
$$W(\intI, \X) :=
W(\intI,\V_f,\V_f^*) \times L^2(\intI,\P) \times
W(\intI, {L^2(\Omega_s)}, \V_s^*) \times
W(\intI, \U, {L^2(\Omega_s)}).$$
This choice of function spaces yields a continuous embedding
of the velocities $v_f, v_s$ and the deformation~$u$ into spaces
$C^0(\clI, L^2(\Omega_i))$ \cite{Dautray1992Vol5}.
Hence, the prescription of initial values $v_f^0, v_s^0, u^0$ is well-defined.
In the following, we state the weak formulation in a slightly more general form,
which will be convenient for the numerical analysis.
To this end, we introduce a functional $F \in \X^*$
for the right-hand side of the weak formulation, defined as
\begin{align*}
  F(\phi_f, \xi, \phi_s, 0) := F_f(\phi_f) + F_p(\xi) + F_s(\phi_s).
\end{align*}
Hereby, \cref{strong_form} is
covered as the case where
$F_f(\phi_f) = \rho_f \prodOf{f_f}{\phi_f}$, $F_p(\xi) = 0$ and
$F_s(\phi_s) = \rho_s \prodOs{f_s}{\phi_s}$.
Multiplication with
test functions and integration by parts
then translates \cref{strong_form} into the following weak formulation, where $\dpair[Y^* \times Y]{\fcdot}{\fcdot}$
denotes the duality pairing of $Y^*$ and $Y$.

\begin{problem}[Weak formulation]%
  \label{weak_form}%
  Find $(v_f, p, v_s, u)$ {$\in$
  $W(\intI,\X)$}
  for given initial values $v_f^0,\,v_s^0,\,u^0$
  such that $v_f = v_s$ almost everywhere on $\Gamma^i$,
  $\partial_t u = v_s$ almost everywhere in $\Omega_s$, and for all
  $(\phi_f, \xi, \phi_s, 0)$ in $\X$
  and almost all $t \in \intI$:
  \begin{multline*}
    \rho_f \pairf{\partial_t v_f}{\phi_f} +
    \prodOf{\sigma_f}{\nabla\phi_f} + \prodOf{\div v_f}{\xi}
    + \rho_s \pairs{\partial_t v_s}{\phi_s} \\[-2pt]
    + \prodOs{\sigma_s}{\nabla\phi_s}
      - \pairi{\sigma_f \n_f}{\phi_f - \phi_s}
      = F(\phi_f, \phi_s, \xi,0).
  \end{multline*}
\end{problem}

\section{Spatial and temporal discretization}
\label{sec_disc}

\begin{figure}[t]
\begin{minipage}[c]{.3\linewidth}
\centering
 \begin{tikzpicture}[font = \scriptsize, scale = 1.5]
  \draw[color=blue, fill=blue!10, thick] (-1, -1) rectangle (1, 1);
  \draw[color=red, fill=red!10, thick] (0, 0) circle (0.75);
  \draw (-0.7, 0.7) node {\scriptsize \textcolor{blue}{$\Omega_f$}};
  \draw (0, 0) node {\scriptsize \textcolor{red}{$\Omega_s$}};
  \draw[anchor=south] (0, -0.7) node {\scriptsize \textcolor{red}{$\Gamma^i=\bd\Omega_s$}};
  \draw[anchor=south] (0, -1.07) node {\scriptsize \textcolor{blue}{$\bd\Omega=\bd\Omega_f\setminus\Gamma^i$}};
 \end{tikzpicture}
\end{minipage}
\begin{minipage}[c]{.3\linewidth}
\centering
 \begin{tikzpicture}[font = \scriptsize, scale = 1.5]
    \draw[color=blue!10, fill=blue!10] (-1, -1) rectangle (1, 1);
   \fill[pattern=north east lines, pattern color=Fuchsia] (-0.6, -0.8) rectangle (0.6, 0.8);
    \fill[pattern=north east lines, pattern color=Fuchsia] (-0.8, -0.6) rectangle (0.8, 0.6);
    \draw[color=red!10, fill=red!10] (-0.4, -0.6) rectangle (0.4, 0.6);
    \draw[color=red!10, fill=red!10] (-0.6, -0.4) rectangle (0.6, 0.4);
    \draw[step=0.2,black!60,thin] (-1, -1) grid (1, 1);
    \draw[color=black, thick] (-1, -1) rectangle (1, 1);
    \draw[thick] (0, 0) circle (0.75);
    \draw[color=blue!10, fill=blue!10] (-0.8, 0.8) circle (0.13);
    \draw (-0.8, 0.8) node {\scriptsize \textcolor{blue}{$\Oh_f$}};
    \draw[color=blue!10, fill=blue!10] (0.8, -0.8) circle (0.13);
    \draw (0.8, -0.8) node {\scriptsize \textcolor{Fuchsia}{$G_h$}};
    \draw[color=red!10, fill=red!10] (0, -0.4) circle (0.13);
    \draw (0, -0.4) node {\scriptsize \textcolor{PineGreen}{$\F_G^f$}};
    \draw[color=red!10, fill=red!10] (0, 0) circle (0.16);
    \draw (0, 0) node {\scriptsize \textcolor{red}{$\isolid$}};

    \draw[color=PineGreen, very thick] (-0.6, -0.8) -- (0.6, -0.8);
    \draw[color=PineGreen, very thick] (-0.6, 0.8) -- (0.6, 0.8);
    \draw[color=PineGreen, very thick] (-0.8, -0.6) -- (-0.8, 0.6);
    \draw[color=PineGreen, very thick] (0.8, -0.6) -- (0.8, 0.6);
    \foreach \x in {-0.6,-0.4,-0.2,...,0.6}
    {
        \draw[color=PineGreen, very thick] (\x, 0.6) -- (\x, 0.8);
        \draw[color=PineGreen, very thick] (\x, -0.6) -- (\x, -0.8);
        \draw[color=PineGreen, very thick] (0.6, \x) -- (0.8, \x);
        \draw[color=PineGreen, very thick] (-0.6, \x) -- (-0.8, \x);
    }
    \foreach \x in {-1,1}
        \foreach \y in {-1,1}
        {
            \draw[color=PineGreen, very thick] (\x*0.6, \y*0.6) -- (\x*0.4, \y*0.6);
            \draw[color=PineGreen, very thick] (\x*0.6, \y*0.6) -- (\x*0.6, \y*0.4);
        }
 \end{tikzpicture}
\end{minipage}
\begin{minipage}[c]{.3\linewidth}
\centering
 \begin{tikzpicture}[font = \scriptsize, scale = 1.5]
    \draw[color=black, fill=blue!10, thick] (-1, -1) rectangle (1, 1);
    \draw[color=red!10, fill=red!10] (-0.6, -0.8) rectangle (0.6, 0.8);
    \draw[color=red!10, fill=red!10] (-0.8, -0.6) rectangle (0.8, 0.6);
  \fill[pattern=north east lines, pattern color=Fuchsia] (-0.6, -0.8) rectangle (0.6, 0.8);
    \fill[pattern=north east lines, pattern color=Fuchsia] (-0.8, -0.6) rectangle (0.8, 0.6);
    \draw[color=red!10, fill=red!10] (-0.4, -0.6) rectangle (0.4, 0.6);
    \draw[color=red!10, fill=red!10] (-0.6, -0.4) rectangle (0.6, 0.4);
    \draw[step=0.2,black!60,thin] (-1, -1) grid (1, 1);
    \draw[thick] (0, 0) circle (0.75);
    \draw[color=red!10, fill=red!10] (0, 0) circle (0.13);
    \draw (0, 0) node {\scriptsize \textcolor{red}{$\Oh_s$}};
    \draw[color=blue!10, fill=blue!10] (0.8, -0.8) circle (0.13);
    \draw (0.8, -0.8) node {\scriptsize \textcolor{Fuchsia}{$G_h$}};
    \draw[color=red!10, fill=red!10] (0, -0.4) circle (0.13);
    \draw (0, -0.4) node {\scriptsize \textcolor{Bittersweet}{$\F_G^s$}};
    \draw[color=blue!10, fill=blue!10] (-0.8, 0.8) circle (0.16);
    \draw (-0.8, 0.8) node {\scriptsize \textcolor{blue}{$\ifluid$}};
    \draw[color=Bittersweet, very thick] (-0.6, -0.6) -- (0.6, -0.6);
    \draw[color=Bittersweet, very thick] (-0.6, 0.6) -- (0.6, 0.6);
    \draw[color=Bittersweet, very thick] (-0.6, -0.6) -- (-0.6, 0.6);
    \draw[color=Bittersweet, very thick] (0.6, -0.6) -- (0.6, 0.6);
    \foreach \x in {-0.4,-0.2,...,0.4}
    {
        \draw[color=Bittersweet, very thick] (\x, 0.6) -- (\x, 0.8);
        \draw[color=Bittersweet, very thick] (\x, -0.6) -- (\x, -0.8);
        \draw[color=Bittersweet, very thick] (0.6, \x) -- (0.8, \x);
        \draw[color=Bittersweet, very thick] (-0.6, \x) -- (-0.8, \x);
    }
    \foreach \x in {-1,1}
        \foreach \y in {-1,1}
        {
            \draw[color=Bittersweet, very thick] (\x*0.4, \y*0.4) -- (\x*0.4, \y*0.6);
            \draw[color=Bittersweet, very thick] (\x*0.4, \y*0.4) -- (\x*0.6, \y*0.4);
        }
 \end{tikzpicture}
\end{minipage}
\caption{{Example of a domain
  partitioned into subdomains $\Omega_f$ and $\Omega_s$ (left)
  and a visualization of the corresponding computational domains
  with the sets of ghost penalty faces for the fluid ($\F_G^f$, middle)
  and the solid domain ($\F_G^s$, right).}}
\label{fig:domains}
\end{figure}
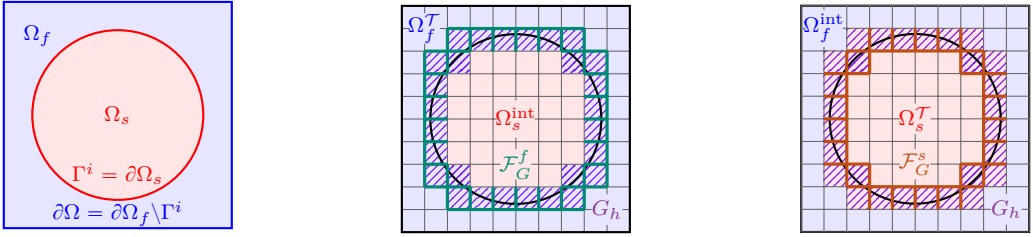

In this section, we introduce the numerical method
by describing spatial and temporal discretizations.

\paragraph{Spatial discretization}

For the spatial discretization of the domain $\Omega$
we consider a family of quasi-uniform triangulations
$\set{\T^h}$ into quadrilaterals
that are fitted to the boundary $\bd\Omega$,
but not to the interface $\Gamma^i$.
The mesh size parameter is defined by $h = \max_{K\in\T^h} h_K$,
where $h_K$ denotes the diameter of the cell $K$.
{We define overlapping fluid and solid subtriangulations
$\T_f^h$ respectively $\T_s^h$ and denote the computational domains
spanned by them by $\Omega_f^\T$ respectively $\Omega_s^\T$:}
\begin{equation*}
  \T_i^h := \defset{K \in \T^h}{K \cap \Omega_i \ne \0}, \quad
  \Omega_i^\T := \bigcup\nolimits_{K \in \T_i^h} \cl K, \quad
  i \in \set{f, s}.
\end{equation*}
Each computational domain {$\Omega_i^\T$} contains its respective physical domain {$\Omega_i$}
and the portions of cells
cut by the interface that lie inside the other physical domain
{(see \cref{fig:domains})}.
Let $G^h = \Ofh \cap \Osh$ denote
the interface zone and $\F_h$ the set of all faces of~$\T^h$.

For the fluid, we use Taylor--Hood finite elements of order $m_f \ge 2$,
i.e., continuous $Q_{m_f}$ elements for the fluid velocity
and continuous $Q_{m_f-1}$ elements for the pressure.
For the solid variables $u$ and $v_s$,
we use equal-order $Q_{m_s}$ elements of order $m_s\ge 1$.

For each element $K \in \T_i^h$, there exists a bilinear bijective map
$\xi_K\: \hat{K} \to K$ from the unit square $\hat{K} = (0,1)^2$.
We introduce the finite element spaces
\begin{align}
V_i^{h, r} := \defset{\phi \in C(\cl{\Oh_i})}
{(\phi|_K \circ \xi_K) \in Q_r(\hat{K})\, \, \forall K \in \T_i^h},
\quad i \in \set{f, s},\label{FEspaces}
\end{align}
and set $V_i^D := H_0^1(\Omega_i; \Gamma_i^D)$ to define
\begin{align*}
  \V_f^h &:= \left(V_f^{h, m_f} \cap V_f^D\right)^2,\,
  \P^h &:= V_f^{h, m_f-1}, \,
  \V_s^h &:= \left(V_s^{h, m_s}\right)^2, \,
  \U^h &:= \left(V_s^{h, m_s} \cap V_s^D\right)^2. 
\end{align*}
{%
The product space is denoted by
$\X^h := \V_f^h \times \P^h \times \V_s^h \times \U^h$.
}

To extend {the stability} of the bilinear form to the computational domains
$\Ofh$, $\Osh$,  we use ghost penalty terms.
In this work, we focus on the classical ghost penalty method
based on penalizing jumps over derivatives.
Other stabilization techniques are possible, see e.g.,
\cite{Lehrenfeld2018,BURMAN2010GhostPenalty}.
Let $\F_G^f\subset \F_h$ denote the set of element faces
$F = \cl K_1 \cap \cl K_2$ of the triangulation
$K_1, K_2\in \T_f^h$
that do not lie on the boundary $\bd\Omega$,
such that at least one of the cells $K_j$ is intersected by the interface:
$K_j \cap \Gamma^i \ne \0$ for $j \in \set{1, 2}$.
Analogously, we define $\F_G^s$
as the set of corresponding faces of the triangulation $\T_s^h$
{(see \cref{fig:domains})}.
For a cell $K$ cut by the interface we denote by $K_f$ and $K_s$
the part of the cell inside the respective subdomain.
For neighboring cells $K,\,K'$ with common face $F = \cl K \cap \cl K'$,
let
$\jump{{\partial_{\n}^i} \varphi} :=
{\partial_{\n}^i} \varphi|_K - {\partial_{\n}^i} \varphi|_{K'}$
denote the jump in the $i$-th partial derivatives of a function~$\varphi$.
Using
$\Jump{i} := \iprod[F]
{\jump{{\partial_{\n}^i} \varphi_1}}
{\jump{{\partial_{\n}^i} \varphi_2}}$,
the ghost penalty functions (with positive parameters
$\gamma_{v_f}$, $\gamma_p$, $\gamma_{v_s}$, $\gamma_u$)
are defined as follows:
\newcommand\sumFf{\sum_{F \in \F_G^f}}
\newcommand\sumFs{\sum_{F \in \F_G^s}}
\ifcase0
\newcommand\sumFKf{\sumFf {w_F^f}}
\newcommand\sumFKs{\sumFs {w_F^s}}
\or
\newcommand\sumFKf{\sumFf \sum_{K\mathpunct: F \in \cl K} w(\kappa_f)}
\newcommand\sumFKs{\sumFs \sum_{K\mathpunct: F \in \cl K} w(\kappa_s)}
\fi
\begin{align}\begin{split}\label{eq:ghost}
	\ghw[v_f]{\varphi_1}{\varphi_2}
	&:= \gamma_{v_f} \sumFKf
	\sum_{l=1}^{m_f} \frac{h^{2l-1}}{({l-1})!^2} \Jump{l},\\
	\ghw[p]{\varphi_1}{\varphi_2}
	&:= \gamma_p \sumFKf
	\sum_{l=1}^{m_f-1} \frac{h^{2l+1}}{(l!)^2} \Jump{l}, \\
	\ghw[v_s]{\varphi_1}{\varphi_2}
	&:= \gamma_{v_s} \sumFKs \sum_{l=1}^{m_s} \frac{h^{2l+1}}{(l!)^2} \Jump{l}, \\
	\ghw[u]{\varphi_1}{\varphi_2}
	&:= \gamma_u \sumFKs \sum_{l=1}^{m_s} \frac{h^{2l-1}}{{(l-1)}!^2} \Jump{l}.
	 \end{split}
\end{align}
Here, we set {$w_F^i := \sum_{K\mathpunct: F \subset \cl K} w(\kappa_i)$
for $i \in \set{f, s}$, using} an additional weight function
$w\: [0,1] \to [\frac12 \wmax^{-1},\frac12 \wmax]$,
$\kappa \mapsto \frac12 \wmax^{1 - 2 \kappa}$, with $\wmax \ge 1$,
which scales the ghost penalties depending on the fraction of the cell inside the respective domain $\Omega_i$,
$i \in \set{f, s}$, denoted as $\kappa_i := \text{meas}(K_i) / \text{meas}(K)$.
Thus we penalize ``bad cuts'' more severely
while ``good cuts'' (where a sufficiently large fraction of the cell lies inside)
are penalized less severely.
Moreover,
the conventional ghost penalty terms are recovered as the special case
where $\wmax = 1$, hence $w \equiv 0.5$ {and $w_F^i = 1$}.

  For the FSI coupling, we make use of Nitsche's method,
  see, e.g., \cite{BURMAN2014FSI}.
  Let $\Phi^h := (\phi_f^h, \xi^h, \phi_s^h, \psi^h)$ to
  introduce the following notation for the interface terms,
  the fluid and solid bulk terms as well
  as the ghost penalty stabilization:
  \begin{align*}
    j^h(U^h,\Phi^h)
    &:= h^{-1} \rho_f\nu_f\gamma_N \prodGi{v_f^h-v_s^h}{\phi_f^h-\phi_s^h} \\[-2pt]
    &\qquad- \prodGi{\sigma_f^h\n_f}{\phi_f^h-\phi_s^h}
    - \prodGi{v_f^h-v_s^h}{\sigma_f^h(\phi_f^h,-\xi^h)\n_f}, \mkern-150mu \\
    a_f^h(U^h,\Phi^h)
    &:= 
    \prodOf{\sigma_f^h}{\nabla\phi_f^h} + \prodOf{\div v_f^h}{\xi^h},
    &a_s^h(U^h,\Phi^h)
    &:= 
     \prodOs{\sigma_s^h}{\nabla\phi_s^h}, \\
    S_f^h(U^h,\Phi^h)
    &:= 2\rho_f\nu_f \ghw[v_f]{v_f^h}{\phi_f^h} + \ghw[p]{p^h}{\xi^h},
    &S_s^h(U^h,\Phi^h)
    &:= 
      2\mu_s \ghw[u]{u^h}{\phi_s^h}.
  \end{align*}
  Combine the terms to obtain the following
  bilinear forms and right-hand side function:
  \begin{align*}
    M^h(U^h, \Phi^h)
    &:= \rho_f \prodOf{v_f^h}{\phi_f^h}
     + \rho_s\prodOs{v_s^h}{\phi_s^h}
     + \rho_s \ghw[v_s]{v_s^h}{\phi_s^h}, \\
    A^h(U^h, \Phi^h)
    &:= (a_f^h + a_s^h + j^h)(U^h, \Phi^h), \quad
    S^h(U^h, \Phi^h)
    := (S_f^h + S_s^h)(U^h, \Phi^h), \\
   F^h(\Phi^h) &:= F_f(\phi_f^h) + F_p(\xi^h) + F_s(\phi_s^h).
  \end{align*}

\paragraph{Temporal discretization}

For the discretization in time, we apply the backward Euler method using (for simplicity)
a uniform grid $\set{t_n}_{n=0}^N$ with $t_n := n k$, $k := T / N$.
We define the right-hand side $F^n(\Phi^h) := F^h(t_n)(\Phi^h)$,
{where $F^h(\fcdot)$ depends implicitly on time
and $F^h(t_n)(\fcdot)$ restricts it to time $t_n$.}
We use $\delta U^{h,n} := U^{h,n} - U^{h,n-1}$
(and similarly $\delta u^{h,n}$ etc.)\
to denote differences between two time steps. With residuals
$\vres := v_s^{h,n} - k^{-1} \delta u^{h,n}$, the 
fully discretized problem is then given as follows.

\begin{problem}[Fully discrete formulation]%
  \label{prob:fullydisc}%
  Given initial values $(v_f^{0}, v_s^{0}, u^0)$,
  find for $n = 1, \dots, N$
   states
  $U^{h,n} := (v_f^{h,n}, p^{h,n}, v_s^{h,n}, u^{h,n})$
  {in}
  ${\X^h}$, such that
  $\vres = 0$ and for all
  $\Phi^h = (\phi_f^h, \xi^h, \phi_s^h, \psi^h)$ in {$\X^h$}:
  \begin{align}
  \begin{split}\label{eq:fullydisc}
    M^h(\Udiff, \Phi^h)
    + k A^h(U^{h,n},\Phi^h) + k S^h(U^{h,n},\Phi^h)
    &= kF^n(\Phi^h).
    \end{split}
  \end{align}
  For $n = 0$, we set $v_f^{h,0} := v_f^0$, $u^{h,0} := u^0$
  and $v_s^{h,0} := E v_s^0$,
  where $E$ denotes a suitable extension operator (see \cref{sec_apriori}).
  By defining
\begin{align}
  \label{eq:definition:Akh}
    \A^{kh}(U^{h,n}, U^{h,n-1}; \Phi^h)
    &:=M^h(\Udiff, \Phi^h) + k A^h(U^{h,n}, \Phi^h) + k S^h(U^{h,n}, \Phi^h),
  \end{align}
  we can abbreviate \eqref{eq:fullydisc} as
  $\A^{kh}(U^{h,n}, U^{h,n-1}; \Phi^h) = k F^n(\Phi^h)$.
\end{problem}

\begin{remark}
  Even though $\psi^h$ (the fourth component of the test function $\Phi^h$)
  does not appear in the fully discrete formulation,
  we keep it in the definition,
  as it will simplify our notation in the following.
\end{remark}

\section{Stability analysis}
\label{sec_num_ana}

In this section, we derive a stability estimate for the fully discrete scheme,
which will be the basis for the a priori error estimate in \cref{sec_apriori}.
{Our derivations are divided into three subsections.
First, we show that the ghost penalty terms
serve to extend {stability} from the physical to the computational domains. While this is well-known for triangular meshes, we prove here a generalization of this result to quadrilateral meshes.
Then, a stability estimate in an energy norm is shown.
This is followed by a specific stability estimate for the pressure.}
Throughout the analysis, we will assume that the interface $\Gamma^i$
as well as the subdomains $\Omega_f$ and $\Omega_s$ are sufficiently smooth,
and that the discretization parameters $h$ and $k$ are sufficiently small.
We start by showing that the ghost penalty terms defined in~\eqref{eq:ghost}
serve to extend {stability} from the physical to the computational domains.
{To this purpose,} we need the following assumption
(see also~\cite{Lehrenfeld2018}).

\begin{assumption}%
  \label{FKK}%
  For every $K \subseteq G^h$ and $i \in \set{f, s}$ there exists
  $K_i \subseteq \Omega_i \setminus G_h$ 
  that is reached from $K$
  by crossing a finite number $M \le N_\F$ of faces
  in $\smash[b]{\Oih}$,
  i.e., there exists a path $(K_1, \dots, K_M)$
  with {$K_l\subset \Omega_i^{{\cal T}}$} and $\cl{K}_l \cap \cl{K}_{l+1} \in \F_G^i$ for $l=1,\ldots,M-1$,
  $K_1 = K$ and $K_M = K_i$.
  Denoting by $N_K$ how often $K \in \T^h$
  is used as final element
  $K_i$ among all $K' \in G^h$,
  we assume further that all frequencies $N_K$
  and the maximum number of faces $N _\F > 0$
  are bounded independently of the mesh size $h$.
\end{assumption}
Assumption~\ref{FKK} is reasonable if $\Gamma^i$ is smooth
and the mesh $\T^h$ is sufficiently fine.
For a detailed discussion, we refer to~\cite[Rem.~5.2]{Lehrenfeld2018}.
We will also use the following trace and inverse inequalities, see e.g.,
\cite[Lem.~2, {Lem.~3}]{BURMAN2012FictitiousDomainII}.
\begin{lemma}%
  \label{traceineq}%
  Let $K \in \T^h$ and $v \in H^1(K)$.
  Then there exist
  $C_T, C\tsb{TI} > 0$ such~that
  $$
  \norm[\bd{K}]{v} \le
  C_T \bigl( h^{-\frac12} \norm[K]{v} + h^{\frac12} \norm[K]{\nabla v} \bigr),
  \quad
  \norm[K \cap \Gamma^i]{v} \le
  C\tsb{TI} \bigl( h^{-\frac12} \norm[K]{v} + h^{\frac12} \norm[K]{\nabla v} \bigr)
  .
  $$
  {For a finite element function $v_h \in {\cal V}_i^{h,r}$,
    $i \in \set{f,s}$, it holds with a constant $C\tsb{inv}$
  $$
  \norm[K \cap \Gamma^i]{v_h} \le
  C\tsb{inv} \bigl( h^{-\frac12} \norm[K]{v_h} \bigr).
  $$  }
\end{lemma}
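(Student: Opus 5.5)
We prove the two continuous trace inequalities by reduction to the reference cell and a scaling argument. The inverse estimate then follows from the second one combined with a standard inverse inequality for polynomials. Throughout we use that the family $\set{\T^h}$ is quasi-uniform and shape regular. Hence the bilinear maps $\xi_K\: \hat K \to K$ satisfy $\norm[L^\infty]{\nabla\xi_K} \lesssim h$, $\norm[L^\infty]{\nabla\xi_K^{-1}} \lesssim h^{-1}$ and $|\det \nabla\xi_K| \sim h^2$, uniformly in $K$ and $h$.

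\emph{Boundary trace.} On $\hat K$ the standard trace theorem gives $\norm[\bd\hat K]{\hat v}^2 \le C(\norm[\hat K]{\hat v}^2 + \norm[\hat K]{\hat v}\norm[\hat K]{\hat\nabla \hat v})$ for $\hat v \in H^1(\hat K)$. With $\hat v = v\circ\xi_K$, the edge measure scales like $h$, the cell measure like $h^2$, and gradients like $h^{-1}$. Transforming back yields
\[
\norm[\bd K]{v}^2 \lesssim h^{-1}\norm[K]{v}^2 + \norm[K]{v}\norm[K]{\nabla v}.
\]
Young's inequality then gives the first estimate.

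\emph{Trace on $\Gamma^i\cap K$.} This is the delicate step, because the cut $\Gamma^i\cap K$ can be arbitrary. We use the smoothness of $\Gamma^i$ and the assumption that $h$ is small. Let $\n$ be a smooth extension of the normal field to a tubular neighbourhood of $\Gamma^i$, with $|\n| \le 1$ and $\norm[W^{1,\infty}]{\n} \le C$. For $h$ small, $\Gamma^i\cap K$ separates $K$ into $K_f$ and $K_s$. Apply the divergence theorem on $K_s$ to $v^2 \n$:
\[
\int_{\Gamma^i\cap K} v^2 \,\n\cdot\n_s \ds = \int_{K_s} \div(v^2\n) \diff x - \int_{\bd K_s\cap \bd K} v^2\,\n\cdot\n_s \ds .
\]
On $\Gamma^i$ the left-hand side equals $\norm[\Gamma^i\cap K]{v}^2$. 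Bound the volume term by $C(\norm[K]{v}^2 + \norm[K]{v}\norm[K]{\nabla v})$. Bound the remaining boundary term by $\norm[\bd K]{v}^2$, which is controlled by the first estimate. Multiplying the volume bound by $h^{-1}\ge 1$ only weakens it, so in total
\[
\norm[\Gamma^i\cap K]{v}^2 \lesssim h^{-1}\norm[K]{v}^2 + \norm[K]{v}\norm[K]{\nabla v} + h\norm[K]{\nabla v}^2 ,
\]
and Young's inequality gives the claim with a constant $C\tsb{TI}$ independent of the cut position. The main obstacle is exactly this uniformity with respect to the cut. Arguing through the divergence theorem with a fixed smooth vector field, rather than through a local graph representation, avoids any dependence on $\kappa_i$. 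For $h$ small, the curvature of $\Gamma^i$ on $K$ only enters through the bounded constant $\norm[W^{1,\infty}]{\n}$.

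\emph{Inverse estimate.} For $v_h$ with $v_h|_K\circ\xi_K \in Q_r(\hat K)$, the polynomial space on the reference cell is finite-dimensional, so $\norm[\hat K]{\hat\nabla\hat v_h}\le C_r\norm[\hat K]{\hat v_h}$. Scaling back gives $\norm[K]{\nabla v_h} \le C h^{-1}\norm[K]{v_h}$. Inserting this into the second trace estimate yields
\[
\norm[K\cap\Gamma^i]{v_h} \le C\tsb{TI}(1+C)\,h^{-\frac12}\norm[K]{v_h} =: C\tsb{inv}\, h^{-\frac12}\norm[K]{v_h}.
\]
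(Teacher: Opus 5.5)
Your proof is correct, but it follows a different route from the paper: the paper gives no argument of its own and simply cites \cite[Lem.~2, Lem.~3]{BURMAN2012FictitiousDomainII}, where the results are formulated for triangular meshes. The third inequality is then just the cut trace inequality combined with the standard inverse estimate $\norm[K]{\nabla v_h}\lesssim h^{-1}\norm[K]{v_h}$, which is exactly your last step.

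Your version adds two things. First, it gives an explicit scaling argument for the bilinear maps $\xi_K$ of the quadrilateral cells actually used here. Second, it proves the cut trace inequality via the divergence theorem applied to $v^2\n$ on $K\cap\Omega_s$. Here $\n$ is a $C^1$ extension of the interface normal, e.g.\ the gradient of the signed distance, which is smooth on a neighbourhood of width larger than $h$ once $h$ is below the reach of $\Gamma^i$.

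This makes uniformity with respect to the cut position transparent. The only geometric input is a bound on $\div\n$, i.e.\ on the curvature. No assumption is needed that $\Gamma^i$ is locally a graph over an edge or meets each edge at most once. Your remark that $\Gamma^i$ separates $K$ into $K_f$ and $K_s$ is in fact never used.

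The one step to tighten is the application of the divergence theorem itself. If $\Gamma^i$ touches $\bd K$ tangentially or passes through a vertex, $K\cap\Omega_s$ need not be Lipschitz. In that case, apply Gauss--Green for $v\in C^1(\cl K)$ on the finite-perimeter set $K\cap\Omega_s$, and bound any portion of $\Gamma^i$ lying on $\bd K$ directly by $\norm[\bd K]{v}$. Then pass to $v\in H^1(K)$ by density, which at the same time defines the trace on $K\cap\Gamma^i$.
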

Note that {these estimates are} valid for any smooth interface $\Gamma^i$.

\subsection{Extension of {stability} to the computational domains}

To prove {stability} of the semi-discrete formulation,
norms of certain functions on the computational domains
need to be bounded by their respective norms on the physical domains.
For $P_r$ finite elements on triangular meshes ($r\ge 1$),
the following estimates are well-established,
see~\cite{BURMAN2012FictitiousDomainII} for $r=1$
and~\cite{MassingLarsonLoggRognes2014} for $r>1$.
To our knowledge, the following proof
for $Q_r$ elements on quadrilateral meshes is new.

\begin{lemma}%
  \label{lem_stab_gen}%
  Let Assumption~\ref{FKK} hold, let $r \in \N${, $\interf:=\Omega_i \setminus G_h$ (see \cref{fig:domains})} and $i \in \set{f, s}$.
  Any $v^h \in V_i^{h, r}$
  then satisfies the following estimate
  with some $C > 0$ for $l \in \set{0, 1}$,
  where $\nabla^0 := \textup{id}$:
  \begin{align}\label{ghost_gen}
   C \norm[\Oih]{\nabla^l v^h}^2
   &\le \norm[\interf]{\nabla^l v^h}^2
     + \sum_{j=1}^r \frac{h^{2(j-l)+1}}{({j-l})!^2}
     \ifcase0 \sumFi {w_F^i} \or \sumFKi w(\kappa_i) \fi g_F^j(v^h,v^h)
     .
 \end{align}
\end{lemma}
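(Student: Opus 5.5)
The plan is to reduce \eqref{ghost_gen} to a local estimate between two neighbouring cells and then chain it along the paths supplied by Assumption~\ref{FKK}. Since $\Oih = \interf \cup G^h$ up to cells of $\Omega_i$ not in $G^h$, which already lie in $\interf$, it suffices to bound $\norm[K]{\nabla^l v^h}^2$ for every cell $K\subseteq G^h$. The central local estimate concerns two cells $K, K'$ sharing a face $F\in\F_G^i$. For $v^h\in V_i^{h,r}$ we have
\begin{equation*}
  \norm[K]{\nabla^l v^h}^2 \le C\Bigl( \norm[K']{\nabla^l v^h}^2 + \sum_{j=1}^r \frac{h^{2(j-l)+1}}{(j-l)!^2}\, g_F^j(v^h,v^h)\Bigr).
\end{equation*}
To prove it, we take $p' := $ the polynomial $v^h|_{K'}$, extended canonically to $K$. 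On an axis-parallel quadrilateral (after the bilinear map $\xi_K$; quasi-uniformity lets us treat the cells as shape-regular images of squares), the difference $v^h|_K - p'$ is a $Q_r$ polynomial on $K$. Its value and tangential derivatives vanish on $F$ by continuity. We then expand it in a Taylor series in the normal direction $x_n$ from the face:
\begin{equation*}
  (v^h|_K - p')(x) = \sum_{j=1}^r \frac{x_n^j}{j!}\,\jump{\partial_{\n}^j v^h}(x_F).
\end{equation*}
This expansion is exact because a $Q_r$ function has degree at most $r$ in $x_n$, and the coefficients are functions on $F$, namely polynomials of degree $\le r$ in the tangential variable.

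Integrating over $K$, with $|x_n|\le h$, gives
\begin{equation*}
  \norm[K]{v^h|_K - p'}^2 \lesssim \sum_j \frac{h^{2j+1}}{j!^2}\norm[F]{\jump{\partial_{\n}^j v^h}}^2,
\end{equation*}
which is the case $l=0$. For $l=1$ we differentiate the expansion. The normal derivative produces the factor $x_n^{j-1}/(j-1)!$, which yields the weights $h^{2(j-1)+1}/(j-1)!^2$. The tangential derivative acts on the face coefficients; an inverse estimate on $F$ turns this into an $h^{-1}$ factor, and this is absorbed using that the $(j+1)$-st jump already carries the weight $h^{2j+1}$. The remaining piece is controlled because $\norm[K]{\nabla^l p'}\le C\norm[K']{\nabla^l p'}$ for polynomials on neighbouring cells of comparable size, by equivalence of norms on a finite-dimensional space and scaling.

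With the local estimate in hand, we apply it repeatedly along the path $K=K_1,\dots,K_M=K_i$ of Assumption~\ref{FKK}. This gives $\norm[K]{\nabla^l v^h}^2 \le C^{M}\bigl(\norm[K_i]{\nabla^l v^h}^2 + \sum_{F\text{ on path}}\sum_j(\cdots)\bigr)$, where $C^M\le C^{N_\F}$ is bounded independently of $h$. Summing over all $K\subseteq G^h$, each final cell $K_i\subseteq\interf$ is counted at most $N_K$ times, and each face is counted a bounded number of times. Finally, the weights satisfy $w_F^i\ge \wmax^{-1}$, a fixed positive constant, so inserting them costs only a constant. This proves \eqref{ghost_gen}.

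The main obstacle is the exactness of the normal Taylor expansion for $Q_r$ on general (bilinearly mapped) quadrilaterals, since after the mapping $v^h\circ\xi_K$ need not be polynomial in physical coordinates. We expect to handle this by first assuming parallelogram or axis-aligned cells, where the map is affine, and treating general quasi-uniform cells as $O(h)$-perturbations. Alternatively, we would perform the expansion on the reference patch formed by the two cells, using the normal direction of the reference face. The $l=1$ tangential-derivative bookkeeping also needs care so that the factorial weights come out as stated.
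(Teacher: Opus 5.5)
There is a genuine gap. Your chaining, face counting and use of $w_F^i\ge\wmax^{-1}$ are fine and agree with the paper. The problem is the two-cell estimate, which is the core of the lemma: you prove it only for axis-parallel rectangles, and the fixes you suggest for general cells do not work.

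Your argument needs $v^h|_K-p'$, with $p'$ the continuation of $v^h|_{K'}$ into $K$, to have degree at most $r$ in the expansion variable. For mapped $Q_r$ elements this fails. We have $p'\circ\xi_K=(v^h|_{K'}\circ\xi_{K'})\circ(\xi_{K'}^{-1}\circ\xi_K)$, and $\xi_{K'}^{-1}\circ\xi_K$ is in general not even polynomial, so $p'\circ\xi_K\notin Q_r(\hat K)$. In physical coordinates neither piece is a polynomial, so the expansion in $x_n$ does not terminate. Its remainder involves normal jumps of order $>r$, which the ghost penalty does not see.

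Affine cells are not enough either. On a non-rectangular parallelogram a $Q_r$ function can have degree $2r$ in the physical normal coordinate. Also, $p'\circ\xi_K\in Q_r(\hat K)$ only if $\xi_{K'}^{-1}\circ\xi_K$ preserves coordinate directions. The perturbation idea fails because shape regularity and quasi-uniformity allow cells whose deviation from a parallelogram is of order one relative to $h$. There is also no common reference patch on which both restrictions are $Q_r$. Since covering bilinearly mapped quadrilaterals is exactly what this lemma adds to the literature, the central step is missing.

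The missing idea is never to continue the neighbour's function into $K$. The paper Taylor-expands $\hat v=v^h|_K\circ\xi_K$ itself on $\hat K$ in $\hat x_1$ from the reference face; this is exact because $\hat v\in Q_r(\hat K)$. It then expresses $\hat\partial_1$ through the pulled-back normal direction $\hat{\n}_\xi$ and the tangential derivative $\hat\partial_2$. Tangential derivatives on $\hat F$ are removed by norm equivalence on $Q_r(\hat F)$. Mapping back gives $\norm[K]{\nabla^l v}^2\lesssim h\norm[F]{\nabla^l v}^2+\sum_{j=1}^r\frac{h^{2(j-l)+1}}{(j-l)!^2}\norm[F]{\partial_{\n}^j v}^2$, with traces taken from the $K$-side only. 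Only afterwards is each $K$-side trace split into a jump plus the $K'$-side trace, with continuity handling $j=0$ and the tangential parts. The $K'$-side traces are then bounded by inverse trace inequalities on $K'$ alone, which hold for each shape-regular mapped cell individually.

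A finite-dimensional norm-equivalence argument also works. Suppose $v^h|_{K'}=0$ (or is constant, for $l=1$) and $\jump{\partial_{\n}^j v^h}=0$ on $F$ for $1\le j\le r$, with $v^h$ continuous. Then all derivatives of $v^h|_K$ of order $\le r$ vanish on $F$, hence $\hat\partial_1^j\hat v=0$ on $\hat F$ for $j\le r$, and so $\hat v\equiv 0$ (respectively constant). Shape regularity then gives uniform constants.

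On Cartesian meshes your difference-based argument is valid. One correction there: for $l=1$, the tangential derivative of the $j$-th term is bounded directly by $\frac{h^{2j-1}}{j!^2}\norm[F]{\jump{\partial_{\n}^j v^h}}^2\le\frac{h^{2(j-1)+1}}{(j-1)!^2}\norm[F]{\jump{\partial_{\n}^j v^h}}^2$. No help from the $(j+1)$-st jump is needed.
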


\begin{remark}
  \Cref{lem_stab_gen} implies in particular the following estimates
  for functions $v_s^h, u^h \in V_s^{h, m_s}$, $p^h \in V_f^{h, m_f-1}$
  and $v_f^h \in V_f^{h, m_f}$ with constants $C_{v_f}, C_p, C_{v_s}, C_u > 0$:
  \begin{align}
    \label{est:vf}
    C_{v_f} \normOfh{\nabla v_f^h}^2
    &\le \normIf{\nabla v_f^h}^2
      + \ghw[v_f]{v_f^h}{v_f^h}, \\
      \label{est:p}
    C_p \normOfh{p^h}^2
    &\le \normIf{p^h}^2 + \ghw[p]{p^h}{p^h}, \\
    \label{est:vs}
    C_{v_s} \normOsh{v_s^h}^2
    &\le \normIs{v_s^h}^2 + \ghw[v_s]{v_s^h}{v_s^h}, \\
    \label{est:u}
    C_u \normOsh{\nabla u^h}^2
    &\le \normIs{\nabla u^h}^2 + \ghw[u]{u^h}{u^h}.
  \end{align}
\end{remark}

\begin{proof}[Proof (\cref{lem_stab_gen})]
  We start by splitting
  $\smash[b]{\norm[\Oih]{\nabla^l v^h}^2}
  = \smash[b]{\norm[\interf]{\nabla^l v^h}^2}
  + \norm[G^h]{\nabla^l v^h}^2$.
  In order to estimate
   $\norm[G^h]{\nabla^l v^h}$ in terms of
   $\smash[b]{\norm[\interf]{\nabla^l v^h}}$, let $\hat{K}$ be the reference unit square
  and for each $K \in \T_i^h$ let $\xi_K\: \hat{K} \to K$
  be
  {the (unique) bijective regular map $\xi_k\in Q_1$.} Let $v := v^h|_K$.
  By definition of $V_i^{h, r}$ (see \eqref{FEspaces}),
  it holds $(v \circ \xi_K) \in Q_r(\hat K)$.
  Let $K' \in \T_i^h$ be a neighboring element of $K$
  with the common face $F := \cl{K} \cap \cl{K'}$.
  Let $\hat v(\hat x) = v(\xi_K(\hat x))$
  and let $\hat{F}:=\xi_K^{-1}(F)$ be,
  without loss of generality, a vertical face of~$\hat{K}$
  {(see \cref{fig:lem_stab_gen})}.

  {As the map $\xi_K$ may be nonlinear,
  the unit normal vector $\hat{\n}$ of $\hat{F}$
  is not necessarily mapped to a normal vector $\n$
  of the corresponding face $F$ of the element $K$ by $\xi_K$,
  i.e., $(\hat\nabla \xi_K) \hat\n$ and
  $\n = \frac{(\hat\nabla \xi_K)^{-\top} \hat\n}{\|(\hat\nabla \xi_K)^{-\top} \hat\n\|}$
  might point in different directions (see \cref{fig:lem_stab_gen}).
  We denote the normalized vector that $\xi_K$ maps to $\n$
  by $\hat\n_\xi=\frac{(\hat\nabla \xi_K)^{-1} \n}{\|(\hat\nabla \xi_K)^{-1} \n\|}$. By the chain rule, it holds that $\hat\nabla \hat{v}(\hat{x})=(\hat\nabla \xi_K)^{\top} \nabla v(x)$ for $x = \xi_K(\hat{x})$, and, as a consequence
  \begin{align}
    \label{normalDer}
    \hat{\partial}_{\n,\xi} \hat{v}(\hat{x})
    :=
    \hat\nabla \hat{v}(\hat{x}) \hat\n_{\xi}
    =(\hat\nabla \xi_K)^{\top} \nabla v(x) \cdot \frac{(\hat\nabla\xi_K)^{-1} \n}{\|(\hat\nabla \xi_K)^{-1} \n\|} =
    \frac{\partial_{\n} v(x)}{\|(\hat\nabla \xi_K)^{-1} \n\|}.
  \end{align}
  }
  We will first show the estimate
  \begin{align}
    \label{hatF2}
    \norm[\hat K]{\hat\nabla^l\hat v}^2
    \lesssim \norm[\hat F]{\hat\nabla^l\hat v}^2
    + \sum_{j=1}^r \frac{1}{{(j-l)!^2}} \norm[\hat F]{{\hat{\partial}_{\n,\xi}^j} \hat v}^2.
  \end{align}

  Let $\pi_{\hat{F}}\: \hat{K} \to \hat{F}$
  pro\-ject every
  point $\hat x \in \hat K$ to the closest point on $\hat F$
  {(see \cref{fig:lem_stab_gen})}.
  Then, we have by Taylor expansion
  \begin{align}
    \label{taylor2}
    \hat{\nabla}^l\hat{v}(\hat{x})
    = \hat{\nabla}^l \hat{v}(\pi_{\hat{F}}(\hat{x}))
    + \sum_{j=1}^r \frac{1}{j!} \hat{\partial}_1^j \hat{\nabla}^l
    \hat{v}(\pi_{\hat{F}}(\hat{x})) (\hat{x}_1 - \pi_{\hat{F}}(\hat{x})_{1})^j,
  \end{align}
  where $\hat{x}_1$ and $\pi_{\hat{F}}(\hat{x})_1$ denote
  the horizontal components of the corresponding points.

  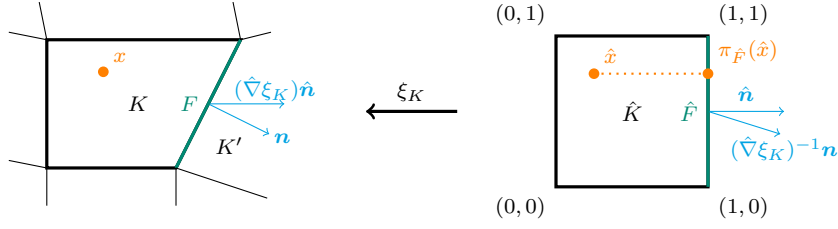
\begin{figure}
  \centering
  \begin{tikzpicture}[font = \scriptsize, scale = 1.0]
    \coordinate (A1) at (-0.2, 0.5);
    \coordinate (A2) at (1.5, 0.5);
    \coordinate (B1) at (-0.2, 2.2);
    \coordinate (B2) at (2.35, 2.2);

    \draw[very thick] (A1) -- (A2) -- (B2) -- (B1) -- cycle;

    \node at (1.0, 1.35) {\textbf{$K$}};
    \node at (2.2, 0.8) {\textbf{$K'$}};
  \draw[anchor=east] (1.925, 1.35) node {\textcolor{PineGreen}{$F$}};
    \draw[color=Cerulean, ->] (1.925, 1.35) -- ++(0.8, -0.4);
    \draw (2.9, 0.9) node {\textcolor{Cerulean}{$\n$}};
    \draw[color=Cerulean, ->] (1.925, 1.35) -- ++(1.0, 0.0);
    \draw (2.8, 1.55) node {\textcolor{Cerulean}{$(\hat\nabla \xi_K) \hat\n$}};
    \draw[very thick, PineGreen] (A2) -- (B2);
    \fill[orange] (0.54375, 1.775) circle (2pt);
    \draw[anchor=south west] (0.54375, 1.775) node {\textcolor{orange}{$x$}};

    \draw (A1) -- ++(-0.5, 0.1);
    \draw (A1) -- ++(0.0, -0.5);

    \draw (A2) -- ++(0.0, -0.5);
    \draw (A2) -- ++(1.2, -0.4);

    \draw (B1) -- ++(-0.5, 0.1);
    \draw (B1) -- ++(-0.1, 0.5);

    \draw (B2) -- ++(-0.1, 0.5);
    \draw (B2) -- ++(0.4, 0.1);

    \draw[very thick, <-] (4.0, 1.25) -- (5.2, 1.25);
    \draw[anchor=south] (4.6, 1.25) node {$\xi_K$};

    \draw[very thick] (6.5, 0.25) rectangle (8.5, 2.25);
    \draw[anchor=north east] (6.5, 0.25) node {\scriptsize $(0,0)$};
    \draw[anchor=south east] (6.5, 2.25) node {\scriptsize $(0,1)$};
    \draw[anchor=north west] (8.5, 0.25) node {\scriptsize $(1,0)$};
    \draw[anchor=south west] (8.5, 2.25) node {\scriptsize $(1,1)$};
    \node at (7.5, 1.25) {\textbf{$\hat{K}$}};
    \draw[color=Cerulean, ->] (8.5, 1.25) -- ++(1.0, 0.0);
    \draw[anchor=south] (9.0, 1.25) node {\textcolor{Cerulean}{$\hat\n$}};
    \draw[color=Cerulean, ->] (8.5, 1.25) -- ++(0.95, -0.3);
    \draw (9.5, 0.75) node {\textcolor{Cerulean}{$(\hat\nabla \xi_K)^{-1} \n$}};
    \draw[very thick, PineGreen] (8.5, 0.25) -- (8.5, 2.25);
    \draw[anchor=east] (8.5, 1.25) node {\textcolor{PineGreen}{$\hat{F}$}};

    \fill[orange] (7.0, 1.75) circle (2pt);
    \draw[anchor=south west] (7.0, 1.75) node {\textcolor{orange}{$\hat{x}$}};
    \fill[orange] (8.5, 1.75) circle (2pt);
    \draw[anchor=south west] (8.5, 1.75) node {\textcolor{orange}{$\pi_{\hat{F}}(\hat{x})$}};
    \draw[dotted, orange, thick] (7.0, 1.75) -- (8.5, 1.75);
  \end{tikzpicture}
  \caption{{Example of a cell $K$
      with face $F = \cl K \cap \cl K'$ transformed by $\xi_K$
      from the reference unit square $\hat{K}$ with vertical face $\hat{F}$,
      and a visualization of the (normal) vectors
      $\n$, $\hat\n$, $(\hat\nabla \xi_K) \hat\n$, $(\hat\nabla \xi_K)^{-1} \n$
      and the projection $\pi_{\hat{F}}$ of a point $\hat{x} \in \hat{K}$
      used in the proof of \cref{lem_stab_gen}.}}
  \label{fig:lem_stab_gen}
  \end{figure}

  We take the squares in \eqref{taylor2} and estimate
  \begin{align*}
    \hat{\nabla}^l \hat{v}(\hat{x})^2
    \le c_1 \biggl[
    \hat{\nabla}^l \hat{v}(\pi_{\hat{F}}(\hat{x}))^2
    + \sum_{j=1}^r \frac{1}{(j!)^2} \bigl(
    \hat{\partial}_1^j \hat{\nabla}^l
    \hat{v}(\pi_{\hat{F}}(\hat{x})) \bigr)^2
    (\hat{x}_{1} - \pi_{\hat{F}}(\hat{x})_{1})^{2j} \biggr]
  \end{align*}
  with a constant $c_1 > 0$ and with
  $(\hat{x}_{1} - \pi_{\hat{F}}(\hat{x})_{1})^2 \le 1$.
  Finally, we integrate in horizontal and vertical directions to obtain
  \begin{align}\label{est42}
    \int_{\hat{K}} |\hat{\nabla}^l\hat{v}(\hat{x})|^2 \diff\hat{x}
    \le c_1 \biggl[
    \int_{\hat{F}} |\hat{\nabla}^l \hat{v}(\hat{s})|^2 \diff\hat{s}
    + \sum_{j=1}^r \frac{1}{(j!)^2} \int_{\hat{F}}
    |\hat{\partial}_1^j \hat{\nabla}^l \hat{v}(\hat{s})|^2 \diff\hat{s}
    \biggr].
  \end{align}
  The horizontal integral does not appear on the right-hand side,
  as $\pi_{\hat{F}}(\hat{x})$ is independent of the horizontal coordinate.

  {Due to the shape regularity of the mesh,
  the tangential vector $\hat{e}_2$ of $F$
  and the perturbed normal vector $\hat\n_{\xi}$
  are linearly independent,
  and we can express each derivative
  $\hat{\partial}_1^j \hat{\partial}_k^l \hat{v}$
  for $k\in\{1,2\}$ and $l\in\{0,1\}$
  by means of the directional derivatives
  \begin{align*}
  \hat{\partial}_1^j \hat{\partial}_k^l \hat{v}(\hat{s}) = \sum_{i=0}^{j+l} \alpha_j \hat{\partial}_{\n,\xi}^i \hat{\partial}_2^{j+l-i} \hat{v}(\hat{s})
  \end{align*}
  with bounded constants $\alpha_j \in \R, |\alpha_j| \le c$.
  For the last term in \eqref{est42} this yields
  \begin{align}\label{est43}
  \sum_{j=1}^r \frac{1}{(j!)^2} \int_{\hat{F}}
    |\hat{\partial}_1^j \hat{\nabla}^l \hat{v}(\hat{s})|^2 \diff\hat{s} \leq c \sum_{j=1}^r \frac{1}{(j!)^2} \sum_{i=0}^{j+l} \int_{\hat{F}}
    |\hat{\partial}_{\n,\xi}^i \hat{\partial}_2^{j+l-i} \hat{v}(\hat{s})|^2 \diff\hat{s}.
  \end{align}
   Next}, we use that both $\norm{\hat{\varphi}}_{\hat{F}}$ and
  {$\norm{\hat{\varphi}}_{\tilde{H}_2^m(\hat{F})} :=
  \bigl(\sum_{i=0}^m \norm{\hat{\partial}_2^i
    \hat{\varphi}}_{\hat{F}}^2 \bigr)^{1/2}$}
  define norms on $Q_r(\hat{F})$, {so that we can estimate
  \begin{align*}
  \int_{\hat{F}}
    |\hat{\partial}_{\n,\xi}^i \hat{\partial}_2^{j+l-i} \hat{v}(\hat{s})|^2 \diff\hat{s} \lesssim  \|\hat{\partial}_{\n,\xi}^i \hat{v}\|_{\tilde{H}_2^{j+l}(\hat F)}^2 \lesssim  \|\hat{\partial}_{\n,\xi}^i \hat{v}\|_{\hat F}^2 .
  \end{align*}
We will use this estimate for $(i,l)\neq (0,1)$. For $(i,l)=(0,1)$ we obtain similarly 
  \begin{align*}
 \int_{\hat{F}}
    |\hat{\partial}_{\n,\xi}^i \hat{\partial}_2^{j+l-i} \hat{v}(\hat{s})|^2 \diff\hat{s} \lesssim \|\hat{\partial}_2^l \hat{v}\|_{\hat F}^2.
    \end{align*}  
   }
 {Substituting into~\eqref{est43}, we obtain}
  {\begin{align*}
  \sum_{j=1}^r \frac{1}{(j!)^2} \int_{\hat{F}}
    |\hat{\partial}_1^j \hat{\nabla}^l \hat{v}(\hat{s})|^2 \diff\hat{s} \,&\lesssim \, \sum_{j=1}^r \frac{1}{(j!)^2} \left( \|\hat{\partial}_2^l \hat{v}\|_{\hat F}^2+ \sum_{i=1}^{j+l} \int_{\hat{F}}
    |\hat{\partial}_{\n,\xi}^i\hat{v}|^2 \diff\hat{s}\right)\\ &\lesssim \|\hat{\partial}_2^l \hat{v}\|_{\hat F}^2+\sum_{j=1}^{r+l} \frac{1}{(j-l)!^2}
    \|\hat{\partial}_{\n,\xi}^j\hat{v}\|_{\hat{F}}^2.
  \end{align*}}
   {Hence, it follows from~\eqref{est42}}:
  \begin{align*}
    \int_{\hat{K}} |\hat{\nabla}^l\hat{v}(\hat{x})|^2 \diff\hat{x}
     \lesssim \biggl[
    \int_{\hat{F}} |\hat{\nabla}^l \hat{v}(\hat{s})|^2 \diff\hat{s}
    {{}+ \sum_{j=1}^{r+l} \frac{1}{(j-l)!^2}
    \|\hat{\partial}_{\n,\xi}^j\hat{v}\|_{\hat{F}}^2 } \biggr].
  \end{align*}
  This shows \eqref{hatF2}.
 Standard estimates based on an integral transformation then yield,
  {using~\eqref{normalDer} and the fact that $\|(\hat\nabla \xi_K)^{-1} \n\|\lesssim h$}
  \begin{align}
    \begin{split}
      &\norm{\nabla^l v}_K^2\label{pK_pF}
      \lesssim h^{-2l} |K| \norm{\hat{\nabla}^l \hat{v}}_{\hat{K}}^2
      \lesssim h^{-2l} |K| \biggl[
        \norm{\hat{\nabla}^l \hat{v}}_{\hat{F}}^2
        + \sum_{j=1}^r \frac{1}{{(j-l)!})^2}
        \norm{{\hat{\partial}_{\n,\xi}^j} \hat{v}}_{\hat{F}}^2 \biggr] \\
      &\lesssim \frac{|K|}{|F|} \biggl[
        \norm{\nabla^l v}_{F}^2
        + \sum_{j=1}^r \frac{h^{2(j-l)}}{{(j-l)!}^2} \norm{{\partial_{\n}^j v}}_{F}^2 \biggr]
      \lesssim \biggl[ h \norm{\nabla^l v}_{F}^2
        + \sum_{j=1}^r \frac{h^{2(j-l)+1}}{{(j-l)!}^2} \norm{{\partial_{\n}^j v}}_{F}^2
      \biggr].
    \end{split}
  \end{align}
  Let $v' := v^h|_{K'}$.
  Using $\nabla^j v|_F = \jump{\nabla^j v^h} + \nabla^j v'|_F$
  and, for $l=0$, the fact that $v^h$ is continuous across elements,
  it follows that
  \begin{equation}
    \label{est_ph2}
    \begin{aligned}
      \norm[K]{\nabla^l v^h}^2
      &\lesssim  \biggl[ h \left(
        \norm[F]{\jump{\nabla^l v^h}}^2 + {} \norm[F]{\nabla^l v'}^2
      \right)
      + \sum_{j=1}^r \frac{h^{2(j-l)+1}}{{(j-l)!)}^2} \bigl(
      \norm[F]{\jump{{\partial_{\n}^j v^h}}}^2
      + \norm[F]{{\partial_{\n}^j v'}}^2 \bigr) \biggr] \\
      &\lesssim \biggl[ \norm[K']{\nabla^l v^h}^2
      + \sum_{j=1}^r \frac{h^{2(j-l)+1}}{({j-l})!^2}
      \norm[F]{\jump{{\partial_{\n}^j} v^h}}^2 \biggr].
    \end{aligned}
  \end{equation}
  Here, the second estimate
  follows from the inverse inequalities
  \begin{align*}
    \norm{\nabla^j v'}_{F}^2
    \le c h^{2(l-j)-1} \norm{\nabla^l v'}_{K'}^2
    \quad \text{for } j \ge l,
  \end{align*}
  {which also hold when replacing $\nabla^j$ by $\partial_{\n}^j$.}
  Now, let $K \subseteq G^h$ be an interface cell.
  Due to Assumption~\ref{FKK} there exists a cell $K_i \subseteq \interf$
  that can be reached from $K$
  by crossing a finite number $N_\F$ of faces in~$\Oih$,
  where $N_\F$ is independent of the mesh size $h$.
  Let $\F_{K K_f}$ denote the set of these faces.
  By repeated application of estimate \eqref{est_ph2} {with the corresponding constant $c_v>0$}, we obtain
  \begin{align*}
    \norm[K]{\nabla^l v^h}
    \le \max\{c_v^{N_\F},c_v\} \biggl[ \norm[K_i]{\nabla^l v^h}^2
    + \sum_{F \in \F_{K K_f}} \sum_{j=1}^r \frac{h^{2(j-l)+1}}{({j-l}!)^2}
    \norm[F]{\jump{{\partial_{\n}^j} v^h}}^2 \biggr].
  \end{align*}
  Since the number of times a cell $K_i \in \interf$
  is the final element of the paths among all $K \in G^h$
  is bounded by Assumption~\ref{FKK} by, say $N_{\Omega_i}$,
  summing over all $K \in G^h$ gives
  \begin{align}
  \begin{split}\label{finalghost}
    \norm[G^h]{\nabla^l v^h}^2
    \le N_{\Omega_i} \max\{c_v^{N_\F},c_v\} \biggl[
    \norm[\interf]{\nabla^l v^h}^2
    + \sum_{F \in \F_G^i} \sum_{j=1}^r \frac{h^{2(j-l)+1}}{{(j-l)}!^2}
    \norm[F]{\jump{{\partial_{\n}^j} v^h}}^2 \biggr].
    \end{split}
  \end{align}
  As $w(\kappa_i)$ is bounded below by $\wmax^{-1}$,
  this implies \eqref{ghost_gen}.
\end{proof}

\subsection{Stability estimate in the energy norm}

We define the following seminorms
for the stability and error analysis:
\begin{align*}
  \ET(U^h)^2
  &:= \frac{\rho_f}{2} \normOf{v_f^h}^2
    + \frac{\rho_s}{2} \normOsh{v_s^h}^2
    + \mu_s \normOsh{\nabla u^h}^2 \\
  \E_g(U^h)^2
  &:= \frac{\rho_s}{2} g_{v_s}^{h,w}(v_s^h,v_s^h) + \mu_s g_u^{h,w}(u^h,u^h), \qquad
  \tracenorm{\phi^h} := h^{-\frac12} \normGi{\phi^h} \\
  \normiii{U^h}^2 
  &:= \rho_f \nu_f \normOfh{\nabla v_f^h}^2
  + \rho_f \nu_f \gamma_N \tracenorm{v_f^h-v_s^h}^2 + g_p^{h,w}(p^h,p^h).
\end{align*}
We make the following assumption for the right-hand side functionals $F^n$.
\begin{assumption}%
  \label{ass:F}%
  There exists $\CF > 0$ such that
  the following estimate holds for
  $\Phi^{h,n} = (\phi_f^{h,n}, \xi^{h,n}, \phi_s^{h,n}, \psi^{h,n})$ in $\X^h$,
  $n = 1, \dots, N$ with
  $\pres := \phi_s^{h,n} - k^{-1} \delta \psi^{h,n}$:
  \begin{align}
    \label{F:ass}
    \sum_{n=1}^N k &F^n(\Phi^{h,n})
    \lesssim \CF
    \biggl[ \ET(\Phi^{h,N})^2 +\\
    &\sum_{n=1}^N k \bigl( \normiii{\Phi^{h,n}}^2 +
      {h^2\normOf{\nabla \xi^{h,n}}^2} + \normOsh{\phi_s^{h,n}}^2 + {\normOsh{\nabla \psi^{h,n}}}^2 +
      \normOsh{\nabla\pres}^2 \bigr) \biggr]^{\frac12}.
      \notag
  \end{align}
\end{assumption}
For the right-hand side of \cref{strong_form},
$F^n(\Phi^h) = \rho_f \prodOf{f{_f}(t_n)}{\phi_f^h}
+ \rho_s \prodOs{f{_s}(t_n)}{\phi_s^h}$,
{this assumption is satisfied by the Cauchy--Schwarz inequality with
\begin{align*}
  \sum_{n=1}^N k F^n(\Phi^{h,n})
  &\lesssim \CF \biggl[ \sum_{n=1}^N k\bigl(\normOf{\nabla \phi^{h,n}_f}^2 + \normOs{\phi^{h,n}_s}^2\bigr)\biggr]^{\frac12} \\
  &\leq \CF \biggl[ \sum_{n=1}^N k \bigl( \normiii{\Phi^{h,n}}^2 + \normOs{\phi_s^{h,n}}^2 \bigr) \biggr]^{\frac12}
\end{align*}
using} $\CF^2 = \sum_{n=1}^N k
\bigl( \norm[{H^{-1}(\Omega_f)}]{\rho_f f{_f}(t_n)}^2 + \normOs{\rho_s f{_s}(t_n)}^2 \bigr)$.
In the final a priori error analysis in \cref{sec_apriori},
we will show and use~\eqref{F:ass} also for a different right-hand side
functional consisting of interpolation and consistency errors.

\begin{theorem}[Stability estimate for the fully discretized
    bilinear form\ $\A^{kh}$]%
  \label{theo_stab}%
  Let $T > 0$ and let $t_n = n k$ for $n = 0, \dots, N$.
  {If $\gamma_N > 0$ is sufficiently large, it holds} for all
  $U^{h,n} = (v_f^{h,n}, p^{h,n}, v_s^{h,n}, u_s^{h,n})$ in $\X^h$ and
  $\vres = v_s^{h,n} - k^{-1} \delta u^{h,n}$,
  $n = 1, \dots, N$
  with $c > 0$ independent of $h$ and $k$:
  \begin{equation}
    \label{stab_est0}
  \begin{aligned}
    &\ET(U^{h,N})^2
    + \sum_{n=1}^N \bigl(
    k \normiii{U^{h,n}}^2 + \ET(\Udiff)^2
    \bigr) \\[-\jot]
    & \le
    \expcT \biggl[
      \ET(U^0)^2 + \E_g(U^0)^2
      + \sum_{n=1}^N \bigl( \A^{kh}(U^{h,n}, U^{h,n-1}; U^{h,n})
      + k \normOsh{\nabla\vres}^2 \bigr)
      \biggr].
  \end{aligned}
  \end{equation}
\end{theorem}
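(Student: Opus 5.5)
We test $\A^{kh}(U^{h,n},U^{h,n-1};\cdot)$ with $\Phi^h=U^{h,n}$, bound each contribution from below, sum over $n$, and close with a discrete Gronwall argument.

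\textbf{Mass term.} The polarization identity $2(a-b,a)=\|a\|^2-\|b\|^2+\|a-b\|^2$ is applied to the fluid and solid $L^2$ products and to the ghost penalty $g^{h,w}_{v_s}$. This gives
\[
M^h(\Udiff,U^{h,n}) = \tfrac{\rho_f}{2}\bigl(\normOf{v_f^{h,n}}^2-\normOf{v_f^{h,n-1}}^2+\normOf{\delta v_f^{h,n}}^2\bigr) + (\text{analogous } v_s \text{ terms including } \ghw[v_s]{\cdot}{\cdot}).
\]
By \eqref{est:vs}, the sum $\rho_s\normOs{v_s^{h}}^2+\rho_s g_{v_s}^{h,w}(v_s^h,v_s^h)$ controls $\normOsh{v_s^h}^2$ up to a constant. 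The energy is therefore telescoped in the combined quantity $\ET(\cdot)^2+\E_g(\cdot)^2$ restricted to the physical domains, and this quantity is equivalent to $\ET$ on the computational domains.

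\textbf{Solid elastic term.} The test function in $a_s^h$ and $g_u^{h,w}$ is $v_s^{h,n}$, not $u^{h,n}$. We therefore write $v_s^{h,n}=k^{-1}\delta u^{h,n}+\vres$. The part $k\,a_s^h(u^{h,n},k^{-1}\delta u^{h,n}) + 2\mu_s k\, g_u^{h,w}(u^{h,n},k^{-1}\delta u^{h,n})$ telescopes by polarization into differences of $\mu_s\normOs{\epsilon(u)}^2+\tfrac{\lambda_s}{2}\normOs{\div u}^2+\mu_s g_u^{h,w}(u,u)$ plus nonnegative increments. Korn's inequality on $\Omega_s$ (using $u=0$ on $\Gamma_s^D$), combined with \eqref{est:u}, bounds $\mu_s\normOsh{\nabla u^h}^2$ by this quantity. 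The residual part $k\bigl(a_s^h(u^{h,n},\vres)+2\mu_s g_u^{h,w}(u^{h,n},\vres)\bigr)$ is estimated by Cauchy--Schwarz and Young's inequality as
\[
ck\,\normOsh{\nabla u^{h,n}}^2 + k\,\normOsh{\nabla\vres}^2 .
\]
Here the ghost penalty on $\vres$ is bounded by inverse estimates: $h^{2l-1}\|\partial^l_n\vres\|_F^2\lesssim \|\nabla\vres\|^2_{K\cup K'}$. The first term, $ck\,\normOsh{\nabla u^{h,n}}^2$, is what forces the Gronwall factor $\expcT$. The second is the $k\normOsh{\nabla\vres}^2$ appearing on the right-hand side of \eqref{stab_est0}.

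\textbf{Fluid and Nitsche terms.} With $\Phi^h=U^{h,n}$ the pressure terms $-\prodOf{p}{\div v_f}+\prodOf{\div v_f}{p}$ cancel. In the interface form, the two pressure contributions inside $\prodGi{\sigma_f\n_f}{\cdot}$ and $\prodGi{\cdot}{\sigma_f(\phi_f,-\xi)\n_f}$ cancel as well. What remains is
\[
2\rho_f\nu_f\normOf{\epsilon(v_f)}^2 + 2\rho_f\nu_f g_{v_f}^{h,w} + g_p^{h,w} + \rho_f\nu_f\gamma_N\tracenorm{v_f-v_s}^2 - 4\rho_f\nu_f\prodGi{\epsilon(v_f)\n}{v_f-v_s}.
\]
The symmetric term is bounded by the discrete inverse inequality of \cref{traceineq} cellwise on $\Ofh$, giving $h^{1/2}\normGi{\nabla v_f}\lesssim\normOfh{\nabla v_f}$, followed by Young's inequality. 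The resulting term is absorbed into $\normOfh{\nabla v_f}^2$, which is controlled via \eqref{est:vf} together with Korn's inequality on $\Omega_f$ (valid since $v_f=0$ on $\Gamma_f^D$); this absorption requires $\gamma_N$ large. The outcome is $\A^{kh}(U^{h,n},U^{h,n-1};U^{h,n})\ge \frac12$ of the telescoped energy increments $+\,ck\normiii{U^{h,n}}^2-ck\ET(U^{h,n})^2-k\normOsh{\nabla\vres}^2$.

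\textbf{Summation and main obstacle.} Summing over $n$ and using $\ET(U^{h,0})$ together with $\E_g(U^0)$ for the initial data, a discrete Gronwall lemma (for $k$ small) yields \eqref{stab_est0}. The main obstacle is the solid part. The equivalence between the physical-domain energy plus ghost penalties and $\ET$ must be tracked consistently so that the increments $\ET(\Udiff)^2$ also appear on the left. Combining Korn's inequality with \eqref{est:u} requires care, since the ghost penalty uses full gradient jumps while $a_s$ contains only $\epsilon(u)$. The Nitsche absorption, by contrast, is standard.
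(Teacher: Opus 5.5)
Your proposal is correct and follows essentially the same route as the paper. You test with $U^{h,n}$, telescope the $L^2$ and $g_{v_s}^{h,w}$ mass terms, and split $v_s^{h,n}=k^{-1}\delta u^{h,n}+\vres$ in $a_s^h$ and $g_u^{h,w}$. You then apply Young to the residual part, pass to computational-domain norms via Korn together with \eqref{est:vs} and \eqref{est:u}, and close with a discrete Gr\"onwall argument for small $k$. The only difference is that you derive the fluid/Nitsche coercivity by hand, via pressure cancellation, the inverse trace inequality, Korn with \eqref{est:vf}, and $\gamma_N$ large, whereas the paper simply cites \cite[Lem.~3.1]{BURMAN2014FSI}.
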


\begin{proof}
  We consider the contributions to
  $\A^{kh}$ term by term.
  First, it holds for the discrete time derivatives $\delta v_i^{h,n}$
  by using a telescoping equality
  for $i \in \set{f, s}$:
  \begin{equation}
    \label{disctime}
    \rho_i \prodOf{\delta v_i^{h,n}}{v_i^{h,n}} =
    \frac12 \rho_i \bigl(
    \normOi{v_i^{h,n}}^2
    + \normOi{\delta v_i^{h,n}}^2
    - \normOi{v_i^{h,n-1}}^2 \bigr).
  \end{equation}
  The fluid bulk terms and the Nitsche terms can be estimated
  as in \cite[Lem.~3.1]{BURMAN2014FSI},
  \begin{equation}
    \label{fluidbulk}
    \begin{split}
      a_f^h(U^{h,n}&, U^{h,n}) + S_f^h(U^{h,n}, U^{h,n}) +  j^h(U^{h,n}, U^{h,n}) \\
      &\gtrsim \rho_f \nu_f \normOfh{\nabla v_f^{h,n}}^2
      + g_p^{h,w}(p^{h,n},p^{h,n})
      + \rho_f \nu_f \gamma_N \tracenorm{v_f^{h,n} - v_s^{h,n}}^2.
    \end{split}
  \end{equation}
  It remains to
  estimate the solid terms $a_s^h(U^{h,n}, U^{h,n})$.
  We split the solid stress into
  \begin{equation}
    \begin{aligned}
      \label{newesti}
      k \prodOs{\sigma_s(u^{h,n})&}{\nabla v_s^{h,n}}
      = 2 k \mu_s \prodOs{\symgrad{u^{h,n}}}{\nabla v_s^{h,n}}
      + k \lambda_s
      \prodOs{\tr{\symgrad{u^{h,n}}} \mathbb I}{\nabla v_s^{h,n}} \\
      &= 2 \mu_s
      \prodOs{\symgrad{u^{h,n}}}{\symgrad{\delta u^{h,n}}}
      + \lambda_s
      \prodOs{\tr{\nabla u^{h,n}}}{\tr{\nabla \delta u^{h,n}}}
      \\[-2pt]
      &\ + 2 k \mu_s \prodOs{\symgrad{u^{h,n}}}{\nabla\vres}
      + k \lambda_s \prodOs{\tr{\nabla u^{h,n}} \mathbb I}{\nabla\vres}.
    \end{aligned}
  \end{equation}
  For the last line in \eqref{newesti},
  the Cauchy--Schwarz and Young inequalities yield
  \begin{equation}
    \begin{aligned}
      \label{newesti2}
      2 k \mu_s &\prodOs{\symgrad{u^{h,n}}}{\nabla\vres}
      + k \lambda_s \prodOs{\tr{\nabla u^{h,n}} \mathbb I}{\nabla\vres} \\
      &\ge -c k \Bigl(
      \mu_s \normOs{\symgrad{u^{h,n}}}^2
      + \frac{\lambda_s}{2} \normOs{\tr{\nabla u^{h,n}}}^2 \Bigr)
      - c k \normOs{\nabla\vres}^2
    \end{aligned}
  \end{equation}
  with some $c > 0$.
  For the second line in \eqref{newesti}, we use a telescoping equality,
  \begin{equation}
    \label{esti3}
    \begin{aligned}
      2 \mu_s \prodOs{\symgrad{u^{h,n}}&}{\symgrad{\delta u^{h,n}} }
      + \lambda_s
      \prodOs{\tr{\nabla u^{h,n}}}{\tr{\nabla \delta u^{h,n}}}
      \\
      &= \mu_s \bigl( \normOs{\symgrad{u^{h,n}}}^2
      + \normOs{\symgrad{\delta u^{h,n}}}^2
      - \normOs{\symgrad{u^{h,n-1}}}^2 \bigr) \\[-2pt]
      &\ + \frac{\lambda_s}{2}
      \bigl( \normOs{\tr{\nabla u^{h,n}}}^2
      + \normOs{\tr{\nabla \delta u^{h,n}}}^2
      - \normOs{\tr{\nabla u^{h,n-1}}}^2 \bigr).
    \end{aligned}
  \end{equation}
  Combining \eqref{newesti}--\eqref{esti3} yields
  \begin{align*}
    k \prodOs{\sigma_s &(u^{h,n})}{\nabla v_s^{h,n}} \\
    &\ge \mu_s \left( (1 - c k) \normOs{\symgrad{u^{h,n}}}^2
    + \normOs{\symgrad{\delta u^{h,n}}}^2
    - \normOs{\symgrad{u^{h,n-1}}}^2 \right)
    - c k \normOs{\nabla\vres}^2 \\[-2pt]
    &+ \frac{\lambda_s}{2} \left( (1 - c k) \normOs{\tr{\nabla u^{h,n}}}^2
    + \normOs{\tr{\nabla \delta u^{h,n}}}^2
    - \normOs{\tr{\nabla u^{h,n-1}}}^2 \right).
  \end{align*}
  For the stabilization terms, we have by a similar argumentation
  \begin{equation}
    \begin{aligned}
      \label{stabsolid}
      \rho_s g_{v_s}^{h,w}&(\delta v_s^{h,n}, v_s^{h,n})
      + 2 k \mu_s g_u^{h,w}(u^{h,n}, v_s^{h,n}) \\
      &= \rho_s g_{v_s}^{h,w}(\delta v_s^{h,n}, v_s^{h,n})
      + 2 \mu_s g_u^{h,w}(u^{h,n}, \delta u^{h,n})
      + 2 k \mu_s g_u^{h,w}(u^{h,n}, \vres) \\
      &\ge \frac{\rho_s}{2} \left( g_{v_s}^{h,w}(v_s^{h,n},v_s^{h,n})
      + g_{v_s}^{h,w}(\delta v_s^{h,n}, \delta v_s^{h,n})
      - g_{v_s}^{h,w}(v_s^{h,n-1},v_s^{h,n-1}) \right) \\[-2pt]
      &\quad+ \mu_s \left( (1 - c k) g_u^{h,w}(u^{h,n}, u^{h,n})
      + g_u^{h,w}(\delta u^{h,n}, \delta u^{h,n}) \right) \\[-2pt]
      &\quad- \mu_s g_u^{h,w}(u^{h,n-1}, u^{h,n-1})
        - c k \normOsh{\nabla\vres}^2.
    \end{aligned}
  \end{equation}
  Combining \eqref{fluidbulk}--\eqref{stabsolid},
  we arrive at the estimate (with some constants $c, c_1, c_2 > 0$)
  {
    \begin{align}
\begin{split}
    \label{longbefGronw}
  	&c_1 k \normiii{U^{h,n}}^2
  	+ \frac{\rho_f}{2} \normOf{\delta v_f^{h,n}}^2
  	+ \frac{\rho_s}{2} \normOs{\delta v_s^{h,n}}^2
  	+ \frac{\rho_s}{2} g_{v_s}^{h,w}(\delta v_s^{h,n}, \delta v_s^{h,n})\\[-2pt]
  	&\quad+ \mu_s \normOs{\symgrad{\delta u^{h,n}}}^2
  	+ \frac{\lambda_s}{2} \normOs{\tr{\nabla \delta u^{h,n}}}^2
  	+ \mu_s g_u^{h,w}(\delta u^{h,n}, \delta u^{h,n}) \\[-2pt]
  	&\quad + \frac{\rho_f}{2} \normOf{v_f^{h,n}}^2
  	+ \frac{\rho_s}{2} \normOs{v_s^{h,n}}^2
  	+ \frac{\rho_s}{2} g_{v_s}^{h,w}(v_s^{h,n}, v_s^{h,n}) \\[-2pt]
  	&\quad+ (1 - c k) \Bigl( \mu_s \normOs{\symgrad{u^{h,n}}}^2
  	+ \frac{\lambda_s}{2} \normOs{\tr{\nabla u^{h,n}}}^2
  	+ \mu_s g_u^{h,w}(u^{h,n}, u^{h,n}) \Bigr) \\[-2pt]
  	&\le c_2\A^{kh}(U^{h,n},U^{h,n-1}; U^{h,n}) + c k \normOsh{\nabla\vres}^2 \\[-2pt]
  	&\quad+ \frac{\rho_f}{2} \normOf{v_f^{h,n-1}}^2
  	+ \frac{\rho_s}{2} \normOs{v_s^{h,n-1}}^2
  	+ \frac{\rho_s}{2} g_{v_s}^{h,w}(v_s^{h,n-1}, v_s^{h,n-1}) \\[-2pt]
  	&\quad + \mu_s \normOs{\symgrad{u^{h,n-1}}}^2
  	+ \frac{\lambda_s}{2} \normOs{\tr{\nabla u^{h,n-1}}}^2
  	+ \mu_s g_u^{h,w}(u^{h,n-1}, u^{h,n-1})
  	.\end{split}
  	\end{align}
By \eqref{est:vs}, \eqref{est:u} and Korn's inequality, we have
    \begin{align}\label{eq:korn}
        \begin{split}
            \frac{\rho_s}{2}\normOs{\delta v_s^{h,n}}^2
            &+ \frac{\rho_s}{2} g_{v_s}^{h,w}(\delta v_s^{h,n},\delta v_s^{h,n})
            + \mu_s \normOs{\symgrad{\delta u^{h,n}}}^2
            + \frac{\lambda_s}{2} \normOs{\tr{\nabla \delta u^{h,n}}}^2 \\
            &+ \mu_s g_u^{h,w}(\delta u^{h,n}, \delta u^{h,n})
            \ge c_3 \left(
            \frac{\rho_s}{2} \normOsh{\delta v_s^{h,n}}^2
            + \mu_s \normOsh{\nabla\delta u^{h,n}}^2 \right).
        \end{split}
    \end{align}
    Hence, summing~\eqref{longbefGronw} over $n=1\ldots,N$ yields
  \begin{align*}
    &\sum_{n=1}^N \left[c_3\ET(\delta U^{h,n})^2 + c_1k \normiii{U^{h,n}}^2\right]  + \frac{\rho_f}{2} \normOf{v_f^{h,N}}^2
    + \frac{\rho_s}{2} \normOs{v_s^{h,N}}^2\\[-2pt]
    &\quad
    + \frac{\rho_s}{2} g_{v_s}^{h,w}(v_s^{h,N}, v_s^{h,N})  + \mu_s \normOs{\symgrad{u^{h,N}}}^2
    + \frac{\lambda_s}{2} \normOs{\tr{\nabla u^{h,N}}}^2
    + \mu_s g_u^{h,w}(u^{h,N}, u^{h,N})\\[-2pt]
    &\le \ET(U^{0})^2 + \E_g(U^0)^2 +
    \sum_{n=1}^N \left[\A^{kh}(U^{h,n},U^{h,n-1}; U^{h,n}) + k \normOsh{\nabla\vres}^2\right] \\[-2pt]
    &\quad + ck\sum_{n=1}^{N}\left[\mu_s \normOs{\symgrad{u^{h,n}}}^2
    + \frac{\lambda_s}{2} \normOs{\tr{\nabla u^{h,n}}}^2
    + \mu_s g_u^{h,w}(u^{h,n}, u^{h,n})\right].
\end{align*}
  By the discrete Grönwall lemma, we obtain finally
  \begin{equation}\label{stab_est}
  \begin{aligned}
    &\sum_{n=1}^N \bigl(
    k \normiii{U^{h,n}}^2 + \ET(\Udiff)^2
    \bigr)
    +\frac{\rho_f}{2} \normOf{v_f^{h,N}}^2
    + \frac{\rho_s}{2} \normOs{v_s^{h,N}}^2\\[-2pt]
    &\, + \frac{\rho_s}{2} g_{v_s}^{h,w}(v_s^{h,N}, v_s^{h,N})
     +\mu_s \normOs{\symgrad{u^{h,N}}}^2
    + \frac{\lambda_s}{2} \normOs{\tr{\nabla u^{h,N}}}^2
    + \mu_s g_u^{h,w}(u^{h,N}, u^{h,N})\\
    & \le
    \expcT \biggl[
      \ET(U^0)^2 + \E_g(U^0)^2
      + \sum_{n=1}^N \bigl( \A^{kh}(U^{h,n}, U^{h,n-1}; U^{h,n})
      + k \normOsh{\nabla\vres}^2 \bigr)
      \biggr].
  \end{aligned}
  \end{equation}
  The statement of the theorem follows by using~\eqref{eq:korn} for $(v_s^{h,N}, u^{h,N})$ instead of $(\delta v_s^{h,n}, \delta u^{h,n})$.
  }
\end{proof}

\begin{remark}
  The last term in the estimate \eqref{stab_est} vanishes
  if we assume the relation $\vres = 0$,
  which is satisfied by the solution of \cref{prob:fullydisc}.
  In this case, \eqref{stab_est} can also be shown
  without the exponentially growing term $\expcT$,
  as the terms including $\vres$ in \eqref{newesti} and \eqref{stabsolid}
  vanish and hence, Grönwall's lemma is not needed.
  We state the theorem here in a more general form
  to be able to use it in \cref{sec_apriori}.
\end{remark}

\subsection{{Stability} of pressure}

Next, we prove a stability estimate for the pressure. Note
that in \cite{BURMAN2014FSI, BurmanFernandezGerosa2023}
stability for the pressure was obtained directly
in an $H^1$-seminorm by means of the pressure stabilization term.
{For} the case of inf-sup stable finite elements without pressure stabilization, {we can use an inf-sup condition to get control over $p^n$ in the same semi-norm, following arguments introduced recently} by Neilan and Olshanskii {for simplicial elements} in \cite{NeilanOlshanskii2024}.
\begin{lemma}%
  \label{lem_infsup}%
  For each $p^h \in \P^h$, there exist $w^h \in \V_f^h$ and $\gamma_1, C > 0$ such that
  \begin{align}
    \label{phwh}
    \gamma_1 {h^2 \normOfh{\nabla p^h}^2}
    &\le \prodOf{\div w^h}{p^h} - \prodGi{w^h \cdot \n}{p^h}
      + g_p^{h,w}(p^h, p^h), \\
    \label{whH1}
    \norm[H^k({\Ofh})]{w^h} &\le C{h^{2-k} \normIf{\nabla p^h}} \quad \text{ for } k\in\{0,1\}.
  \end{align}
  Moreover, the following inf-sup-condition holds for some $\gamma_2 > 0$:
  \begin{align}\label{infsup}
    \gamma_2 {h \normOfh{\nabla p^h}}
    \le \sup_{w^h \in \V_f^h}
    \frac{\prodOf{\div w^h}{p^h} - \prodGi{w^h \cdot \n}{p^h}}
    {\normOf{\nabla w^h}} + g_p^{h,w}(p^h, p^h)^{\frac12}.
  \end{align}
\end{lemma}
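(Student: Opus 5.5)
The construction follows the Verfürth-type argument for Taylor--Hood elements, adapted to the cut setting as in Neilan--Olshanskii. Given $p^h \in \P^h$, I define $w^h := -h^2 \I_h(\nabla p^h)$ on the vertex/edge/interior degrees of freedom. Here $\I_h$ is a Scott--Zhang/Clément-type quasi-interpolant into $(V_f^{h,m_f})^2$ vanishing on $\Gamma_f^D$. The weight $h^2$ can also be realised by a cell-local bubble-type scaling; on quasi-uniform meshes both choices are equivalent.

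\textbf{Step 1 (the bounds \eqref{whH1}).} Stability of $\I_h$ gives $\norm[\Ofh]{w^h}\lesssim h^2\normOfh{\nabla p^h}$. An inverse estimate gives $\normOfh{\nabla w^h}\lesssim h\normOfh{\nabla p^h}$. To replace $\normOfh{\nabla p^h}$ by $\normIf{\nabla p^h}$, I apply \cref{lem_stab_gen} with $l=1$. This produces a ghost-penalty remainder. It is absorbed either by a separate term carried along in the later absorption argument, or by building the extension into the construction itself: first extend $\nabla p^h$ from $\ifluid$ via the path argument of Assumption~\ref{FKK}. I would try the latter first.

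\textbf{Step 2 (the identity and lower bound \eqref{phwh}).} Integrating by parts on $\Omega_f$ gives
\begin{equation*}
\prodOf{\div w^h}{p^h}-\prodGi{w^h\cdot\n}{p^h}=-\prodOf{w^h}{\nabla p^h},
\end{equation*}
where the boundary term on $\Gamma_f^D$ vanishes. Inserting $w^h$ yields
\begin{equation*}
h^2\prodOf{\I_h\nabla p^h}{\nabla p^h}=h^2\normOf{\nabla p^h}^2-h^2\prodOf{(\mathrm{id}-\I_h)\nabla p^h}{\nabla p^h}.
\end{equation*}
Since $\nabla p^h$ is piecewise polynomial but only discontinuous in its normal component across faces, I estimate the interpolation defect by face jumps:
\begin{equation*}
\norm[K]{(\mathrm{id}-\I_h)\nabla p^h}^2\lesssim \sum_{F\cap\omega_K\neq\0} h\norm[F]{\jump{\nabla p^h}}^2.
\end{equation*}
Then Young's inequality gives $\ge \tfrac12 h^2\normOf{\nabla p^h}^2 - C h^3\sum_F\norm[F]{\jump{\nabla p^h}}^2$. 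This step has two obstacles:
\begin{itemize}
\item Interior face jumps in the physical domain are not penalised by $g_p^{h,w}$, which only acts on $\F_G^f$. For Taylor--Hood elements this is exactly why the classical approach uses a macro-element (Verfürth) argument instead of jumps. My fallback is therefore to use the uncut inf-sup stability of $Q_{m_f}/Q_{m_f-1}$ on the interior mesh $\ifluid$ to control $h^2\normIf{\nabla p^h}^2$, and to use the ghost penalty only on the cut band.
\item The bound must then be lifted from $\normOf{\nabla p^h}$ to $\normOfh{\nabla p^h}$. For this I use \cref{lem_stab_gen} with $l=1$ applied to $p^h$ (degree $m_f-1$). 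The face terms it produces are $h^{2j-1}$-weighted jumps of order $j\ge1$, which after multiplication by $h^2$ match the weights $h^{2l+1}$ of $g_p^{h,w}$, together with $w_F^f\ge\wmax^{-1}$.
\end{itemize}
Altogether this gives \eqref{phwh} with some $\gamma_1>0$.

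\textbf{Step 3 (the inf-sup condition \eqref{infsup}).} I divide \eqref{phwh} by $\normOf{\nabla w^h}\le C h\normIf{\nabla p^h}\le Ch\normOfh{\nabla p^h}$, which gives
\begin{equation*}
\gamma_1 h\normOfh{\nabla p^h}\le C\,\frac{\prodOf{\div w^h}{p^h}-\prodGi{w^h\cdot\n}{p^h}}{\normOf{\nabla w^h}}+\frac{g_p^{h,w}(p^h,p^h)}{Ch\normOfh{\nabla p^h}}.
\end{equation*}
For the last term, either it is $\le g_p^{h,w}(p^h,p^h)^{1/2}$, or $g_p^{h,w}(p^h,p^h)^{1/2}\ge Ch\normOfh{\nabla p^h}$. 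In the latter case \eqref{infsup} holds trivially. Taking the supremum over $w^h$ then gives \eqref{infsup}.

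I expect the main obstacle to be Step 2. There, the local inf-sup argument on interior macro-elements has to be combined with the ghost-penalty extension of \cref{lem_stab_gen} across the cut band.
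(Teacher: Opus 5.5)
Your leading construction $w^h=-h^2\I_h(\nabla p^h)$ does not work, and the reason is exactly the first obstacle you flag.

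\textbf{Where the leading construction fails.} Since $\nabla p^h$ jumps in its normal component across every face of $\T_f^h$, the defect $h^2\prodOf{(\mathrm{id}-\I_h)\nabla p^h}{\nabla p^h}$ is controlled only by $h^3\sum_F\norm[F]{\jump{\partial_{\n}p^h}}^2$ over \emph{all} faces. By an inverse estimate these terms are of the same order $h^2\normOf{\nabla p^h}^2$ as the main term. They therefore cannot be absorbed, and they cannot be bounded by $g_p^{h,w}$, which only acts on $\F_G^f$.

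This is a genuine failure, not a bookkeeping issue. Take the checkerboard $Q_1$ pressure, with nodal values alternating $\pm1$. Then $\nabla p^h$ has zero mean on every cell, so an averaging quasi-interpolant essentially annihilates it, while $h\normOf{\nabla p^h}\sim1$. Verfürth's construction avoids this by using only tangential derivatives, which are continuous across faces.

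There is a second, independent problem. The bound \eqref{whH1} has only $\normIf{\nabla p^h}$ on the right and no ghost-penalty term. Any $w^h$ built from $p^h$ on the band $G^h$ therefore gives at best the weaker bound with $\normOfh{\nabla p^h}$. So the Step~1 remainder cannot simply be carried along if you want the statement as written.

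\textbf{Your fallback is the paper's proof.} It removes both problems at once, so make it the argument rather than a contingency.
\begin{itemize}
\item Take $w^h$ from the $h$-weighted Taylor--Hood inf-sup condition on the fitted staircase mesh of $\ifluid$, with $w^h=0$ on $\bd\ifluid$. The paper imports this from Zulehner's Cor.~1. No mean-value normalisation is needed, since constants lie in the kernel of $\nabla$.
\item This gives $\gamma_1h^2\normIf{\nabla p^h}^2\le\iprod[\ifluid]{\div w^h}{p^h}$ and $\|w^h\|_{H^k(\ifluid)}\lesssim h^{2-k}\normIf{\nabla p^h}$. Extend $w^h$ by zero.
\item Then $w^h=0$ on $\Gamma^i$, so the interface term drops and $\prodOf{\div w^h}{p^h}=\iprod[\ifluid]{\div w^h}{p^h}$.
\item \eqref{whH1} now holds with no remainder.
\item \eqref{phwh} follows from \cref{lem_stab_gen} with $l=1$, exactly as in your second bullet.
\end{itemize}

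\textbf{Step~3.} Your case distinction is valid. The paper reaches \eqref{infsup} slightly more directly. It first bounds $h\normIf{\nabla p^h}$ by the supremum, using the interior estimate and \eqref{whH1} with $k=1$. It then adds $g_p^{h,w}(p^h,p^h)^{1/2}$ via \cref{lem_stab_gen}.
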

\begin{proof}
{
By the inf-sup stability of the Taylor--Hood elements on $\ifluid$
there exists $w^h \in \V_f^h$ with supp$(w^h) \subset \ifluid$
and $w^h = 0$ on $\partial \ifluid$,
such that (see, e.g., \cite[Cor.~1]{Zulehner2024})
\begin{equation*}
  \gamma_1 h^2 \normIf{\nabla p^h}^2 \le \iprod[\ifluid]{\div w^h}{p^h}, \quad
  \|w^h\|_{H^k(\ifluid)} \le ch^{2-k} \normIf{\nabla p^h}, \quad
  k \in \set{0, 1}.
\end{equation*}
By extending $w^h$ by zero to $\Ofh$, we obtain~\eqref{whH1} as well as
\begin{align}\label{whph1}
  \gamma_1 h^2 \normIf{\nabla p^h}^2
  &\le \prodOf{\div w^h}{p^h} - \prodGi{w^h \cdot \n}{p^h}.
\end{align}
Hence, \eqref{phwh} follows by \cref{lem_stab_gen} using $\normOfh{\nabla p^h}^2 \lesssim \normIf{\nabla p^h}^2 + h^{-2}\ghw[p]{p^h}{p^h}$. To show the inf-sup condition~\eqref{infsup}, we combine~\eqref{whph1}  and~\eqref{whH1} for $k=1$ to get
\begin{align*}
 {h \normIf{\nabla p^h}}
    &\lesssim \sup_{w^h \in \V_f^h}
    \frac{\prodOf{\div w^h}{p^h} - \prodGi{w^h \cdot \n}{p^h}} {\normOf{\nabla w^h}}.
\end{align*}
Now, \eqref{infsup} follows by means of \cref{lem_stab_gen}.
}
\end{proof}
{\begin{remark}
  Note that, for a normalized pressure space $\P^h \subset L^2_0(\Omega_f)$,
  we could obtain an inf-sup condition for the $L^2$-norm $\|p^h\|_{\Omega_f}$
  from~\eqref{infsup} by means of a Poincaré inequality.
  In the case of FSI, however, the pressure is normalized by the
  interface condition $p^h \n = 2 \rho_f \nu_f \epsilon(v_f) \n - \sigma_s \n$, such that an additional normalization is not feasible.
  Typical constructions to show an inf-sup condition
  when outflow boundaries are present
  require that the boundary coincides with a mesh line,
  see, e.g., \cite[Lem.~2]{Zulehner2024},
  and fail in the case of unfitted boundaries.
\end{remark}}

Now we {can show} an $L^2{(H^1)}$-stability result for the pressure
under the assumption of an inverse parabolic CFL condition.
\begin{lemma}%
  \label{lem_pressstab}%
  Let $h^2 \lesssim k$
  and let Assumption~\ref{ass:F} hold with $\theta > 0$.
  Then, any
  $U^{h,n} = (v_f^{h,n}, p^{h,n}, v_s^{h,n}, u^{h,n})$
  in $\X^h$, $n = 1,\dots, N$,
  {that satisfies \eqref{eq:fullydisc} also satisfies}
  \begin{align}
    \label{pressstab}
    \sum_{n=1}^N k{h^2} \normOfh{{\nabla}p^{h,n}}^2
    \lesssim
    \CF^2 +
    \sum_{n=1}^N \bigl( \rho_f \normOf{\delta v_f^{h,n}}^2
    + k \normiii{U^{h,n}}^2 \bigr).
  \end{align}
\end{lemma}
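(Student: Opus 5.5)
The plan is to test the discrete equation \eqref{eq:fullydisc} at each time step with a fluid-only test function built from the $w^h = w^{h,n}$ provided by \cref{lem_infsup} for $p^h = p^{h,n}$, extracting $h^2\normOfh{\nabla p^{h,n}}^2$ from the pressure coupling terms. I choose $\Phi^h = (w^{h,n}, 0, 0, 0)$. The solid test components vanish, and $\xi^h = 0$. Then \eqref{eq:fullydisc} reduces to
\begin{align*}
  \rho_f \prodOf{\delta v_f^{h,n}}{w^{h,n}} + k\bigl(2\rho_f\nu_f\prodOf{\symgrad{v_f^{h,n}}}{\nabla w^{h,n}} - \prodOf{p^{h,n}}{\div w^{h,n}}\bigr) + k j^h(U^{h,n},(w^{h,n},0,0,0)) \\
  + 2k\rho_f\nu_f \ghw[v_f]{v_f^{h,n}}{w^{h,n}} = kF^n((w^{h,n},0,0,0)).
\end{align*}
By construction (see the proof of \cref{lem_infsup}), $w^{h,n}$ is supported in $\ifluid$. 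It therefore vanishes on $\Gamma^i$ and on all ghost penalty faces, so the Nitsche terms involving $w^{h,n}$ and $\ghw[v_f]{v_f^{h,n}}{w^{h,n}}$ drop out. The only remaining piece of $j^h$ is $-\prodGi{v_f^{h,n}-v_s^{h,n}}{\sigma_f(w^{h,n},0)\n_f}$, and it vanishes as well because $\nabla w^{h,n} = 0$ near $\Gamma^i$. Together with \eqref{phwh}, this gives
\begin{align*}
  \gamma_1 k h^2\normOfh{\nabla p^{h,n}}^2 \le \rho_f \prodOf{\delta v_f^{h,n}}{w^{h,n}} + 2k\rho_f\nu_f\prodOf{\symgrad{v_f^{h,n}}}{\nabla w^{h,n}} - kF^n((w^{h,n},0,0,0)) + k g_p^{h,w}(p^{h,n},p^{h,n}).
\end{align*}
Note that $\prodGi{w^{h,n}\cdot\n}{p^{h,n}} = 0$ as well. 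The last term is bounded by $k\normiii{U^{h,n}}^2$.

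Next I bound the terms on the right by Cauchy--Schwarz and Young, using \eqref{whH1}.
\begin{itemize}
  \item The viscous term satisfies $k\normOf{\nabla v_f^{h,n}}\,h\normIf{\nabla p^{h,n}} \le \epsilon k h^2\normOfh{\nabla p^{h,n}}^2 + C_\epsilon k\normiii{U^{h,n}}^2$.
  \item For the time-difference term, \eqref{whH1} with $k=0$ gives $\rho_f\normOf{\delta v_f^{h,n}}\,h^2\normIf{\nabla p^{h,n}}$. Here the inverse CFL condition $h^2\lesssim k$ is used, writing $h^2 \lesssim h\sqrt{k}$. The bound becomes $\le \epsilon k h^2\normIf{\nabla p^{h,n}}^2 + C_\epsilon \rho_f\normOf{\delta v_f^{h,n}}^2$ (absorbing $\rho_f$ into the constants).
\end{itemize}
Absorbing the $\epsilon$-terms and summing over $n$ controls everything except the right-hand-side contribution.

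For the forcing I sum first and then apply Assumption~\ref{ass:F} to the sequence $\Phi^{h,n} = (w^{h,n},0,0,0)$. In this case $\pres = 0$, $\psi = 0$ and $\xi = 0$. Also $\ET(\Phi^{h,N})^2 \lesssim \normOf{w^{h,N}}^2$, and $\normiii{\Phi^{h,n}}^2 = \rho_f\nu_f\normOfh{\nabla w^{h,n}}^2$ because the trace term vanishes. Hence the bracket is bounded by $\sum_n k h^2\normOfh{\nabla p^{h,n}}^2 + h^4\normOfh{\nabla p^{h,N}}^2$.

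\textbf{Main obstacle.} The stray endpoint term $h^4\normOfh{\nabla p^{h,N}}^2$ is the step I expect to be delicate. I would bound it by $k\,h^2\normOfh{\nabla p^{h,N}}^2$, again using $h^2\lesssim k$, so that it is part of the sum. Young's inequality then gives $\sum_n kF^n \le \epsilon\sum_n kh^2\normOfh{\nabla p^{h,n}}^2 + C_\epsilon\CF^2$. After absorption this yields \eqref{pressstab}.

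The remaining checks are that \cref{lem_stab_gen} (via \eqref{phwh}) supplies the passage from $\ifluid$ to $\Ofh$ for the left-hand side, and that the constants do not depend on $h$ or $k$.
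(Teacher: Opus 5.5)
Your overall route matches the paper's. You test \eqref{eq:fullydisc} with $(w^{h,n},0,0,0)$, bound the right-hand side using \eqref{whH1}, and use $h^2\lesssim k$ both for the $\delta v_f^{h,n}$ term and for the endpoint term $h^4\normOfh{\nabla p^{h,N}}^2$. The forcing goes through Assumption~\ref{ass:F}, and everything is absorbed at the end.

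However, one cancellation you claim is false: $\ghw[v_f]{v_f^{h,n}}{w^{h,n}}$ does not drop out. The ghost penalty does not involve the values of $w^{h,n}$ on faces, only the jumps $\jump{\partial_{\n}^l w^{h,n}}$ for $l=1,\dots,m_f$. Moreover, $\F_G^f$ contains every face $F=\cl K\cap\cl{K'}$ between a cut cell $K\subset G^h$ and an uncut fluid cell $K'\subset\ifluid$. On such a face $w^{h,n}|_K\equiv 0$, whereas $w^{h,n}|_{K'}$ vanishes on $F$ but in general has nonzero normal derivatives there. For instance, a $Q_{m_f}$ edge or interior bubble of $K'$ vanishes on $F$ but has nonzero normal derivative on it. Hence $\jump{\partial_{\n}^l w^{h,n}}=-\partial_{\n}^l w^{h,n}|_{K'}\neq 0$. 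The support property only kills the jumps on faces between two cut cells.

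The repair is routine and is exactly what the paper does. Use Cauchy--Schwarz together with the inverse estimates $\ghw[v_f]{v_f^{h,n}}{v_f^{h,n}}^{\frac12}\lesssim\normOfh{\nabla v_f^{h,n}}$ and $\ghw[v_f]{w^{h,n}}{w^{h,n}}^{\frac12}\lesssim\normOfh{\nabla w^{h,n}}\lesssim h\normIf{\nabla p^{h,n}}$. The term is then handled like your viscous term:
\begin{align*}
  2k\rho_f\nu_f\,\ghw[v_f]{v_f^{h,n}}{w^{h,n}}
  \le \epsilon\, k h^2\normOfh{\nabla p^{h,n}}^2 + C_\epsilon\, k\normiii{U^{h,n}}^2 .
\end{align*}
With this term added, the rest of your argument is correct.

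Using $\textup{supp}(w^{h,n})\subset\ifluid$ to discard the Nitsche terms and $\prodGi{w^{h,n}\cdot\n}{p^{h,n}}$ is legitimate. All of these live on $\Gamma^i\subset G^h$, where $w^{h,n}$ and $\nabla w^{h,n}$ vanish. This is a genuine simplification compared with the paper, which keeps these terms and bounds them by trace and inverse inequalities using only \eqref{whH1}. The paper's version has the advantage of not relying on the support property, which appears only in the proof of \cref{lem_infsup}, not in its statement.
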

\begin{proof}
  Due to \cref{lem_infsup}, there exists for each $p^{h,n} \in \P^h$ a function
  $w^{h,n} \in \V_f^h$ with {$\normOfh{w^{h,n}} \le c {h^2} \normOf{{\nabla}p^{h,n}}$} such that
  \begin{align}
    \label{infsup1}
    \gamma_1 {h^2 \normOfh{\nabla p^{h,n}}^2}
    &\le \prodOf{\div w^{h,n}}{p^{h,n}}
      - \prodGi{w^{h,n} \cdot \n}{p^{h,n}}
      + g_p^{h,w}(p^{h,n}, p^{h,n}).
  \end{align}
  To estimate the first two terms
  on the right-hand side,
  we choose the test function $\Phi^{h,n} := (w^{h,n}, 0, 0, 0)$
  in \eqref{eq:fullydisc}
  and apply the Cauchy--Schwarz inequality repeatedly,
  \begin{equation}
    \label{langlang}
    \begin{aligned}
      \prodOf{\nabla &\cdot w^{h,n}}{p^{h,n}} - \prodGi{w^{h,n} \cdot \n}{p^{h,n}} \\
      &= \frac{\rho_f}{k} \prodOf{\delta v_f^{h,n}}{w^{h,n}}
      + \frac{\rho_f \nu_f \gamma_N}{h} \prodGi{v_f^{h,n} - v_s^{h,n}}{w^{h,n}}
      \\[-2pt]
      &\quad- 2 \rho_f \nu_f \prodGi{\symgrad{v_f^{h,n} \n}}{w^{h,n}}
      - 2 \rho_f \nu_f \prodGi{v_f^{h,n} - v_s^{h,n}}{\symgrad{w^{h,n}} \n}
      \\[-2pt]
      &\quad+ 2 \rho_f \nu_f \prodOf{\symgrad{v^{h,n}}}{\nabla w^{h,n}}
      + 2 \rho_f \nu_f g_{v_f}^{h,w}(v_f^{h,n}, w^{h,n})
      - F_f^n(w^{h,n}) \\
      &\le \frac{\rho_f}{k} \normOf{\delta v_f^{h,n}} \normOf{w^{h,n}}
      + 2 \rho_f \nu_f \normOf{\nabla v^{h,n}}\normOf{\nabla w^{h,n}} \\[-2pt]
      &\quad+ \frac{\sqrt{\rho_f\nu_f \gamma_N}}{h^{\frac12}}
      \normGi{v_f^{h,n}-v_s^{h,n}}
      \frac{\sqrt{\rho_f \nu_f \gamma_N}}{h^{\frac12}} \normGi{w^{h,n}} \\[-2pt]
      &\quad+ 2 \rho_f \nu_f \normGi{\nabla v_f^{h,n}} \normGi{w^{h,n}}
      + 2 \rho_f \nu_f
      h^{-\frac12} \normGi{v_f^{h,n} - v_s^{h,n}}
      h^{\frac12} \normGi{\nabla w^{h,n}} \\[-2pt]
      &\quad+ 2 \rho_f\nu_f
      g_{v_f}^{h,w}(v_f^{h,n}, v_f^{h,n})^{\frac12}
      g_{v_f}^{h,w}(w^{h,n}, w^{h,n})^{\frac12}
      - F_f^n(w^{h,n}).
    \end{aligned}
  \end{equation}
  Summing over $n = 1, \dots, N$ and using Assumption~\ref{ass:F}
  for $\Phi^{h,n} = (w^{h,n}, 0,0,0)$, we obtain for the last term,
  using an inverse inequality {(see Lemma~\ref{traceineq})},
  \begin{align*}
    \sum_{n=1}^N k |F_f^n(w^{h,n})|
    &\le \CF \biggl[ \frac{\rho_f}{2} \normOf{w^{h,N}}^2
      + \sum_{n=1}^N k \bigl( \rho_f \nu_f \normOfh{\nabla w^{h,n}}^2
      + \rho_f \nu_f \gamma_N \tracenorm{w^{h,n}}^2 \bigr)
      \biggr]^{\!\frac12} \\
   &\lesssim \CF \biggl[ \frac{\rho_f}{2} \normOf{w^{h,N}}^2
     + \sum_{n=1}^N k \bigl( \normOfh{\nabla w^{h,n}}^2
     + h^{-2} {\normOfh{w^{h,n}}^2} \bigr) \biggr]^{\!\frac12}.
  \end{align*}
  Next, we use the estimates~\eqref{whH1},  {Lemma~\ref{traceineq}}, as well as the relations
  \begin{align*}
    \normGi{\nabla^l w^{h,n}}
    &\le ch^{-\frac12}{\normOfh{\nabla^l w^{h,n}}}
      \le ch^{{\frac32}-l}\normOf{{\nabla}p^{h,n}} \quad\text{for } l\in\{0,1\}, \\
    g_{v_f}^{h,w}(w^{h,n}, w^{h,n})^{\frac12}
    &\le c\normOfh{\nabla w^{h,n}}
     {{} \le c h \normOf{{\nabla}p^{h,n}}}, \\
    g_{v_f}^{h,w}(v_f^{h,n}, v_f^{h,n})^{\frac12}
    &\le c\normOfh{\nabla v_f^{h,n}}.
  \end{align*}
   With further inverse estimates,
  we obtain from \eqref{langlang}
  with $\delta v_f^{h,n} := v_f^{h,n} -  v_f^{h,n-1}$:
  \begin{equation}
    \def\vdiff{\delta v_f^{h,n}}
    \label{long1}
    \begin{aligned}
      &\sum_{n=1}^N k \left( \prodOf{\div w^{h,n}}{p^{h,n}}
        - \prodGi{w^{h,n} \cdot \n}{p^{h,n}} \right)
      \lesssim \biggl[ {h^4}\normOf{{\nabla}p^N}^2
      + \sum_{n=1}^N k {h^2} \normOf{{\nabla}p^{h,n}}^2 \biggr]^{\!\frac12} \\
      &\quad\times\biggl[ {\CF^2} + \biggl( \sum_{n=1}^N
      \rho_f \frac{h^2}{k} \normOf{\vdiff}^2
      + k \rho_f \nu_f \normOfh{\nabla v^{h,n}}^2
      + k \frac{\rho_f \nu_f \gamma_N}{h} \normGi{v_f^{h,n}-v_s^{h,n}}^2
      \biggr)^{\!\frac12} \biggr].
    \end{aligned}
  \end{equation}
  Summing \eqref{infsup1} over $n = 1, \dots, N$, multiplying with $k$ and {using that $h^2 \lesssim k$},
  we can absorb the {pressure} terms in \eqref{long1}
  into the left-hand side of \eqref{infsup1}, {which} yields the statement.
\end{proof}

{Now we can combine Theorem~\ref{theo_stab} and Lemma~\ref{lem_pressstab} to obtain a stability result for all variables.}

\begin{theorem}%
  \label{theo.stabpress}%
  Under the condition $h^2\lesssim k$ and the assumptions of \cref{theo_stab},
  it holds for any 
  $U^{h,n} = (v_f^{h,n}, p^{h,n}, u_s^{h,n}, v_s^{h,n})$ in $\X^h$,
  $n = 1, \dots, N$,
  that satisfy \eqref{eq:fullydisc}:
  \begin{equation}
    \label{est_en}
    \begin{aligned}
      \ET(U^{h,N})^2
      &+ \sum_{n=1}^N \bigl( k \normiii{U^{h,n}}^2
        + k{h^2} \normOfh{{\nabla}p^{h,n}}^2
        + \ET(\Udiff)^2 \bigr) \\
      &\lesssim \expcT \biggl[
        \ET(U^0)^2 + \E_g(U^0)^2 + \CF^2 +
        \sum_{n=1}^N k \normOsh{\nabla\vres}^2 \biggr].
    \end{aligned}
  \end{equation}
\end{theorem}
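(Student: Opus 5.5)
The plan is to combine \cref{theo_stab} with \cref{lem_pressstab}, using \eqref{eq:fullydisc} and Assumption~\ref{ass:F} to bound the term $\sum_n \A^{kh}(U^{h,n},U^{h,n-1};U^{h,n})$ that appears on the right-hand side of \eqref{stab_est0}.

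\emph{Energy part.} Since $U^{h,n}$ satisfies \eqref{eq:fullydisc}, we have $\A^{kh}(U^{h,n},U^{h,n-1};U^{h,n}) = kF^n(U^{h,n})$. Apply Assumption~\ref{ass:F} with $\Phi^{h,n}:=U^{h,n}$, so that $\xi^{h,n}=p^{h,n}$, $\phi_s^{h,n}=v_s^{h,n}$, $\psi^{h,n}=u^{h,n}$ and $\pres=\vres$. This gives
\[
\sum_{n=1}^N kF^n(U^{h,n})\lesssim \CF\Bigl[\ET(U^{h,N})^2+\sum_{n=1}^N k\bigl(\normiii{U^{h,n}}^2+h^2\normOf{\nabla p^{h,n}}^2+\normOsh{v_s^{h,n}}^2+\normOsh{\nabla u^{h,n}}^2+\normOsh{\nabla\vres}^2\bigr)\Bigr]^{\frac12}.
\]
Young's inequality with a small parameter $\epsilon$ bounds this by $C_\epsilon\CF^2+\epsilon[\dots]$.

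\emph{Pressure part.} Add a small multiple $\delta$ of \eqref{pressstab} to \eqref{stab_est0}. The right-hand side of \eqref{pressstab} consists of $\CF^2$ and of $\sum_n(\rho_f\normOf{\delta v_f^{h,n}}^2+k\normiii{U^{h,n}}^2)$. The latter is dominated by the left-hand side of \eqref{stab_est0}, since $\ET(\Udiff)^2\ge\frac{\rho_f}{2}\normOf{\delta v_f^{h,n}}^2$. Choosing $\delta$ small relative to $e^{-cT}$, these terms can be absorbed. The left-hand side then controls $\sum_n kh^2\normOfh{\nabla p^{h,n}}^2$, and hence the term $\epsilon\sum_n kh^2\normOf{\nabla p^{h,n}}^2$ coming from the Young splitting.

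\emph{Remaining terms.} The terms $\epsilon\,\ET(U^{h,N})^2$ and $\epsilon\sum_n k\normiii{U^{h,n}}^2$ are absorbed directly into the left-hand side. The term $\epsilon\sum_n k\normOsh{\nabla\vres}^2$ is already part of the stated right-hand side. The terms $\epsilon\sum_n k(\normOsh{v_s^{h,n}}^2+\normOsh{\nabla u^{h,n}}^2)$ are bounded by $\epsilon\, c\sum_n k\,\ET(U^{h,n})^2$. These are not absorbed at the final time but are handled by a discrete Gr\"onwall argument. To do this, I would rerun the argument of \cref{theo_stab} with final index $m\le N$ in place of $N$. The bound via Assumption~\ref{ass:F} then has to hold for partial sums, which is true for the model functional, or one uses $\Phi^{h,n}=0$ for $n>m$. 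Since Gr\"onwall already produces the factor $\expcT$, the extra $O(k)\,\ET(U^{h,n})^2$ contributions only change the constant $c$.

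\emph{Main obstacle.} The delicate step is the ordering of constants. The factor $\expcT$ multiplies the $\A^{kh}$ term, so $\epsilon$ must be chosen as roughly $e^{-cT}$ times a fixed constant so that absorption is still possible; the resulting $C_\epsilon\CF^2$ is then of size $e^{cT}\CF^2$, which is consistent with the right-hand side of \eqref{est_en} after enlarging $c$. A cleaner route is to carry the $\epsilon$-terms inside the per-step estimate \eqref{longbefGronw} before applying Gr\"onwall, so that all absorption happens at the level where the constants do not depend on $T$.
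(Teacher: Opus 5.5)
Your proposal follows the paper's proof: combine \cref{theo_stab} with \cref{lem_pressstab}, test \eqref{eq:fullydisc} with $\Phi^h=U^{h,n}$, bound $\sum_n kF^n(U^{h,n})$ via Assumption~\ref{ass:F} and Young's inequality, absorb, and close with a discrete Gr\"onwall argument for the $\normOsh{v_s^{h,n}}^2$ and $\normOsh{\nabla u^{h,n}}^2$ terms; the paper's split $\frac{c_0}{2}\CF^2+\frac{1}{2c_0}(\cdots)$ is your $\epsilon$-splitting, with the factor $\expcT$ hidden in $c_0$. You are right, in a point the paper glosses over, that the Gr\"onwall step needs Assumption~\ref{ass:F} for partial sums up to each $m\le N$ (this holds for the functionals the theorem is applied to), but drop your fallback of setting $\Phi^{h,n}=0$ for $n>m$: it gives $\zeta^{h,m+1}=k^{-1}u^{h,m}$, so the bracket contains $k^{-1}\normOsh{\nabla u^{h,m}}^2$, which cannot be absorbed without putting a factor $k^{-1}$ in front of $\CF^2$.
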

\begin{proof}
  From
  \cref{theo_stab,lem_pressstab},
  we have for some constant $c_0 > 1$:
  \begin{equation}
    \label{stat1}
    \begin{aligned}
      &\frac1{c_0} \biggl[ \ET(U^{h,N})^2
      + \sum_{n=1}^N \bigl( k \normiii{U^{h,n}}^2
        + k{h^2} \normOfh{{\nabla}p^{h,n}}^2 + \ET(\Udiff)^2 \bigr) \biggr] \\
      &\ \le \sum_{n=1}^N \left( \A^{kh}(U^{h,n}, U^{h,n-1}; U^{h,n})
        +  k\normOs{\nabla\vres}^2 \right)
      + \CF^2 + \ET(U^0)^2 + \E_g(U^0)^2.
    \end{aligned}
  \end{equation}
{%
  Testing \eqref{eq:fullydisc} 
  with $\Phi^h = U^{h,n}$,
  using Assumption~\ref{ass:F} and Young's inequality, we have
  \begin{align}
  \begin{split}
    \label{eq:f}
    &\sum_{n=1}^N \A^{kh}(U^{h,n}, U^{h,n-1}; U^{h,n})
    = \sum_{n=1}^N kF^n(U^{h,n})
    \le \frac{c_0}2 \CF^2 + \frac1{2 c_0} \bigg( \ET(U^{h,N})^2 + \\
    &\qquad\quad \sum_{n=1}^N k \bigl(
    \normiii{U^{h,n}}^2 +{h^2} \normOf{{\nabla}p^{h,n}}^2
    + \normOsh{v_s^{h,n}}^2 + { \normOsh{\nabla u^{h,n}}^2} +\normOsh{\nabla\vres}^2 \bigr) \bigg).
    \end{split}
  \end{align}
  Combining \eqref{stat1} and \eqref{eq:f} yields
  \begin{equation}
    \begin{aligned}
      &\ET(U^{h,N})^2
      + \sum_{n=1}^N \bigl( k \normiii{U^{h,n}}^2
      + k {h^2}\normOfh{{\nabla}p^{h,n}}^2 + \ET(\Udiff)^2 \bigr) \\
      &\,\,\lesssim \ET(U^0)^2 + \E_g(U^0)^2 + \CF^2
      + \sum_{n=1}^N k
      \bigl( \normOsh{v_s^{h,n}}^2 + { \normOsh{\nabla u^{h,n}}^2} + \normOsh{\nabla\vres}^2 \bigr).
    \end{aligned}
  \end{equation}
  A discrete Grönwall inequality for $\normOs{v_s^{h,n}}^2$ and ${ \normOsh{\nabla u^{h,n}}^2}$
  yields the statement.}
\end{proof}

\begin{remark}
  In \cref{theo.stabpress} we have not assumed that $\vres = 0$ ,
  which holds for the solution
 of \cref{prob:fullydisc}.
  Under this assumption, the last term on the right-hand side
  of \eqref{est_en} would vanish.
  We state \cref{theo.stabpress} in the more general form
  as this allows us to apply it directly
  in the following a priori error analysis.
\end{remark}

\section{A priori error estimates}
\label{sec_apriori}
{In this section, we establish our final a priori error estimates. We start with the introduction of a projection operator for the solid velocity $v_s$, which will help us in the error estimates to get rid of a problematic interface term. Then, the overall argumentation is as usual: we start with a specific Galerkin orthogonality and
then derive consistency and interpolation error estimates. Together with the stability result of the previous section, we then obtain the final a priori error estimates in time and space.}

For the error analysis, we assume that the subdomains $\Omega_f$ and $\Omega_s$
as well as the interface $\Gamma^i$ are sufficiently smooth
and that with $\H_i := H^{m_i+1}(\Omega_i)$ for $i \in \set{f, s}$
the solution $U = (v_f, p, v_s, u)$ of \cref{weak_form} satisfies
\begin{equation}
	\label{regularity_assumption}
	\begin{aligned}
	  v_f &\in L^\infty(I, \H_f) \cap H^1(I, \H_f) \cap H^2(I, L^2(\Omega_f)), &
	  p &\in L^\infty(I, H^{m_f}(\Omega_f)), \\
	  v_s &\in L^\infty(I, \H_s) \cap H^1(I, \H_s) \cap H^2(I, L^2(\Omega_f)), \\
	  u &\in L^\infty(I, \H_s) \cap H^1(I, \H_s) \cap H^2(I, H^{1}(\Omega_s)),
	\end{aligned}
\end{equation}
where $m_s \ge 1$, $m_f \ge 2$ are the polynomial degrees
of the solid respectively fluid finite element spaces. {Because each component of $U$ is at least $H^2$-regular in space, nodal interpolation of $U$ is well-defined.}
It is easy to see that $U$ fulfills the weak formulation
\begin{equation}\label{cont_time_system}
  \rho_f \prodOf{\partial_t v_f}{\phi_f^h} + \rho_s \prodOs{\partial_t v_s}{\phi_s^h} + A^h(U, \Phi^h) = F^h(\Phi^h)
\end{equation}
for all $\Phi^h \in \X^h$ at time $t \in I$.
As the physical subdomains
are strictly contained in the computational subdomains,
we need to extend the components of the solution $U$
to $\Ofh$ and $\Osh$, respectively.
For any $w \in L^\infty(I, H^r(\Omega_i))$,
there exist extensions
$E_i\: H^{r}(\Omega_i) \to H^r(\Omega_i^\T)$
with $E_i|_{\Omega_i} = \textup{id}_{\Omega_i}$
for $i \in \set{f, s}$ that satisfy
\begin{align}
  \label{extension_space_stability}
  \norm[H^r(\Omega_i^\T)]{E_i w} \le C \norm[H^{r}(\Omega_i)]{w}.
\end{align}
If
$w \in {L^\infty}(I, H^{r+1}(\Omega_i)) \cap {W^{1,\infty}}(I, H^r(\Omega_i))$,
there exist extensions such that
\begin{align}
  \partial_t E_iw &= E_i \partial_t w \quad\text{in } \Omega_i^\T, &
  \label{extension_time_stability}
  \norm[H^r(\Omega_i^\T)]{\partial_t E_i w}
  \le C \norm[H^r(\Omega_i)]{\partial_t w}.
\end{align}
A proof of \eqref{extension_space_stability}
is given in \cite[Chap.~VI, §3.1]{stein1970singular}.
For \eqref{extension_time_stability}
we refer to \cite[Lem.~3.2]{Lehrenfeld2018}
and \cite[Lem.~3.3]{Lehrenfeld2018}.
For ease of notation, we will usually omit the operator $E_i$
when it is clear
that it is needed.

\subsection{{Projection and Interpolation operators}}
{Following~\cite{BURMAN2014FSI},
we introduce a special projection operator for the solid velocity.
To this end, we partition the set of interface cells and their neighbors
\begin{align*}
  \T_\Gamma^h := \defset{T \in \T_s^h}{\cl T \cap \cl G_h \ne \0}
\end{align*}
 in such a way into non-overlapping patches $P_j, j=1,\ldots,M$ that $|P_j|={\cal O}(h^2)$ and for the partition $\Gamma_j^i:=\Gamma^i\cap P_j$ of $\Gamma^i$ it holds  $|\Gamma_j^i|={\cal O}(h)$ (see \cref{fig:lem.proj}). Now, we construct the projection in such a way that
\begin{align}
   \label{intproj}
  \int_{\Gamma_j^i} (v_s - \pi_h v_s)\cdot \n_f \ds = 0, \quad j = 1, \dots, M.
\end{align}

\begin{figure}[t]
\begin{minipage}[c]{.3\linewidth}
\centering
 \begin{tikzpicture}[font = \scriptsize, scale = 1.5]
    \draw[color=black, thick] (-1, -1) rectangle (1, 1);
    \draw[color=black, fill=red!10, thick] (0, 0) circle (0.75);
    \fill[pattern=north east lines, pattern color=Fuchsia] (-0.6, -0.8) rectangle (0.6, 0.8);
    \fill[pattern=north east lines, pattern color=Fuchsia] (-0.8, -0.6) rectangle (0.8, 0.6);

   \draw[color=red!10, fill=red!10] (-0.2, -0.4) rectangle (0.2, 0.4);
   \draw[color=red!10, fill=red!10] (-0.4, -0.2) rectangle (0.4, 0.2);
   \draw[step=0.2,black!60,thin] (-1, -1) grid (1, 1);
   \draw[color=red!10, fill=red!10] (0, 0) circle (0.13);
   \draw (0, 0) node {\scriptsize \textcolor{red}{$\Omega_s$}};
   \foreach \x in {-1, 1}
      \foreach \y in {-1, 1}
      {
          \draw[color=Bittersweet, thick] (0, \y*0.4) rectangle (\x*0.2, \y*0.8);
          \draw[color=Bittersweet, thick] (\x*0.2, \y*0.2) rectangle (\x*0.4, \y*0.8);
          \draw[color=Bittersweet, thick] (\x*0.4, \y*0.2) rectangle (\x*0.8, \y*0.4);
          \draw[color=Bittersweet, thick] (\x*0.4, \y*0) rectangle (\x*0.8, \y*0.2);
          \draw[color=Bittersweet, thick] (\x*0.4, \y*0.8) -- (\x*0.6, \y*0.8);
          \draw[color=Bittersweet, thick] (\x*0.6, \y*0.6) -- (\x*0.8, \y*0.6);
          \draw[color=Bittersweet, thick] (\x*0.6, \y*0.6) -- (\x*0.6, \y*0.8);
          \draw[color=Bittersweet, thick] (\x*0.8, \y*0.4) -- (\x*0.8, \y*0.6);
      }
  \draw[color=Bittersweet, fill=white, thick] (-0.95, 0.7) rectangle (-0.7, 0.95);
  \draw (-0.825, 0.825) node {\scriptsize \textcolor{Bittersweet}{$P_j$}};
  \draw[color=white, fill=white] (0.8, -0.8) circle (0.13);
  \draw (0.8, -0.8) node {\scriptsize \textcolor{Fuchsia}{$\T_\Gamma^h$}};
 \end{tikzpicture}
\end{minipage}
\begin{minipage}[c]{.28\linewidth}
\centering
 \begin{tikzpicture}[font = \scriptsize, scale = 1.3]
    \draw[color=red!10, fill=red!10] (-0.1, -0.1) rectangle (1.1068, 1.5);
    \draw[black, very thick, name path=A] plot [smooth, tension = 0.7] coordinates {(1.1068, -0.1) (1.1721, 0.0) (1.3425, 0.3) (1.4369, 0.5) (1.5528, 0.8) (1.6142, 1.0) (1.7165, 1.5)};
    \draw[red!10, name path=B] (-0.1, 1.5) -- (1.7165, 1.5);
    \tikzfillbetween[of=A and B]{red!10};
    \draw[step=1.0,black!60,thin] (-0.1, -0.1) grid (2.1, 1.5);
    \draw[color=Bittersweet, very thick] (0, 0) rectangle (2, 1);
    \draw[magenta, very thick, name path=A] plot [smooth, tension = 0.7] coordinates {(1.1721, 0.0) (1.3425, 0.3) (1.4369, 0.5) (1.5528, 0.8) (1.6142, 1.0)};
    \draw (0.75, 0.5) node {\scriptsize \textcolor{Bittersweet}{$P_j$}};
    \draw (1.5, 0.2) node {\scriptsize \textcolor{magenta}{$\Gamma_j^i$}};
    \fill[black] (1.5, 0.5) circle (1.5pt);
    \draw[anchor=south west] (1.5, 0.5) node {\textcolor{black}{$x_j$}};
    \draw (0.25, 1.25) node {\scriptsize \textcolor{red}{$\Omega_s$}};
    \draw[|-|] (2.25, 0) -- (2.25, 1);
    \draw (2.4, 0.5) node {\scriptsize $h$};
 \end{tikzpicture}
\end{minipage}
\begin{minipage}[c]{.4\linewidth}
 \centering
 \includegraphics[scale=0.1]{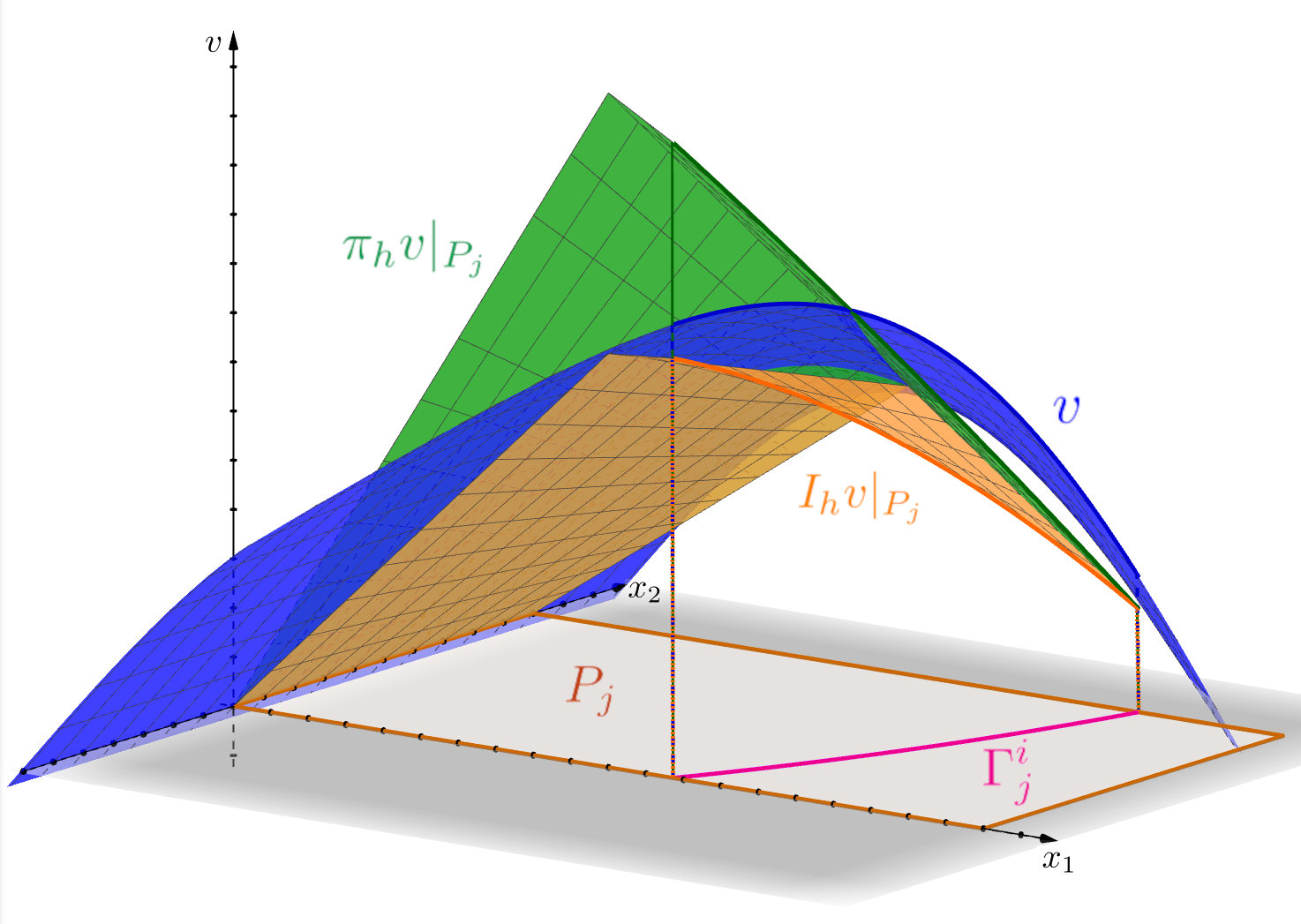}
\end{minipage}

\caption{{Visualization of the set $\T_\Gamma^h$
  and an example of a partition into patches $P_1,\dots,P_M$ (left),
  a single patch $P_j$ with the interface part $\Gamma_j^i$ and a suitable vertex $x_j$ (here: the midpoint of the left cell) (middle)
  and a visualization of the projection $\pi_h v$ on a patch $P_j$
  for a scalar function $v$ (right) used in \cref{lem.proj}.}}
\label{fig:lem.proj}
\end{figure}

\begin{lemma}\label{lem.proj}
  Let $\T_\Gamma^h$ be partitioned into patches $P_j$,
  $j = 1, \dots, M$, as specified above.
  Let $h$ be sufficiently small and $\Gamma^i$ sufficiently smooth.
  Then, there exists a projection operator
  $\pi_h\colon H^1(\Omega_s) \to \V_s^h$,
  such that~\eqref{intproj} holds and,
  for sufficiently regular functions $v_s$,
\begin{align}\label{proj1}
  \norm{v_s-\pi_h v_s}_{H^l(\Omega)}
  &\le ch^{r-l} \norm[H^{r}(\Omega)]{v_s}
  &&\text{for} \quad 0 \le l \le 1, \quad 2 \le r, \\
  \norm[H^l(\bd\Omega)]{v_s - \pi_h v_s}
  &\le ch^{r-l-\frac12} \norm[H^{r}(\Omega)]{v_s}
  &&\text{for} \quad 0 \le l \le 2, \quad 2 \le r.\label{proj2}
  \end{align}
  \end{lemma}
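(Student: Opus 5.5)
The plan is to build $\pi_h$ as a local correction of a standard interpolant, following the construction in \cite{BURMAN2014FSI}. Let $I_h\colon H^2(\Omega_s^\T)\to\V_s^h$ be the nodal (or a Scott--Zhang type, to also cover $H^1$ arguments) $Q_{m_s}$ interpolant, applied to the extension $E_s v_s$. Standard approximation theory on the quasi-uniform quadrilateral mesh gives $\norm{v_s-I_hv_s}_{H^l(\Omega_s^\T)}\lesssim h^{r-l}\norm[H^r]{v_s}$. For each patch $P_j$ we fix a vertex $x_j$ in the interior of $P_j$ (as in \cref{fig:lem.proj}). We then let $\varphi_j\in V_s^{h,m_s}$ be the nodal basis function associated with $x_j$; its support is contained in $P_j$, and the supports are pairwise disjoint for different $j$ since the patches do not overlap. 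We set
\begin{align*}
  \pi_h v_s := I_h v_s + \sum_{j=1}^M \alpha_j \varphi_j \n_j,
  \qquad
  \alpha_j := \frac{\int_{\Gamma_j^i}(v_s-I_hv_s)\cdot\n_f\ds}{\int_{\Gamma_j^i}\varphi_j\,\n_j\cdot\n_f\ds},
\end{align*}
where $\n_j$ is a fixed unit vector, for instance $\n_f$ at some point of $\Gamma_j^i$. Because the supports are disjoint, testing on $\Gamma_j^i$ sees only the $j$-th correction, and \eqref{intproj} holds by construction.

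\textbf{Key step 1: the denominator is bounded below, $\int_{\Gamma_j^i}\varphi_j\,\n_j\cdot\n_f\ds\gtrsim h$.} Two facts are needed. First, for smooth $\Gamma^i$ and small $h$, the normal varies by $O(h)$ over $\Gamma_j^i$, so $\n_j\cdot\n_f\ge 1/2$. Second, $\varphi_j$ must be bounded below on a portion of $\Gamma_j^i$ of length $\sim h$. This is the main obstacle: a Lagrange basis function changes sign for $m_s\ge2$ and can be small along the curve. I would handle it by choosing the patches and vertices so that $\Gamma_j^i$ passes through a region where $\varphi_j\ge c>0$. A cleaner alternative is to replace $\varphi_j$ by a nonnegative bilinear ($Q_1\subset Q_{m_s}$) hat function on the cells of $P_j$, centred at a vertex adjacent to $\Gamma_j^i$. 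A hat function is $\ge 1/4$ on a neighbourhood of size $\sim h$, and the curve segment crosses this neighbourhood with length $\sim h$ because $|\Gamma_j^i|=\mathcal O(h)$; this gives the lower bound.

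\textbf{Key step 2: the numerator.} Cauchy--Schwarz and the trace inequality of \cref{traceineq} applied on the cells of $P_j$ give
\[
\Bigl|\int_{\Gamma_j^i}(v_s-I_hv_s)\cdot\n_f\Bigr|\le |\Gamma_j^i|^{1/2}\norm[\Gamma_j^i]{v_s-I_hv_s}\lesssim h^{1/2}\bigl(h^{-1/2}\norm[P_j]{e}+h^{1/2}\norm[P_j]{\nabla e}\bigr)
\]
with $e=v_s-I_hv_s$. Together with step 1 this yields $|\alpha_j|\lesssim h^{-1}\bigl(\norm[P_j]{e}+h\norm[P_j]{\nabla e}\bigr)$.

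\textbf{Key step 3: sum the corrections.} Scaling gives $\norm[H^l]{\varphi_j}\lesssim h^{1-l}$, so
\[
\norm[H^l]{\alpha_j\varphi_j\n_j}^2\lesssim h^{-2l}\bigl(\norm[P_j]{e}^2+h^2\norm[P_j]{\nabla e}^2\bigr).
\]
Summing over the disjoint patches and using the interpolation estimates for $e$ gives $\norm[H^l]{\sum_j\alpha_j\varphi_j\n_j}\lesssim h^{r-l}\norm[H^r]{v_s}$, and the triangle inequality then yields \eqref{proj1}. For \eqref{proj2} I would argue cellwise: on each cell, apply the trace inequality of \cref{traceineq} to $\nabla^l(v_s-\pi_hv_s)$, whose restriction to the cell is smooth; for $l=2$ use broken second derivatives. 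Combining this with inverse estimates for the discrete correction and the interpolation error bounds in $H^{l}$ and $H^{l+1}$ on cells yields the extra $h^{-1/2}$.
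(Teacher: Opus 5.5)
Your construction and estimates match the paper's: $\pi_h v_s$ is $I_h v_s$ plus, on each patch $P_j$, one basis function at an interior vertex $x_j$ multiplied by $\alpha_j$ and a fixed normal. The coefficient $\alpha_j$ is the same quotient that enforces \eqref{intproj}, it satisfies $|\alpha_j|\lesssim h^{-1}\|v_s-I_hv_s\|_{P_j}+\|\nabla(v_s-I_hv_s)\|_{P_j}$ by Cauchy--Schwarz and the trace inequality, and the correction is then handled by local scaling. One point needs rephrasing: the bound $\int_{\Gamma_j^i}\varphi_j\ds\gtrsim h$ is a requirement on how $x_j$ and a nonnegative $\varphi_j$ are placed relative to $\Gamma_j^i$, which the paper simply assumes; it does not follow from $|\Gamma_j^i|=\mathcal O(h)$, since that only bounds the length from above.
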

  \begin{proof}
    The construction of $\pi_h$ is given in~\cite{BeckerBurmanHansbo2009}
    for the case of $P_1$ finite elements on triangular meshes and can
    easily be adapted to $Q_r$ finite elements on quadrilateral meshes.
    The idea is to set $\pi_h v_s|_{P_j} = I_h v_s|_{P_{j}} + y_j \phi_j$,
    where $I_h$ is the standard nodal interpolation,
    $\phi_j$ is a shape function belonging to an interior vertex $x_j$ of $P_j$
    such that $c_1 h \le \int_{\Gamma_j^i} \phi_j \ds \le c_2 h$
    with $c_1, c_2>0$ (see Fig.~\ref{fig:lem.proj} middle) and $y_j =\alpha_j \n_f(x_j) \in \R^2$, where $\n_f(x_j)$ is the outer fluid normal vector at $x_j$. The constant $\alpha_j$ is chosen as
  \begin{align*}
    \alpha_j=\frac{\int_{\Gamma_j^i} (v - I_h v)(s)\cdot \n_f(s) \ds}{\int_{\Gamma_j^i} \phi_j(s) \n_f(x_j)\cdot \n_f(s) \ds}.
  \end{align*}
Since $\Gamma^i$ is assumed to be smooth and $\|\n_f\|=1$, the term $\n_f(x_j)\cdot \n_f(s)$ in the denominator is bounded below by a constant $c\tsb{min}>0$ for a sufficiently fine mesh. 

Eq.~\eqref{intproj} follows immediately from the definition of $\alpha_j$.
  The estimates~\eqref{proj1} and~\eqref{proj2}
  follow from the corresponding estimates for the nodal interpolant and
  standard estimates for the shape function $\phi_j$:
  \begin{align*}
  |\alpha_j| \leq c\tsb{min}^{-1}&\frac{\bigl|\int_{\Gamma_j^i} (v-I_h v)\cdot\n_f \ds \bigr|}{|\int_{\Gamma_j^i} \phi_j(s) \ds|} \lesssim h^{-1} \|v-I_h v\|_{L^1(\Gamma_j^i)} \lesssim h^{-1/2} \|v-I_h v\|_{L^2(\Gamma_j^i)} \\
  &\qquad\qquad\qquad\quad\lesssim h^{-1} \|v-I_h v\|_{P_j} + \|\nabla (v-I_h v)\|_{P_j} \leq ch^{r-1} \|v_s\|_{H^r(P_j)}.
  \end{align*}
  For further details, we refer to~\cite[Section 5.1]{BeckerBurmanHansbo2009}.
  \end{proof}
}
{For the remaining variables, we use the standard Lagrangian interpolant $I_h$
to $\T_h^f$ respectively $\T_h^s$}.
We introduce the notation
\begin{align*}
  U^n
  &:= {E U(t_n) := (E_f v_f(t_n), E_f p(t_n), E_s v_s(t_n), E_s u(t_n))}, \\
  \e_U^n
  &:= U^n - U^{h,n}, \quad
    \etab_U^n:= U^n - \I_h U^n, \quad
    \xib_{h,U}^n := \I_h U^n - U^{h,n}
\end{align*}
for $n \ge 1$, where $\I_h U := (I_h v_f, I_h p, {\pi_h} v_s, I_h u)$.
Moreover, we set
$\e_U^0 = \etab_U^0 = \xib_{h,U}^0 = 0$,
which is possible by the choice of initial data in \cref{prob:fullydisc},
namely
$v_f^{h,0} := E_f v_f^0$, $u^{h,0} := E_s u^0$ and $v_s^{h,0} := E_s v_s^0$.
{Combining \cref{lem.proj} with
well-known interpolation estimates for $I_h$, we have} for sufficiently regular functions:
\begin{align}
  \label{interpol1}
  \norm{U-I_h U}_{H^l(\Omega)}
  &\le ch^{ r-l} \norm[H^{r}(\Omega)]{U}
  &&\text{for} \quad 0 \le l \le 1, \quad 2 \le r, \\
  \norm[H^l(\bd\Omega)]{U - I_h U}
  &\le ch^{r-l-\frac12} \norm[H^{r}(\Omega)]{U}
  &&\text{for} \quad 0 \le l \le 2, \quad 2 \le r.
\end{align}

\subsection{Galerkin Orthogonality}

As a starting point for the error estimation,
we subtract \eqref{eq:fullydisc} from \eqref{cont_time_system}
to obtain
for all $\Phi^h = (\phi_f^h, \xi^h, \phi_s^h, \psi^h)$ in $\X^h$
\begin{equation}
  \label{Galerkin}
  \begin{aligned}
    \A^{kh}(\e_U^n, \e_U^{n-1}; \Phi^h)
    &= k \E_c^n(U^n, U^{n-1}; \Phi^h),
    \quad \text{where} \\
    \E_c^n(U^n, U^{n-1}; \Phi^h)
    &= \rho_f \prodOf{k^{-1} \delta v_f^n - \partial_t v_f(t_n)}{\phi_f^h}
    + S^h(U^n, \Phi^h) \\[-2pt]
    &+ \rho_s \prodOs{k^{-1} \delta v_s^n - \partial_t v_s(t_n)}{\phi_s^h}
    + k^{-1} \rho_s \ghw[v_s]{\delta v_s^n}{\phi_s^h}.
  \end{aligned}
\end{equation}
We further split \eqref{Galerkin} into
interpolation error and discrete error parts,
\begin{equation}
  \label{Galerkin2}
  \A^{kh}(\xib_{U,h}^n, \xib_{U,h}^{n-1}; \Phi^h)
  = k \E_c(U^n, U^{n-1}; \Phi^h) - k \E_i^n(U^n, U^{n-1}, \Phi^h)
  \quad \forall \Phi^h \in \X^h,
\end{equation}
where the interpolation error is defined as
\begin{align}
  \label{interpolError}
  \E_i^n(U^n, U^{n-1}, \Phi^h)
  := k^{-1} \A^{kh}(\etab_U^n, \etab_U^{n-1}; \Phi^h).
\end{align}
We will apply the stability result of \cref{theo.stabpress} to \eqref{Galerkin2},
which will be the basis of the error estimate.
Let us first estimate the consistency and interpolation errors.

\subsection{Consistency and interpolation errors}

Subsequently we will use the notation
$I_n := (t_{n-1}, t_n)$ for the $n$-th time interval.
{We let $m_{fs} := \min\set{m_f, m_s}$.}

\begin{lemma}[Consistency error]%
  \label{lem.consistency}%
  {Let the solution $U$ to Problem~\ref{weak_form} satisfy the regularity assumption~\eqref{regularity_assumption} and let $U^n:=U(t_n)$ for $n=0,\ldots,N$.}
Then, it holds for ${\Phi^{h,n} = (\phi_f^{h,n}, \xi^{h,n}, \phi_s^{h,n}, \psi^{h,n})} \in \X^h, n = 1, \dots, N$:
  \begin{align*}
    \sum_{n=1}^N k
    &\E_c^n(U^n, U^{n-1}; \Phi^{h,n}) \\[-4.5\jot]
    &\le c k \left( \norm[L^2(I,\Omega_f)]{\partial_t^2 v_f}
      + \norm[L^2(I,\Omega_s)]{\partial_t^2 v_s} \right)
      \biggl( \sum_{n=1}^N k \bigl( \normOf{\phi_f^{h,n}}^2
      + \normOs{\phi_s^{h,n}}^2 \bigr) \biggr)^{\!\frac12}.
  \end{align*}
\end{lemma}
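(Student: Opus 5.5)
The plan is to show that the two ghost-penalty contributions in $\E_c^n$ vanish identically, so that only the backward-Euler truncation errors remain, and then to bound those by Taylor's formula with integral remainder and the Cauchy--Schwarz inequality.

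\emph{Step 1: the ghost-penalty terms vanish.} Here $U^n=EU(t_n)$ denotes the extended exact solution.
\begin{itemize}
\item By \eqref{regularity_assumption} and \eqref{extension_space_stability}, $E_f v_f(t_n)\in H^{m_f+1}(\Ofh)$ and $E_f p(t_n)\in H^{m_f}(\Ofh)$. Likewise $E_s u(t_n), E_s v_s(t_n)\in H^{m_s+1}(\Osh)$.
\item In $\ghw[v_f]{\fcdot}{\fcdot}$ and $\ghw[u]{\fcdot}{\fcdot}$ only derivatives of order $l\le m_i$ appear. For $w\in H^{m_i+1}$, the derivative $\partial_{\n}^l w$ lies in $H^1$ on the patch $K\cup K'$. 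It therefore has a well-defined trace on each interior face $F$, which is the same from both sides. Hence $\jump{\partial_{\n}^l w}=0$ in $L^2(F)$.
\item The same argument covers $\ghw[p]{\fcdot}{\fcdot}$, because it uses only $l\le m_f-1$ and $p\in H^{m_f}$.
\item It also covers $\ghw[v_s]{\delta v_s^n}{\fcdot}$. Here $\delta v_s^n=E_s v_s(t_n)-E_s v_s(t_{n-1})\in H^{m_s+1}(\Osh)$ by linearity of the extension.
\end{itemize}
Consequently $S^h(U^n,\Phi^{h,n})=0$ and $k^{-1}\rho_s\ghw[v_s]{\delta v_s^n}{\phi_s^{h,n}}=0$. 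The only care needed is to confirm that the extension preserves enough regularity on the whole computational domain; this follows from \eqref{extension_space_stability}.

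\emph{Step 2: Taylor expansion in time.} For $i\in\set{f,s}$, Taylor's formula with integral remainder gives
\begin{equation*}
k^{-1}\delta v_i^n-\partial_t v_i(t_n) = -k^{-1}\int_{I_n}(s-t_{n-1})\,\partial_t^2 v_i(s)\ds .
\end{equation*}
The remainder is taken in $L^2(\Omega_i)$, which is legitimate since $v_i\in H^2(I,L^2)$. Applying Cauchy--Schwarz in time, with $\int_{I_n}(s-t_{n-1})^2\ds=k^3/3$, yields
\begin{equation*}
\norm[\Omega_i]{k^{-1}\delta v_i^n-\partial_t v_i(t_n)}\le c\,k^{\frac12}\norm[L^2(I_n,\Omega_i)]{\partial_t^2 v_i}.
\end{equation*}
Only the physical domains $\Omega_f,\Omega_s$ enter here, so the extension plays no role in this step.

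\emph{Step 3: summation.} Multiply by $k$, sum over $n$, and apply Cauchy--Schwarz first in space and then in the discrete sum. This gives
\begin{equation*}
\sum_{n=1}^N k\,\E_c^n(U^n,U^{n-1};\Phi^{h,n}) \le \Bigl(\sum_{n=1}^N k\cdot c\,k\,\norm[L^2(I_n,\Omega_f)]{\partial_t^2 v_f}^2\Bigr)^{\frac12}\Bigl(\sum_{n=1}^N k\,\normOf{\phi_f^{h,n}}^2\Bigr)^{\frac12} + (\text{solid analogue}).
\end{equation*}
The first factor equals $c\,k\,\norm[L^2(I,\Omega_f)]{\partial_t^2 v_f}$, and similarly for the solid term. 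Combining the two terms gives the claimed bound.

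The main obstacle is Step 1. Step 3 is routine once the temporal remainder is isolated.
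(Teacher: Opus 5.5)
Your proof is correct and follows the paper's argument. The ghost-penalty contributions $S^h(U^n,\Phi^{h,n})$ and $k^{-1}\rho_s\, g_{v_s}^{h,w}(\delta v_s^n,\phi_s^{h,n})$ vanish because the extended exact solution is regular enough that the derivative jumps are zero, and the backward Euler truncation error $-\int_{I_n}(t-t_{n-1})\,\partial_t^2 v_i\,\mathrm{d}t$ is bounded by Cauchy--Schwarz in time, in space and in the discrete sum; the paper derives that error by integrating by parts against $t-t_{n-1}$, which is your Taylor remainder, and otherwise you only reorder the steps and spell out the trace argument it gives in one line.
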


\begin{proof}
  For the first part of the consistency error,
  we have {for $i \in \set{f, s}$}
  \begin{equation}
    \label{consdtv}
    \begin{aligned}
      \delta v_i^n
      &- k \partial_t v_i(t_n)
      = \int_{I_n} [\partial_t v_i(t) - \partial_t v_i(t_n)] \dt {{}= \int_{I_n} [\partial_t v_i(t) - \partial_t v_i(t_n)]\cdot 1 \dt.}
          \end{aligned}
  \end{equation}
  {We use that a primitive function to $1$ is given by $(t-t_{n-1})$ and apply integration by parts}, followed by a Cauchy--Schwarz inequality in time:
  \begin{equation*}
      \begin{aligned}
  {\int_{I_n} [\partial_t v_i(t)} &{- \partial_t v_i(t_n)]\cdot 1 \dt =} -\int_{I_n} (t - t_{n-1}) \partial_t^2 v_i(t) \dt \\
      &\le \biggl( \int_{I_n} (t - t_{n-1})^2 \dt \biggr)^{\!\frac12}
      \biggl( \int_{I_n} (\partial_t^2 v_i(t))^2 \dt \biggr)^{\!\frac12}
      {=\frac{1}{\sqrt{3}}} k^{\frac32}
      \biggl( \int_{I_n} (\partial_t^2 v_i(t))^2 \dt \biggr)^{\!\frac12}.
    \end{aligned}
    \end{equation*}
  {Using a Cauchy-Schwarz inequality in space,} this implies
  \begin{equation}
    \label{consTime1}
    \begin{aligned}
      \biggl| \rho_i
      &\sum_{n=1}^N \bigl( \iprod[\Omega_i]{\delta v_i^n}{\phi_i^{h,n}}
      - k \iprod[\Omega_i]{\partial_t v_i(t_n)}{\phi_i^{h,n}} \bigr) \biggr|
      \\[-\jot]
      &\le c k^{\frac32} \sum_{n=1}^N
      \norm[L^2(I_n,\Omega_i)]{\partial_t^2 v_i} \normOi{\phi_i^{h,n}}
      \le c k \norm[L^2({I},\Omega_i)]{\partial_t^2 v_i}
      \biggl( \sum_{n=1}^N k \normOi{\phi_i^{h,n}}^2 \biggr)^{\!\frac12}.
    \end{aligned}
  \end{equation}
  {Due to the regularity assumption~\eqref{regularity_assumption}, we have $v_f^n\in H^{m_f+1}(\Ofh)$, $p^n\in H^{m_f}(\Ofh)$ and $ \delta v_s^n, u^n\in H^{m_s+1}(\Osh)$, such that the jumps of derivatives within the ghost penalties vanish. Hence, we have $S^h(U^n, \Phi^h)=k^{-1}\rho_sg_{v_s}^{h,w}(\delta v_s^n, \phi_s^h)=0$, which concludes the proof.}
\end{proof}

{In order to formulate an estimate for the interpolation error, let} $\bm U := (U^n)_{n=1}^N = (v_f^n, p^n, v_s^n, u^n)_{n=1}^N$
and introduce the semi-norm
\begin{align*}
  |\bm U|_m^2
  &:= \norm[L^2(I,L^2(\Omega_f))]{\partial_t^2 v_f}
  + \norm[L^2(I,L^2(\Omega_s))]{\partial_t^2 v_s}
  + \norm[L^2(I,L^2(\Omega_s))]{\partial_t^2 \nabla u}
  + \norm[\H_s]{u^N}^2
  \\[-1pt]
  &\qquad+ \norm[L^2(I,\H_f)]{\partial_t v_f}^2
    + \norm[L^2(I,\H_s)]{\partial_t v_s}^2
    + \norm[L^2(I,\H_s)]{\partial_t u}^2
  \\[-2pt]
  &\qquad+ k \sum_{n=1}^N \left( \norm[\H_f]{v_f^n}^2
    + \norm[H^{m_f}(\Omega_f)]{p^n}^2
    + \norm[\H_s]{v_s^n}^2
    + \norm[\H_s]{u^n}^2 \right).
\end{align*}

\begin{lemma}[Interpolation error]%
  \label{lem.interpolation}%
  Let $\Phi^{h,n} = (\phi_f^{h,n}, \xi^{h,n}, \phi_s^{h,n}, \psi^{h,n})$ in $\X^h$
  with $\psi^0 = 0$ and
  $\pres = \phi_s^{h,n} - k^{-1} (\psi^{h,n} - \psi^{h,n-1})$.
  Then it holds
  \begin{align*}
    \sum_{n=1}^N k \E_i^n(&U^n, U^{n-1}, \Phi^{h,n}) \le {}
    \ c h^{{m_{fs}}} |\bm U|_m
      \biggl[ \ET(\Phi^{h,N})^2 +\\[-2\jot]
       &\sum_{n=1}^N k \bigl(
      {\normiii{\Phi^{h,n}}^2}
      + {h^2}\normOf{{\nabla}\xi^{h,n}}^2
      + \normOsh{\phi_s^{h,n}}^2 +{ \normOsh{\nabla \psi^{h,n}}^2}
      + \normOsh{\nabla \pres}^2 \bigr)
      \biggr]^{\!\frac12}.
  \end{align*}
\end{lemma}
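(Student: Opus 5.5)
We expand $k\E_i^n = \A^{kh}(\etab_U^n,\etab_U^{n-1};\Phi^{h,n})$ term by term, using \eqref{eq:definition:Akh}:
\[
\A^{kh}(\etab_U^n,\etab_U^{n-1};\Phi^{h,n}) = M^h(\delta\etab_U^n,\Phi^{h,n}) + kA^h(\etab_U^n,\Phi^{h,n}) + kS^h(\etab_U^n,\Phi^{h,n}).
\]
We bound each contribution by $h^{m_{fs}}|\bm U|_m$ times one of the norms in the bracket on the right-hand side. The final bound then follows from a Cauchy--Schwarz inequality in the sum over $n$.

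\textbf{Mass terms.} We write $\delta\etab_{v_i}^n=\int_{I_n}\partial_t(v_i-I_hv_i)\dt$; this is legitimate because $I_h$ and $\pi_h$ commute with $\partial_t$ and the extension satisfies \eqref{extension_time_stability}. Then
\[
\normOi{\delta\etab_{v_i}^n}\lesssim k^{1/2}h^{m_i+1}\norm[L^2(I_n,\H_i)]{\partial_t v_i}.
\]
This gives $\sum_n \rho_i(\delta\etab_{v_i}^n,\phi_i^{h,n}) \lesssim h^{m_i+1}|\bm U|_m(\sum_n k\normOi{\phi_i^{h,n}}^2)^{1/2}$. For the fluid we bound $\normOf{\phi_f^{h,n}}$ by $\normiii{\Phi^{h,n}}$ plus trace terms via a Poincaré-type inequality, using that $\phi_f^h=0$ on $\Gamma_f^D$. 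The ghost term $\rho_s g_{v_s}^{h,w}(\delta\etab_{v_s}^n,\phi_s^{h,n})$ is handled with Cauchy--Schwarz on $g$, an inverse/trace bound $g_{v_s}^{h,w}(\phi,\phi)\lesssim\normOsh{\phi}^2$ for the discrete factor, and trace estimates for jumps of derivatives of the interpolation error.

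\textbf{Bulk, Nitsche and ghost terms of $A^h$, $S^h$.} In $a_f^h$ we estimate $(\sigma_f(\etab_{v_f},\etab_p),\nabla\phi_f)$ by $(h^{m_f}\|v_f\|_{\H_f}+h^{m_f}\|p\|_{H^{m_f}})\normOf{\nabla\phi_f}$. The term $(\div\etab_{v_f},\xi)$ is integrated by parts to $-(\etab_{v_f},\nabla\xi)+(\etab_{v_f}\cdot\n,\xi)_{\Gamma^i}$. This yields $h^{m_f}\norm{v_f}\cdot h\normOf{\nabla\xi}$, which is exactly why $h^2\normOf{\nabla\xi^{h,n}}^2$ appears in the bracket. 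The interface part of this integration by parts combines with the pressure part of the Nitsche term $-(\etab_{v_f}-\etab_{v_s},\sigma_f(\phi_f,-\xi)\n)_{\Gamma^i}$. Nitsche terms involving $\phi_f-\phi_s$ are controlled by $\tracenorm{\phi_f-\phi_s}\le\normiii{\Phi}$ together with \eqref{proj2} and \eqref{interpol1}. Terms with $\nabla\phi_f$ on $\Gamma^i$ are handled via \cref{traceineq} and \cref{lem_stab_gen}. The ghost penalty terms of $S^h$ are bounded by Cauchy--Schwarz, $g(\phi,\phi)^{1/2}\lesssim\normiii{\Phi}$ (or $\normOsh{\nabla\psi}$), and interpolation bounds for jumps, which scale as $h^{m_i}$.

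\textbf{Solid stress term.} We expect $k(\sigma_s(\etab_u^n),\nabla\phi_s^{h,n})$ to be the main obstacle, since $\nabla\phi_s$ is not controlled in the bracket. We substitute $\phi_s^{h,n}=\pres+k^{-1}\delta\psi^{h,n}$. The $\pres$ part is bounded directly by $h^{m_s}\|u^n\|_{\H_s}\normOsh{\nabla\pres}$. For the remainder $\sum_n(\sigma_s(\etab_u^n),\nabla\delta\psi^{h,n})$ we use summation by parts in time with $\psi^{h,0}=0$:
\[
\sum_n(\sigma_s(\etab_u^n),\nabla\delta\psi^{h,n}) = (\sigma_s(\etab_u^N),\nabla\psi^{h,N}) - \sum_{n=1}^{N-1}(\sigma_s(\delta\etab_u^{n+1}),\nabla\psi^{h,n}).
\]
The first term is bounded by $h^{m_s}\|u^N\|_{\H_s}\,\ET(\Phi^{h,N})$. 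For the second we use $\delta\etab_u=\int\partial_t\etab_u$, which gives $h^{m_s}\norm[L^2(I,\H_s)]{\partial_t u}(\sum_n k\normOsh{\nabla\psi^{h,n}}^2)^{1/2}$. The ghost penalty part $k\,2\mu_s g_u^{h,w}(\etab_u^n,\phi_s^{h,n})$ is treated the same way.

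The remaining subtlety is the interface term $(\sigma_f(\etab)\n,\phi_s)_{\Gamma^i}$ paired with $\phi_s$ rather than $\phi_f-\phi_s$. Here we will exploit the projection property \eqref{intproj} of $\pi_h$, following \cite{BURMAN2014FSI}. We subtract patchwise constants on each $\Gamma_j^i$, which produces an extra power of $h$ and lets us bound this term via $\normOsh{\phi_s^{h,n}}$.
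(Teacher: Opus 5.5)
There is a genuine gap, and it sits at the one place where the special projection $\pi_h$ is actually needed. You correctly note that the boundary term $\prodGi{\etab_{v_f}^n\cdot\n_f}{\xi^{h,n}}$ from integrating $\prodOf{\div\etab_{v_f}^n}{\xi^{h,n}}$ by parts meets the pressure part of $-\prodGi{\etab_{v_f}^n-\etab_{v_s}^n}{\sigma_f(\phi_f^{h,n},-\xi^{h,n})\n_f}$. However, you never say what survives. Since $\sigma_f(\phi_f^{h,n},-\xi^{h,n})=2\rho_f\nu_f\symgrad{\phi_f^{h,n}}+\xi^{h,n}\mathbb I$, the $\etab_{v_f}^n$ contributions cancel and the remainder is $\prodGi{\etab_{v_s}^n\cdot\n_f}{\xi^{h,n}}$.

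This remainder cannot be handled by Cauchy--Schwarz plus trace/inverse inequalities. The bracket on the right-hand side gives no $L^2$ control of the pressure test function; it only controls $h\normOf{\nabla\xi^{h,n}}$ and $g_p^{h,w}(\xi^{h,n},\xi^{h,n})^{1/2}$. A direct bound via $\normGi{\xi^{h,n}}\lesssim h^{-1/2}\normOfh{\xi^{h,n}}$ would need $\normOfh{\xi^{h,n}}$, and the pressure is not normalised.

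This is exactly where \eqref{intproj} enters. Because $\etab_{v_s}^n\cdot\n_f$ has zero mean on every $\Gamma_j^i$, you may subtract the patchwise mean $P_0\xi^{h,n}|_{\Gamma_j^i}:=|\Gamma_j^i|^{-1}\int_{\Gamma_j^i}\xi^{h,n}\ds$. Then use $\norm[\Gamma_j^i]{\xi^{h,n}-P_0\xi^{h,n}}\lesssim h\norm[\Gamma_j^i]{\nabla\xi^{h,n}}\lesssim h^{1/2}\norm[P_j]{\nabla\xi^{h,n}}$ together with $\normGi{\etab_{v_s}^n}\lesssim h^{m_s+1/2}\norm[\H_s]{v_s^n}$. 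This gives $h^{m_s}\norm[\H_s]{v_s^n}\,h\normOfh{\nabla\xi^{h,n}}$, which \cref{lem_stab_gen} turns into $h^{m_s}\norm[\H_s]{v_s^n}\bigl(h\normOf{\nabla\xi^{h,n}}+g_p^{h,w}(\xi^{h,n},\xi^{h,n})^{1/2}\bigr)$.

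Your last paragraph instead applies \eqref{intproj} to a term that does not arise, and to which it could not apply anyway. In $j^h$ the stress $\sigma_f(\etab_{v_f}^n,\etab_p^n)\n_f$ only ever appears paired with $\phi_f^{h,n}-\phi_s^{h,n}$. You already bounded that pairing correctly by $\tracenorm{\phi_f^{h,n}-\phi_s^{h,n}}\lesssim\normiii{\Phi^{h,n}}$, so there is nothing to split off. Moreover, \eqref{intproj} is a property of $v_s-\pi_h v_s$ alone; it says nothing about patch means of $\sigma_f(\etab_{v_f}^n,\etab_p^n)\n_f$. Subtracting patchwise constants from $\phi_s^{h,n}$ would also generate $\nabla\phi_s^{h,n}$, which, as you observe yourself, the bracket does not control.

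Apart from this misplacement, the rest of your argument follows the paper's proof. That includes the mass terms via $\int_{I_n}\partial_t\etab\dt$ and Poincaré, and the fluid bulk and Nitsche penalty terms. It also includes the splitting $\phi_s^{h,n}=\pres+k^{-1}\delta\psi^{h,n}$ with summation by parts in time for $\sigma_s$ and $g_u^{h,w}$, and Cauchy--Schwarz for the remaining ghost penalties.
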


\begin{proof}
  We estimate the interpolation error \eqref{interpolError} term by term.
  For the first term we use the fact that
  the temporal derivative commutes with the interpolation operator,
  $\partial_t I_h v_i^n = I_h \partial_t v_i (t_n)$ for $i \in \set{f, s}$:
  \begin{equation}
    \label{interpoldteta}
    \begin{aligned}
      \Bigl| \frac1k
      &\iprod[\Omega_i]{\etab_{v_i}^n - \etab_{v_i}^{n-1}}{\phi_i^{h,n}} \Bigr| \\
      &\le \frac1k
      \normOi{\etab_{v_i}^n - \etab_{v_i}^{n-1}} \normOi{\phi_i^{h,n}}
      = \frac1k \Norm[\Omega_i]{\int_{I_n}
        \partial_t (v_i(t) - I_h v_i(t)) \times 1 \dt} \normOi{\phi_i^{h,n}} \\
      &\le k^{-\frac12}
      \norm[L^2(I_n,\Omega_i)]{\partial_t v_i - I_h\partial_t v_i}
      \normOi{\phi_i^{h,n}}
      \le h^{m_i+1} k^{-\frac12}
      \norm[L^2(I_n,\H_i)]{\partial_t v_i}
      \normOi{\phi_i^{h,n}}.
    \end{aligned}
  \end{equation}
  Summing over $n=1,\dots,N$ and using the Cauchy--Schwarz inequality, we obtain
  \begin{align*}
    \sum_{n=1}^Nk \Bigl| \frac1k
    \iprod[\Omega_i]{\etab_{v_i}^n - \etab_{v_i}^{n-1}}{\phi_i^{h,n}} \Bigr|
    &\le \sum_{n=1}^N  h^{m_i+1}
      \norm[L^2(I_n,\H_i)]{\partial_t v_i} k^{\frac12}
      \normOi{\phi_i^{h,n}} \\[-\jot]
    &\lesssim h^{m_i+1} \norm[L^2(I,\H_i)]{\partial_t v_i}
      \biggl( \sum_{n=1}^N k \normOi{\phi_i^{h,n}}^2 \biggr)^{\!\frac12}.
  \end{align*}
  For the fluid bulk terms, we have by means of standard estimates
  \begin{align*}
    \prodOf{\sigma_f^h(\etab_{v_f}^n,\etab_p^n)}{\nabla \phi_f^{h,n}}
    &\lesssim h^{m_f}
      \left( \normOf{\nabla^{m_f+1} v_f^n} + \normOf{\nabla^{m_f} p^n} \right)
      {\normOf{\nabla \phi_f^{h,n}}},
  \end{align*}
  {and using integration by parts
  \begin{align}
\begin{split}
  \label{firstnf}
\prodOf{\div \etab_{v_f}^n}{\xi^{h,n}}
&= -\prodOf{\etab_{v_f}^n}{\nabla\xi^{h,n}} + (\etab_{v_f}^n,\xi^{h,n}\n_f)_{\Gamma_i}\\
&\lesssim h^{m_f}\normOf{\nabla^{m_f+1} v_f^n}h\normOf{\nabla\xi^{h,n}} +  (\etab_{v_f}^n,\xi^{h,n}\n_f)_{\Gamma_i}.
\end{split}
\end{align} }
  The Nitsche penalty term can be estimated as follows:
  \begin{align*}
    \frac{\rho_f\nu_f\gamma_N}{h}
    &\prodGi{\etab_{v_f}^n - \etab_{v_s}^n}{\phi_f^{h,n} - \phi_s^{h,n}}
      \le \frac{\rho_f\nu_f\gamma_N}{h^{\frac12}}
      \bigl( \normGi{\etab_{v_f}^n} + \normGi{\etab_{v_s}^n} \bigr)
      \tracenorm{\phi_f^{h,n} - \phi_s^{h,n}} \\
    &\ \lesssim \rho_f \nu_f \gamma_N
      \left( h^{m_f} \normOf{\nabla^{m_f+1} v_f^n }
      + h^{m_s} \normOs{\nabla^{m_s+1} v_s^n} \right)
      \tracenorm{\phi_f^{h,n} - \phi_s^{h,n}}.
  \end{align*}
  Similarly, we get for the interface stresses
  \begin{align*}
    -\prodGi{\sigma_f^h (\etab_{v_f}^n&, \etab_p^n) \n_f}
    {\phi_f^{h,n} - \phi_s^{h,n}} \\
    &\lesssim h^{m_f} \left( \normOf{\nabla^{m_f+1} v_f^n}
      + \normOf{\nabla^{m_f} p^n} \right)
      \tracenorm{\phi_f^{h,n} - \phi_s^{h,n}}.
      \end{align*}
{The third interface term can be estimated as follows
      \begin{align*}
    -\prodGi{\etab_{v_f}^n - \etab_{v_s}^n}
    {\sigma_f^h(\phi_f^{h,n}, &-\xi^{h,n}) \n_f}
    \lesssim -\prodGi{\etab_{v_f}^n}
    {\xi^{h,n} \n_f} + \prodGi{\etab_{v_s}^n}{\xi^{h,n} \n_f}\\
    &+\left( h^{m_f}  \normOf{\nabla^{m_f+1} v_f^n}
      + h^{m_s} \normOs{\nabla^{m_s+1} v_s^n} \right)
     \normOf{\nabla \phi_f^{h,n}}.
  \end{align*}
  The first term on the right cancels with the last term in~\eqref{firstnf}. For the last term of the first line, we use that, due to~\eqref{intproj}, we can insert a piecewise constant function $P_0 \xi^{h,n}$ (with respect to the partition $\Gamma_j^i$ of $\Gamma^i$) defined by the mean values
\begin{align*}
P_0\xi^{h,n}|_{\Gamma_j^i} :=|\Gamma_j^{i}|^{-1} \int_{\Gamma_j^i} \xi^{h,n} \ds.
\end{align*}
By standard estimates it holds that
\begin{align*}
\| \xi^{h,n} -P_0\xi^{h,n}\|_{\Gamma_j^i} \lesssim  h\|\nabla \xi^{h,n}\|_{\Gamma_j^i} \lesssim h^{1/2} \|\nabla \xi^{h,n}\|_{P_j},
\end{align*}
and hence
\begin{align*}
\prodGi{\etab_{v_s}^n}{\xi^{h,n} \n_f} &= \prodGi{\etab_{v_s}^n}{(\xi^{h,n}-P_0 \xi^{h,n}) \n_f}\lesssim h^{1/2}\|\etab_{v_s}^n\|_{\Gamma^i} \|\nabla \xi^{h,n}\|_{\Ofh} \\
&\lesssim h^{m_s} \normOs{\nabla^{m_s+1} v_s^n} \left(h\|\nabla \xi^{h,n}\|_{\Omega_f}+g_p^{h,w}(\xi^{h,n},\xi^{h,n})^{1/2}\right).
\end{align*}
}
  For the structure {bulk} terms we have,
  using the symmetry and linearity of $\sigma_s$,
  \begin{equation}
    \label{split42}
    \begin{aligned}
      \sum_{n=1}^N
      &k \prodOs{\sigma_s(\etab_u^n)}{\nabla \phi_s^{h,n}}
      = \sum_{n=1}^N k \prodOs{\sigma_s(\etab_u^n)}{\symgrad{\phi_s^{h,n}}} \\[-\jot]
      &= \sum_{n=1}^N
      \prodOs{\sigma_s(\etab_u^n)}{\symgrad{\delta \psi^{h,n}}}
      + k \prodOs{\sigma_s(\etab_u^n)}
      {\symgrad{\phi_s^{h,n} - k^{-1} \delta \psi^{h,n}}}.
    \end{aligned}
  \end{equation}
  The last part can be estimated by the Cauchy--Schwarz inequality
  and an interpolation estimate.
  Reorganizing the sum and using that $\psi^0 = 0$,
  we obtain for the first term
  \begin{align}
    \label{phipsi}
    \sum_{n=1}^N \prodOs{\sigma_s(\etab_u^n)}{\symgrad{\delta \psi^{h,n}}}
    = \prodOs{\sigma_s(\etab_u^N)}{\symgrad{\psi^{h,N}}}
    - \sum_{n=1}^N
    \prodOs{\sigma_s(\etab_u^n - \etab_u^{n-1})}{\symgrad{\psi^{h,n-1}}}.
  \end{align}
  For the first term on the right-hand side,
  we have using interpolation estimates:
  \begin{align*}
    \prodOs{\sigma_s(\etab_u^N)}{\symgrad{\psi^{h,N}}}
    \le c h^{m_s} \norm[\H_s]{u^{h,N}} \normOs{\symgrad{\psi^{h,N}}}.
  \end{align*}
  {For the sum on the right-hand side of~\eqref{phipsi}, we proceed similarly to~\eqref{interpoldteta} by replacing the discrete time derivative of $\nabla \etab_u$ with a Bochner norm of the continuous time derivative $\partial_t u$:
  \begin{align*}
    \sum_{n=1}^N
    \prodOs{\sigma_s(\etab_u^n - \etab_u^{n-1})}{\symgrad{\psi^{h,n-1}}}
    &\lesssim \sum_{n=1}^N \normOs{\nabla \etab_u^n- \nabla \etab_u^{n-1}}
    \normOs{\nabla \psi^{h,n-1}} \\
    &\lesssim \sum_{n=1}^N \Norm[\Omega_s]{\int_{I_n}
      \partial_t (\nabla u(t) - I_h \nabla u(t)) \dt} 
    \!\normOs{\nabla \psi^{h,n-1}} \\
    &\lesssim \sum_{n=1}^N h^{m_s}
    \norm[L^2(I_n,\H_s)]{\partial_t u}
    k^{\frac12} \normOs{\nabla\psi^{h,n-1}} \\
    &\lesssim h^{m_s} \norm[L^2(I,\H_s)]{\partial_t u}
    \biggl( \sum_{n=1}^N k \normOs{\nabla\psi^{h,n-1}}^2 \biggr)^{\!\frac12}.
  \end{align*}}
  In the last step, we have used the Cauchy--Schwarz inequality.
  Similarly, we estimate the ghost penalty term
  \begin{align*}
    \ghw[v_s]{\etab_{v_s}^n - \etab_{v_s}^{n-1}}{\phi_s^{h,n}}
    &\lesssim \wmax \sumFs \sum_{j=1}^{m_s} \frac{h^{2j+1}}{(j!)^2 }
    \iprod[F]{\jump{\nabla^j (\etab_{v_s}^n - \etab_{v_s}^{n-1})}}
          {\jump{\nabla^j\phi_s^{h,n}}} \\
    &\lesssim \wmax \sum_{K\in\Osh} \sum_{j=1}^{m_s} \frac{h^{2j+1}}{(j!)^2}
     \Norm[\bd K]{\int_{I_n} |\partial_t\nabla^j\etab_{v_s}| \dt}
     \norm[\bd K]{\nabla^j \phi_s^{h,n}} \\
    &\lesssim k^{\frac12} \wmax \sum_{K\in\Osh}
      \sum_{j=1}^{m_s} \frac{h^{2j+1}}{(j!)^2}
     \norm[L^2(I_n,L^2(\bd K))]{\nabla^j \partial_t \etab_{v_s}}
     \norm[\bd K]{\nabla^j \phi_s^{h,n}}\\
     &\lesssim k^{\frac12} \wmax \sum_{K\in\Osh}
     \biggl( \sum_{j=1}^{m_s} h^{j+\frac12}
     \norm[L^2(I_n,L^2(\bd K))]{\nabla^j \partial_t \etab_{v_s}}
     \biggr) \norm[K]{\phi_s^{h,n}}\\
    &\lesssim k^{\frac12} \wmax h^{m_s+1}
    \norm[L^2(I_n,\H_s)]{\partial_t v_s}
    \normOsh{\phi_s^{h,n}},
  \end{align*}
  and hence
  \begin{align*}
    \sum_{n=1}^N \ghw[v_s]{\etab_{v_s}^n - \etab_{v_s}^{n-1}}{\phi_s^{h,n}}
    \lesssim h^{m_s+1} \norm[L^2(I,\H_s)]{\partial_t v_s}
    \biggl( \sum_{n=1}^N k \normOsh{\phi_s^{h,n}}^2 \biggr)^{\!\frac12}.
  \end{align*}
  The next ghost penalty term is split into
  \begin{align}\label{ghostsplit}
    k \sum_{n=1}^N
    \ghw[u]{\etab_u^n}{\phi_s^{h,n}}
    &= \sum_{n=1}^N
    \left( \ghw[u]{\etab_u^n}{\delta \psi^{h,n}} + k
    \ghw[u]{\etab_u^n}{\phi_s^{h,n}- k^{-1} \delta \psi^{h,n}} \right).
  \end{align}
  For the first part on the right-hand side, we obtain as in \eqref{phipsi}
  \begin{align*}
    \sum_{n=1}^N &\ghw[u]{\etab_u^n}{\delta \psi^{h,n}}
    = \ghw[u]{\etab_u^N}{\psi^{h,N}}
    - \sum_{n=1}^N \ghw[u]{\etab_u^n - \etab_u^{n-1}}{\psi^{h,n-1}}
    \\
    &\lesssim \wmax \biggl[
      h^{m_s} \norm[\H_s]{u^N} \normOsh{\nabla\psi^{h,N}}
      + h^{m_s} \norm[L^2(I,\H_s)]{\partial_t u}
      \biggl( \sum_{n=1}^N k \normOsh{\nabla \psi^{h,n-1}}^2 \biggr)^{\!\frac12}
      \biggr].
  \end{align*}
  For the second part on the right-hand side of~\eqref{ghostsplit},
  the Cauchy--Schwarz inequality and an interpolation estimate result in
  \begin{align*}
    k \sum_{n=1}^N
    &\ghw[u]{\etab_u^n}{\phi_s^{h,n} - k^{-1} \delta \psi^{h,n})}
    \lesssim k \sum_{n=1}^N h^{m_s} \norm[\H_s]{u^n} \normOsh{\nabla \pres}.
  \end{align*}
  Finally, we have for the fluid ghost penalty terms
  by means of standard estimates
  \begin{align*}
   \ghw[v_f]{\etab_{v_f}^n}{\phi_f^{h,n}}
    &\lesssim h^{m_f} \normOf{\nabla^{m_f+1} v_f^n} \normOfh{\nabla\phi_f^{h,n}},\\
     \ghw[p]{\etab_{p}^n}{\xi^h}
    &\lesssim h^{m_f} \normOf{\nabla^{m_f} p^n} \ghw[p]{\xi^{h,n}}{\xi^{h,n}}^{\frac12}.
  \end{align*}
  {The statement of the theorem follows by the Cauchy Schwarz inequality to collect all terms in their respective (semi-)norms.}
\end{proof}

\subsection{Final a priori error estimate}

Now, we are ready to prove the final a priori error estimate.

\begin{theorem}%
  \label{theo.energyerror}%
  Let $U^{h, n}$ be the solution
  of \cref{prob:fullydisc} for $m_s \ge 1$, $m_f \ge 2$.
  Let $U(t_n)$ be the continuous solution of \cref{strong_form}
  {satisfying the regularity conditions \eqref{regularity_assumption}}.
  Further, let $h^2 \lesssim k$ and let the assumptions of
  \cref{theo_stab} be satisfied.
  Then, it holds for $\e_U^n, n = 1, \dots, N$:
  \begin{equation}
    \label{energy_est}
    \begin{aligned}
      \ET(\e_U^N)^2 + \sum_{n=1}^N \bigl( k \normiii{\e_U^n}^2
      &+ k{h^2} \normOfh{{\nabla}\e_p^n}^2
      + \ET(\e_U^n - \e_U^{n-1})^2 \bigr) \\[-2\jot]
      &\lesssim \expcT \bigl( k^2 + h^{2{m_{fs}}} \bigr) |\bm U|_m^2,
      \quad\text{where } \e_U^0 := 0.
    \end{aligned}
  \end{equation}
\end{theorem}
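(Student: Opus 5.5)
We split the error as $\e_U^n = \etab_U^n + \xib_{h,U}^n$ and estimate the two parts separately. The interpolation part $\etab_U^n$ is bounded directly by the interpolation estimates \eqref{interpol1} and \cref{lem.proj}, together with the extension bounds \eqref{extension_space_stability}–\eqref{extension_time_stability}. This gives $\ET(\etab_U^N)^2 + \sum_n (k\normiii{\etab_U^n}^2 + kh^2\normOfh{\nabla\etab_p^n}^2 + \ET(\etab_U^n-\etab_U^{n-1})^2) \lesssim h^{2m_{fs}}|\bm U|_m^2$. For the difference terms we write $\etab^n-\etab^{n-1}=\int_{I_n}\partial_t\etab\dt$ and use Cauchy--Schwarz in time, which produces $k\norm[L^2(I_n,\H)]{\partial_t\cdot}^2$. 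For the trace term in $\normiii{\cdot}$ we use the boundary interpolation estimate. The triangle inequality then reduces the theorem to the same bound for the discrete part $\xib_{h,U}^n$.

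For $\xib_{h,U}^n$ we use the Galerkin orthogonality \eqref{Galerkin2}: the sequence $\xib_{h,U}^n\in\X^h$ satisfies \eqref{eq:fullydisc} with the right-hand side $F^n(\Phi^h) := \E_c^n(U^n,U^{n-1};\Phi^h) - \E_i^n(U^n,U^{n-1},\Phi^h)$. We then check \cref{ass:F} for this functional. Summing \cref{lem.consistency} and \cref{lem.interpolation} gives exactly the bracket of \eqref{F:ass}, with $\CF \lesssim (k + h^{m_{fs}})|\bm U|_m$. The consistency bound only involves $\normOf{\phi_f}$ and $\normOs{\phi_s}$, which are dominated by $\normiii{\cdot}$ through Poincaré on $\Gamma_f^D$ and by $\normOsh{\phi_s}$. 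Since $\xib_{h,U}^0=0$, we get $\ET(\xib^0)=\E_g(\xib^0)=0$. \Cref{theo.stabpress}, which applies because $h^2\lesssim k$, then yields the left-hand side of \eqref{energy_est} for $\xib$, bounded by $\expcT[(k^2+h^{2m_{fs}})|\bm U|_m^2 + \sum_n k\normOsh{\nabla \bm r^n}^2]$. Here $\bm r^n := \xi_{v_s}^n - k^{-1}\delta\xi_u^n$ is the residual of the discrete error.

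The main obstacle is bounding this residual term, because $\xib$ does not satisfy $\vres=0$. Since the discrete solution satisfies $v_s^{h,n}=k^{-1}\delta u^{h,n}$, we have
\begin{align*}
\bm r^n = \pi_h v_s(t_n) - k^{-1}\bigl(I_h u(t_n) - I_h u(t_{n-1})\bigr).
\end{align*}
Using $v_s=\partial_t u$ we split this as
\begin{align*}
\bm r^n = \bigl(\pi_h v_s(t_n) - v_s(t_n)\bigr) + k^{-1}\int_{I_n}\bigl(\partial_t u(t_n)-\partial_t u(t)\bigr)\dt + k^{-1}\int_{I_n}(\mathrm{id}-I_h)\partial_t u\dt.
\end{align*}
We take $\nabla$ of each part on $\Osh$. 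The first and last parts give $h^{m_s}$ in $H^1$ by \cref{lem.proj}, the interpolation estimates and the extension; after summing with weight $k$ and using Cauchy--Schwarz in time they contribute $h^{2m_s}$ times norms in $|\bm U|_m$. The middle part is handled by the integration-by-parts argument of \cref{lem.consistency} applied to $\nabla u$, giving $k^{3/2}\norm[L^2(I_n,\Omega_s)]{\partial_t^2\nabla u}$ and hence $k^2$ after summation; this term is exactly why $\partial_t^2\nabla u$ appears in $|\bm U|_m$. Some care is needed because the norm is on $\Osh$ rather than $\Omega_s$, so the time-regularity must be transferred through the extension via \eqref{extension_time_stability}. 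Combining the bounds for $\xib$ and $\etab$ then gives \eqref{energy_est}.
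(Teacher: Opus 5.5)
Your proposal is correct and follows the paper's proof: you split the error as $\etab/\xib$, verify Assumption~\ref{ass:F} for $\E_c^n-\E_i^n$ via \cref{lem.consistency,lem.interpolation} with $\CF\lesssim(k+h^{m_{fs}})|\bm U|_m$, apply \cref{theo.stabpress} with $\xib_{h,U}^0=0$, and bound the residual term by integrating by parts in time. Your three-term split of that residual is slightly more careful than the paper, which writes $I_h$ where the solid velocity actually uses $\pi_h$ and drops the interpolation errors by appealing to $H^1$-stability, whereas you bound them explicitly. The one detail to add is that the final-time terms $h^{2(m_f+1)}\norm[\H_f]{v_f^N}^2$ and $h^{2(m_s+1)}\norm[\H_s]{v_s^N}^2$ arising from $\ET(\etab_U^N)^2$ are not themselves contained in $|\bm U|_m$; they must be absorbed using $h^2\lesssim k$, as the paper does.
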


\begin{proof}
  We divide $\e_U^n$ into
  a discrete part $\xib_{U,h}^n$ and an interpolation part $\etab_U^n$,
  \begin{align*}
    \ET(\e_U^N)^2
    &+ \sum_{n=1}^N \bigl( k \normiii{\e_U^n}^2
    + k {h^2} \normOfh{{\nabla}\e_p^n}^2 + \ET(\e_U^n - \e_U^{n-1})^2 \bigr) \\
    {} = \ET(\xib_{U,h}^N)^2
    &+ \sum_{n=1}^N \bigl( k  \normiii{\xib_{U,h}^n}^2
    + k {h^2} \normOfh{{\nabla}\xib_{p,h}^n}^2
    + \ET(\xib_{U,h}^n - \xib_{U,h}^{n-1})^2 \bigr) \\[-2pt]
    {} + \ET(\etab_U^N)^2
    &+ \sum_{n=1}^N \bigl( k \normiii{\etab_U^n}^2
    + k{h^2} \normOfh{{\nabla}\etab_p^n}^2
    + \ET(\etab_U^n - \etab_U^{n-1})^2 \bigr).
  \end{align*}
  For the interpolation part, we have using~\eqref{interpol1}
  and the stability of the extension
  \begin{align*}
    \label{interpol_E_T}
    \ET(\etab_U^N)^2
    &\lesssim \normOf{\etab_{v_f}^N}^2
    + \normOsh{\etab_{v_s}^N}^2
    + \normOsh{\nabla \etab_u^N}^2 \\
    &\lesssim h^{2(m_f+1)} \norm[\H_f]{v_f^N}^2
    + h^{2(m_s+1)} \norm[\H_s]{v_s^N}^2
    + h^{2m_s} \norm[\H_s]{u^N}^2,
  \end{align*}
  and similarly to the proof of \cref{lem.interpolation} we find
  \begin{align*}
    \normiii{\etab_U^n}^2
    &+ {h^2}\normOfh{{\nabla}\etab_p^n}^2 \\
    &\lesssim \normOfh{\nabla\etab_{v_f}^n}^2
    + \tracenorm{\etab_{v_f}^n - \etab_{v_s}^{n}}^2
    + {h^2}\normOfh{{\nabla}\etab_p^n}^2
    +  \sumFf h^3 \norm[F]{\nabla \etab_p^n}^2 \\
    &\lesssim h^{2m_f} \norm[\H_f]{v_f^n}^2
    + h^{2m_s} \norm[\H_s]{v_s^n}^2
    + h^{2m_f} \norm[H^{m_f}(\Omega_f)]{p^n}^2.
  \end{align*}
  Together, using the linearity of the nodal interpolant, we arrive at
  \begin{equation}
    \label{eq:norms_interpol}
    \begin{aligned}
      \ET&(\etab_{U}^N)^2
      + \sum_{n=1}^N \bigl( k \normiii{\etab_U^n}^2
      + k{h^2} \normOfh{{\nabla}\etab_p^n}^2
      + \ET(\etab_U^n - \etab_U^{n-1})^2 \bigr) \\
      &\lesssim h^{2(m_f+1)}
      \norm[\H_f]{v_f^N}^2
      + h^{2(m_s+1)}\norm[\H_s]{v_s^N}^2
      + h^{2m_s}\norm[\H_s]{u^N}^2 \\
      &+ \sum_{n=1}^N \left( k h^{2 m_f}\norm[\H_f]{v_f^n}^2
      + k h^{2m_s}\norm[\H_s]{v_s^n}^2
      + k h^{2m_f}\norm[H^{m_f}(\Omega_f)]{p^n}^2 \right. \\[-\jot]
      &\kern4em+  \left. h^{2(m_f+1)} \norm[\H_f]{\delta v_f^n}^2
      + h^{2(m_s+1)}\norm[\H_s]{\delta v_s^n}^2
      + h^{2m_s} \norm[\H_s]{\delta u^n}^2 \right).
    \end{aligned}
  \end{equation}
  Under the assumption $h^2\lesssim k$, the first two terms in the second line
  of \cref{eq:norms_interpol} can be absorbed into the third line.
  Additionally, the terms in the last line can be estimated
  similarly to \cref{interpoldteta}
  by replacing the subtraction with a temporal integral
  resulting in $L^2$-Bochner norms of the respective temporal derivatives.
  Together, we obtain
  \begin{equation*}
    \begin{aligned}
      \ET(&\etab_{U}^N)^2
      + \sum_{n=1}^N \bigl( k \normiii{\etab_U^n}^2
      + k {h^2}\normOfh{{\nabla}\etab_p^n}^2
      + \ET(\etab_U^n - \etab_U^{n-1})^2 \bigr) \\
      &\lesssim h^{2(m_f+1)} k \norm[L^2(I,\H_f)]{\partial_t v_f}^2
      + h^{2{(m_s+1)}} k \norm[L^2(I, \H_s)]{\partial_t v_s}^2
      + h^{2{m_s}} k \norm[L^2(I,\H_s)]{\partial_t u}^2 \\[-2pt]
      &\quad+ h^{2m_s} \norm[\H_s]{u^N}^2
      + \sum_{n=1}^N \bigl( k h^{2m_f}\norm[\H_f]{v_f^n}^2
      + k h^{2m_s} \norm[\H_s]{v_s^n}^2
      + k h^{2m_f} \norm[H^{m_f}(\Omega_f)]{p^n}^2 \bigr).
    \end{aligned}
  \end{equation*}
  The discrete part $\xib_{U,h}^n$ satisfies
  the following system of equations
  for all $\Phi^{h,n} = (\phi_f^{h,n}, \xi^{h,n}, \phi_s^{h,n}, \psi^{h,n})$ in $\X^h$,
  $n = 1, \dots, N$, cf.~\eqref{Galerkin2}:
  \begin{equation}
    \label{Galerkin3}
    \sum_{n=1}^N \A^{kh}(\xib_{U,h}^n, \xib_{U,h}^{n-1}; \Phi^{h,n})
    = \sum_{n=1}^N k
    \left( \E_c^n(U^n, U^{n-1}; \Phi^{h,n}) - \E_i^n(U^n, U^{n-1}, \Phi^{h,n}) \right).
  \end{equation}
  We apply the stability result of \cref{theo.stabpress} to \eqref{Galerkin3}.
  Using \cref{lem.consistency,lem.interpolation},
  the right-hand side of \eqref{Galerkin3} is bounded by
  \begin{align*}
    \sum_{n=1}^N k
    \big( \E_c^n(&U^n, U^{n-1}; \Phi^{h,n})
    - \E_i^n(U^n, U^{n-1}, \Phi^{h,n}) \big) \lesssim \CF \biggl[ \ET(\Phi^{h,N})^2 +\\[-2\jot]
     &\sum_{n=1}^N k \bigl(
      \normiii{\Phi^{h,n}}^2
      + {h^2}\normOf{{\nabla}\xi^{h,n}}^2 + \normOsh{\phi_s^{h,n}}^2 + \normOsh{{\nabla\psi^{h,n}}}^2 + \normOsh{{\nabla\pres}}^2 \bigr)
      \biggr]^{\!\frac12},
  \end{align*}
  where $\CF := (h^{{m_{fs}}} + k) |\bm U|_m$.
  Thus, Assumption~\ref{ass:F} holds
  for the right-hand side of \eqref{Galerkin3},
  and
  \cref{theo.stabpress} yields
  \begin{align*}
    \ET&(\xib_{U,h}^N)^2
    + \sum_{n=1}^N \bigl( k \normiii{\xib_{U,h}^n}^2
    + k{h^2} \normOfh{{\nabla}\xib_{p,h}^n}^2
    + \ET(\xib_{U,h}^n - \xib_{U,h}^{n-1})^2 \bigr) \\
    &\lesssim \expcT \biggl[ \ET(\xib_{U,h}^0)^2 + \E_g(\xib_{U,h}^0)^2
    + \CF^2 + \sum_{n=1}^N k
    \normOsh{\nabla(\xib_{v_s}^{h,n} -
      k^{-1} (\xib_{U,h}^n - \xib_{U,h}^{n-1}))}^2 \biggr].
  \end{align*}
  As $\xib_{U,h}^0 = 0$, it remains to estimate
  the last term.
  Due to $\vres = 0$, $v_s^n = \partial_t u(t_n)$
  and the linearity and $H^{1}$-stability
  of interpolation and extension, we have
  \begin{align*}
    \normOsh{\nabla (\xib_{v_s}^{h,n} - k^{-1} (\xib_{U,h}^n - \xib_{U,h}^{n-1}))}
    &= \normOsh{\nabla (I_h E_s\partial_t u(t_n) - k^{-1} I_h E_s \delta u^n)} \\
    &\lesssim k^{-1} \normOs{\nabla (k \partial_t u(t_n) - \delta u^n)}.
  \end{align*}
  We proceed as in \eqref{consdtv} to obtain
  \begin{align*}
    k^{-1} \normOs{\nabla(k \partial_t u(t_n) - \delta u^n)}
    \lesssim c k^{\frac12} \norm[L^2(I_n,L^2(\Omega_s))]{\partial_t^2 \nabla u}.
  \end{align*}
  Taking squares and summing up from $n = 1, \dots, N$, this shows that
  \begin{align*}
    \sum_{n=1}^N k
    \normOsh{\nabla (\xib_{v_s}^{h,n}
    - k^{-1} (\xib_{U,h}^n - \xib_{U,h}^{n-1}))}^2
    \lesssim k^2 \norm[L^2(I, L^2(\Omega_s))]{\partial_t^2 \nabla u}.
  \end{align*}
\end{proof}

\begin{remark}
  The convergence results of \cref{theo.energyerror} are optimal.
  Taking the square root in \eqref{energy_est}
  gives first-order convergence in time,
  which is optimal for backward Euler,
  and convergence order ${m_{fs}}$ in space,
  which is optimal for $Q_{m_{f}}$-$Q_{m_f-1}$ Taylor--Hood elements in the fluid
  and $Q_{m_{s}}$-$Q_{m_s}$ elements in the solid.
\end{remark}

\section{Numerical example: Lid-driven cavity}
\label{sec_tests}

Finally, we present a numerical experiment
to substantiate our theoretical developments.
The implementation is based on the open-source
finite element library deal.II \cite{dealII94},
in particular step-85 of the tutorial program,
and our recent code developed in \cite{FrKnStWeWi25}.
For the linear systems, we use the parallel sparse solver
MUMPS \cite{Amestoy2001MUMPS}.
In all tests, we use {$Q_2$-$Q_1$ Taylor--Hood elements for the fluid
and both $Q_1$-$Q_1$ elements as well as $Q_2$-$Q_2$ elements for the solid,
corresponding to $m_f = 2$ and $m_s \in \set{1,2}$ in the convergence analysis.
As in our previous works \cite{FrKnStWeWi24_ENUMATH,FrKnStWeWi25},
the condition $\vres = 0$ is enforced by adding the relation
\begin{equation*}
  \prodOs{u^{h,n} - kv_s^{h,n}}{\psi^h} = \prodOs{u^{h,n-1}}{\psi^h}
  \quad \forall \psi^h \in \U^h
\end{equation*}
to the discrete formulation \eqref{eq:definition:Akh}.}

\subsection{Geometry and parameters}

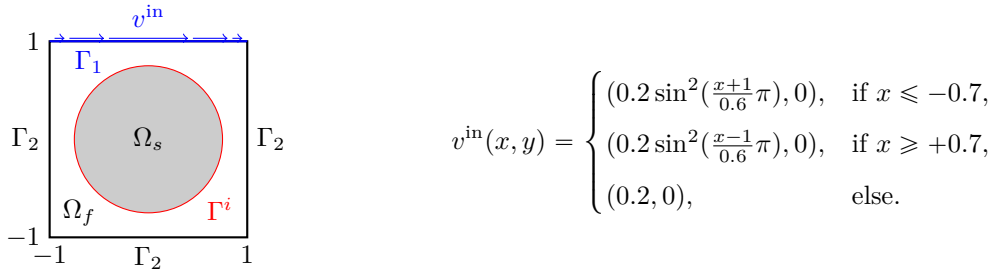
\begin{figure}[tp]
  \centering
  \small
  \begin{tikzpicture}[scale=1.3,baseline=(current bounding box.center)]
    \filldraw[red, fill=black!20, thin] (0,0) circle[radius=0.75];
    \draw[black, thick] (-1,-1) rectangle (-1,-1) rectangle (1,1);
    \draw[blue, thick] (-1,1) -- (1,1);
    \draw[blue, thin, ->] (-0.4,1.03) -- (0.4,1.03);
    \draw[blue, thin, ->] (-0.8,1.03) -- (-0.45,1.03);
    \draw[blue, thin, ->] (-0.95,1.03) -- (-0.85,1.03);
    \draw[blue, thin, ->] (0.45,1.03) -- (0.8,1.03);
    \draw[blue, thin, ->] (0.85,1.03) -- (0.95,1.03);
    \draw (-1,-1) node [below] {$-1$};
    \draw (-1,-1) node [left] {$-1$};
   \draw (0,1.05) node [above] {\textcolor{blue}{$v\tsp{in}$}};
    \draw (1,-1) node [below] {$1$};
    \draw (-1,1) node [left] {$1$};
    \draw (0.5,-0.5) node [below right] {\textcolor{red}{$\Gamma^i$}};
    \node at (-0.7,-0.75) {$\Omega_f$};
    \node at (0,0) {$\Omega_s$};
    \node [below, yshift = -0] at (0,-1) {$\Gamma_2$};
    \node [left, xshift = -0] at (-1,0) {$\Gamma_2$};
    \node [below, yshift =  0] at (-0.6,1)  {\textcolor{blue}{$\Gamma_1$}};
    \node [right, xshift =  0] at (1,0)  {$\Gamma_2$};
  \end{tikzpicture}
  \hfil
  \raisebox{-1ex}{$\displaystyle
    v\tsp{in}(x,y)
    =
    \begin{cases}
      (0.2 \sin^2(\frac{x+1}{0.6}\pi), 0), &\text{if } x \le -0.7, \\
      (0.2 \sin^2(\frac{x-1}{0.6}\pi), 0), &\text{if } x \ge +0.7, \\
      (0.2, 0), &\text{else.}
    \end{cases}
    $}%
  \caption{Configuration of
    lid-driven cavity test case with flow profile
    $v\tsp{in}$ at top boundary.
  \label{fig:lid_driven_cavity}}
\end{figure}

We consider the domain $\Omega = (-1,1)^2$
with boundary $\bd\Omega = \Gamma_1 \cup \Gamma_2$,
partitioned into the subdomains
{
$\Omega_s = \defset{x\in\Omega}{\norm[2]{x}^2 < 0.75}$ and
$\Omega_f = \defset{x\in\Omega}{\norm[2]{x}^2 > 0.75}$, see}
\cref{fig:lid_driven_cavity}.
We perform a convergence study on a uniform mesh
consisting of quadrilateral cells with initial mesh size $h = 0.25$
using a uniform refinement strategy.

At the upper boundary $\Gamma_1$, we impose the inflow profile $v\tsp{in}$
given in \cref{fig:lid_driven_cavity}.
At $\Gamma_2$, we apply homogeneous Dirichlet conditions
to the fluid velocity $v_f$.
We employ zero initial conditions $v_f^0 = v_s^0 = u^0 = 0$
and increase the inflow gradually by setting
$v\tsp{in}(t,x,y) =
    \tfrac12 (1-\cos(\tfrac{t\pi}{2})) v\tsp{in}(x,y)$ for $t < 2$. We set $f_f = f_s = 0$
and use the material parameters
$\nu_f = \SI{0.001}{m^2/s}$, $\rho_f = \rho_s = \SI{1}{kg/m^3}$,
$\mu_s = \SI{5}{mPa}$, and $\lambda_s = \SI{10}{mPa}$, the ghost-penalty parameters
$\gamma_{v_f} = \gamma_{v_s} = \gamma_u = \gamma_p = 10^{-3}$,
the {Nitsche parameter $\gamma_N=10^2$},
and $\wmax = 1$,
corresponding to the common unweighted ghost penalization.

\subsection{Numerical Results}

First we study
convergence under mesh refinement in space, see \cref{tab:err_space}.
The time step is fixed to $k = 1.0$.
We compute the error with regards to the norms estimated in \cref{theo.energyerror}, i.e., $\normOf{v_f(T)}$,
$\normOs{v_s(T)}$,
$\normOs{\nabla u(T)}$,
\def\smb#1{\smash[b]{#1}}%
$\smb{\norm[{I,\Omega_f}]{\nabla v_f}}$ and
$h\smb{\norm[{I,\Omega_f}]{\nabla p}}$,
 where $\norm[{I,X}]{\fcdot}^2$ is given as
$\norm[{I,X}]{\fcdot}^2:= \smash[t]{\sum_{n=1}^N} k \norm[X]{\fcdot}^2$.
The errors are evaluated by comparison against a reference solution
obtained on a finer mesh with $h = 0.0039$.
Convergence orders are estimated numerically
by the errors $e_h$ and $e_{h/2}$
on two consecutive levels with mesh sizes $h$ and $h/2$
by the well-known formula $\alpha = \log_2(e_h / e_{h/2})$.

\begin{table}
	\centering
	\crefformat{theorem}{#2Thm.~#1#3}
	\caption{
		Error norms w.r.t.\ reference solution computed on mesh with $h = 0.0039$
		and orders of \cref{theo.energyerror}. Top: $(m_f,m_s) = (2,1)$. Bottom: $(m_f,m_s) = (2,2)$.
		}
	\label{tab:err_space}
	\sisetup{round-mode=places,round-precision=2}
	\begin{adjustbox}{width=1\textwidth}
		\begin{tabular}{*{12}{>$c<$}}
			\toprule
			&  \multicolumn{2}{c}{$\normOf{v_f(T)}$} & \multicolumn{2}{c}{$\normOs{v_s(T)}$} &  \multicolumn{2}{c}{$\normOs{\nabla u(T)}$}  & \multicolumn{2}{c}{$\norm[{I,\Omega_f}]{\nabla v_f}$} & \multicolumn{2}{c}{$h\norm[{I,\Omega_f}]{\nabla p}$}\\
			\cmidrule(l){2-3} \cmidrule(l){4-5} \cmidrule(l){6-7} \cmidrule(l){8-9} \cmidrule(l){10-11}
			h & \text{error} & \text{order}   & \text{error} & \text{order} & \text{error} & \text{order} & \text{error} & \text{order} & \text{error} & \text{order}\\
			\midrule
			0.2500 & \num{3.14243252e-03} & - & \num{8.13666619e-04} & - & \num{4.35588150e-02} & - & \num{7.15545244e-01} & - & \num{2.27314236e-03} & -\\
			0.1250 & \num{5.81355942e-04} & 2.43 & \num{1.70621901e-04} & 2.25 & \num{2.12200825e-02} & 1.04 & \num{2.96996756e-01} & 1.27 & \num{6.66521046e-04} & 1.77\\
			0.0625 & \num{9.12165556e-05} & 2.67 & \num{3.52184442e-05} & 2.28 & \num{1.07727010e-02} & 0.98 & \num{9.87020506e-02} & 1.59 & \num{1.73216981e-04} & 1.94\\
			0.0312 & \num{1.36732312e-05} & 2.74 & \num{8.31838189e-06} & 2.08 & \num{5.41381349e-03} & 0.99 & \num{2.77392337e-02} & 1.83 & \num{4.64709993e-05} & 1.90\\
			0.0156 & \num{2.18828260e-06} & 2.64 & \num{1.97438011e-06} & 2.07 & \num{2.65684188e-03} & 1.03 & \num{7.18121749e-03} & 1.95 & \num{1.12538228e-05} & 2.05\\
			0.0078 & \num{3.64820973e-07} & 2.58 & \num{4.07454875e-07} & 2.28 & \num{1.19157336e-03} & 1.16 & \num{1.76588410e-03} & 2.02 & \num{2.77973715e-06} & 2.02\\
			\midrule
			\text{\cref{theo.energyerror}} &- & 1.00 &- & 1.00 &- & 1.00 &- & 1.00 &- & 1.00\\
			\midrule
			&  \multicolumn{2}{c}{$\normOf{v_f(T)}$} & \multicolumn{2}{c}{$\normOs{v_s(T)}$} &  \multicolumn{2}{c}{$\normOs{\nabla u(T)}$}  & \multicolumn{2}{c}{$\norm[{I,\Omega_f}]{\nabla v_f}$} & \multicolumn{2}{c}{$h\norm[{I,\Omega_f}]{\nabla p}$}\\
			\cmidrule(l){2-3} \cmidrule(l){4-5} \cmidrule(l){6-7} \cmidrule(l){8-9} \cmidrule(l){10-11}
			h & \text{error} & \text{order}   & \text{error} & \text{order} & \text{error} & \text{order} & \text{error} & \text{order} & \text{error} & \text{order}\\
			\midrule
			0.2500 & \num{3.12844791e-03} & - & \num{6.37506843e-04} & - & \num{1.36013268e-02} & - & \num{7.15562527e-01} & - & \num{9.44915641e-04} & -\\
			0.1250 & \num{5.71871047e-04} & 2.45 & \num{6.74674489e-05} & 3.24 & \num{1.47616419e-03} & 3.20 & \num{2.96989986e-01} & 1.27 & \num{2.00972222e-04} & 2.23\\
			0.0625 & \num{8.74984582e-05} & 2.71 & \num{5.01677253e-06} & 3.75 & \num{2.95903778e-04} & 2.32 & \num{9.87000269e-02} & 1.59 & \num{3.21607368e-05} & 2.64\\
			0.0312 & \num{1.21585177e-05} & 2.85 & \num{3.10459193e-07} & 4.01 & \num{7.12663552e-05} & 2.05 & \num{2.77384328e-02} & 1.83 & \num{6.90789441e-06} & 2.22\\
			0.0156 & \num{1.60712471e-06} & 2.92 & \num{2.37850527e-08} & 3.71 & \num{1.75282686e-05} & 2.02 & \num{7.18083887e-03} & 1.95 & \num{1.55917356e-06} & 2.15\\
			0.0078 & \num{2.11432836e-07} & 2.93 & \num{1.77650602e-09} & 3.74 & \num{4.21502997e-06} & 2.06 & \num{1.76570775e-03} & 2.02 & \num{4.28980661e-07} & 1.86\\
			\midrule
			\text{\cref{theo.energyerror}} &- & 2.00 &- & 2.00 &- & 2.00 &- & 2.00 &- & 2.00\\
			\bottomrule
		\end{tabular}
	\end{adjustbox}
	\crefformat{theorem}{#2Theorem~#1#3}
\end{table}

{We observe that all estimated convergence orders
are --on the finest mesh levels-- at least one for $m_s=1$ and at least two for $m_s=2$ (as shown in \cref{theo.energyerror}), or higher.
The difference between linear and quadratic solid elements is most notable in the norms $\normOs{v_s(T)}$, $\normOs{\nabla u(T)}$ and $h\norm[{I,\Omega_f}]{\nabla p}$, whereas the errors for $\norm[{I,\Omega_f}]{\nabla v_f}$ only change from the fifth decimal place onward.
In the norm $\norm[I, \Omega_f]{\nabla v_f}$, we observe slightly reduced
convergence orders on the coarser meshes, while the optimal order of 2 (for $m_s=2$) is attained on the finer meshes. 
In the $L^2$-norms of $v_f(T)$ and $v_s(T)$, the convergence seems to be one order larger compared to the estimates in Theorem~\ref{theo.energyerror}. Possibly, this can be shown by means of a duality argument, see \cite{LiSunXieYuOptimalOrder, burman2025dynamicritzprojectionfinite}.}

For the convergence analysis in time, see \cref{tab:err_space_time},
we choose a uniform mesh of size $h=0.0625$ with $(m_f,m_s)=(2,2)$ and vary the time step $k$,
while all other parameters remain identical.
We observe that the convergence orders
in all norms are reasonably close to or even slightly above one.

\begin{table}
  \centering
  \crefformat{theorem}{#2Thm.~#1#3}
  \caption{
    Error norms w.r.t\ reference solution computed with time-step $k = 0.0625$
    and orders of \cref{theo.energyerror}.}
  \label{tab:err_space_time}
  \sisetup{round-mode=places,round-precision=2}
  \begin{adjustbox}{width=1\textwidth}
    \begin{tabular}{*{12}{>$c<$}}
      \toprule
      &  \multicolumn{2}{c}{$\normOf{v_f(T)}$} & \multicolumn{2}{c}{$\normOs{v_s(T)}$} &  \multicolumn{2}{c}{$\normOs{\nabla u(T)}$}  & \multicolumn{2}{c}{$\norm[{I,\Omega_f}]{\nabla v_f}$} & \multicolumn{2}{c}{$h\norm[{I,\Omega_f}]{\nabla p}$}\\
      \cmidrule(l){2-3} \cmidrule(l){4-5} \cmidrule(l){6-7} \cmidrule(l){8-9} \cmidrule(l){10-11}
      k & \text{error} & \text{order}   & \text{error} & \text{order} & \text{error} & \text{order} & \text{error} & \text{order} & \text{error} & \text{order}\\
      \midrule
	  1.0000 & \num{1.47716868e-03} & - & \num{1.88031918e-03} & - & \num{2.01691756e-02} & - & \num{1.67252596e-01} & - & \num{1.56911050e-04} & -\\
	  0.5000 & \num{7.91553946e-04} & 0.90 & \num{1.07125920e-03} & 0.81 & \num{1.32361802e-02} & 0.61 & \num{8.32766465e-02} & 1.01 & \num{7.86089161e-05} & 1.00\\
	  0.2500 & \num{3.72332845e-04} & 1.09 & \num{5.49293195e-04} & 0.96 & \num{7.48673515e-03} & 0.82 & \num{3.75312602e-02} & 1.15 & \num{3.63299479e-05} & 1.11\\
	  0.1250 & \num{1.32581126e-04} & 1.49 & \num{2.20211331e-04} & 1.32 & \num{3.09364378e-03} & 1.28 & \num{1.29056815e-02} & 1.54 & \num{1.28208583e-05} & 1.50\\
      \midrule
      \text{\cref{theo.energyerror}} &- & 1.00 &- & 1.00 &- &1.00 &- & 1.00 &- & 1.00\\
      \bottomrule
    \end{tabular}
  \end{adjustbox}
  \crefformat{theorem}{#2Theorem~#1#3}
\end{table}

\section{Conclusion}
{In this work, we established stability and optimal-order a priori error estimates
  for fully Eulerian fluid-structure interaction with a fixed interface.
  While for the temporal discretization
  we employed the standard backward Euler scheme,
  the spatial discretization was based on an unfitted finite element method
  with inf-sup stable element pairs.
  The main novelties compared to previous work include
  a proof of stability
  with respect to the computational domains
  for $Q_r$ finite elements using ghost penalties,
  and --in the context of fluid-structure interactions--
  the consideration of Taylor-Hood finite elements in the fluid and of unfitted finite elements and ghost-penalties
  in the solid domain.
  Under the inverse CFL condition $h^2 \lesssim k$, we could show optimal-order error estimates,
  both in time and space.
  Finally, we presented numerical computations that substantiate the analytical convergence orders.
  Future research might focus on the derivation of $L^2(L^2)$-error estimates for the pressure variable, the analysis of different time-stepping schemes,
  such as backward difference formulas (BDF), as well as time-dependent interfaces. Numerically, the proposed unfitted fully Eulerian formulation could be applied to FSI with large deformations or contact.}

\bibliographystyle{abbrv}

\end{document}